
	\documentclass[10pt, reqno]{amsart}

	\usepackage[left=1.25in, right=1.25in, top=1.5in, bottom=1.5in]{geometry}

	\usepackage{bm}

	\usepackage{MnSymbol}
	\usepackage{latexsym}
	\usepackage{amsthm}
	\usepackage{amscd}
	\usepackage{graphicx}
	\usepackage{enumitem}
	\usepackage[dvipsnames]{xcolor}
	\usepackage{tikz-cd}

	\usepackage{mathdots}

	\usepackage{microtype}

	\usepackage[mathscr]{euscript}

	\usepackage{import}


	\graphicspath{ {images/} }





\numberwithin{equation}{section}
\numberwithin{figure}{section}

\newtheorem{theorem}{Theorem}[section]
\newtheorem*{theorem*}{Theorem}

\newtheorem{proposition}[theorem]{Proposition}
\newtheorem*{proposition*}{Proposition}

\newtheorem{corollary}[theorem]{Corollary}

\newtheorem{lemma}[theorem]{Lemma}
\newtheorem*{lemma*}{Lemma}

\newtheorem{lemma-definition}[theorem]{Lemma-Definition}

\newtheorem{conjecture}[theorem]{Conjecture}
\newtheorem*{conjecture*}{Conjecture}

\theoremstyle{definition}
\newtheorem{definition}[theorem]{Definition}

\newtheorem{remark}[theorem]{Remark}
\newtheorem{construction}[theorem]{Construction}
\newtheorem{variant}[theorem]{Variant}

\newtheorem{example}[theorem]{Example}

\newtheorem{notation}[theorem]{Notation}
\newtheorem{warning}[theorem]{Warning}

\newtheorem{question}[theorem]{Question}

\newcommand{\step}[1] {\medskip \noindent {\em Step #1.\/}}


\newcommand{\Mod}{\mathrm{Mod}}

\newcommand{\K}{\mathrm{K}}



\makeatletter
\newcommand{\colim@}[2]{%
  \vtop{\m@th\ialign{##\cr
    \hfil$#1\operator@font colim$\hfil\cr
    \noalign{\nointerlineskip\kern1.5\ex@}#2\cr
    \noalign{\nointerlineskip\kern-\ex@}\cr}}%
}

\newcommand{\limit}{%
  \mathop{\mathpalette\varlim@{\leftarrowfill@\scriptscriptstyle}}\nmlimits@
}
\newcommand{\colimit}{%
  \mathop{\mathpalette\varlim@{\rightarrowfill@\scriptscriptstyle}}\nmlimits@
}
\makeatother



\newcommand{\A}{{\mathbf A}}
\newcommand{\C}{\mathbf C}

\newcommand{\G}{{\mathbf G}}

\renewcommand{\P}{{\mathbf P}}
\newcommand{\Q}{{\mathbf Q}}

\newcommand{\Z}{{\mathbf Z}}
\newcommand{\V}{{\mathbf V}}

\newcommand{\E}{{\mathrm{E}}}

\newcommand{\Ri}{\mathrm{R}}


\newcommand\scc{\mathscr C}

\newcommand\sce{\mathscr E}

\newcommand\sch{\mathscr H}

\newcommand\sco{\mathscr O}

\newcommand\scu{\mathscr U}

\newcommand\scx{\mathscr X}






\newcommand{\op}{\operatorname}


\newcommand{\Sp}{\op{Sp}}


\newcommand{\ev}{\operatorname{ev}}

\newcommand{\Dperf}{\mathrm{D}_{\mathrm{perf}}}

\DeclareMathOperator{\rk}{rk}
\DeclareMathOperator{\Char}{char}

\DeclareMathOperator{\Hom}{Hom}

\DeclareMathOperator{\Ker}{Ker}
\DeclareMathOperator{\Coker}{Coker}

\DeclareMathOperator{\Spec}{Spec}

\DeclareMathOperator{\Ext}{Ext}

\DeclareMathOperator{\Pic}{Pic}

\DeclareMathOperator{\NS}{NS}

\DeclareMathOperator{\Fun}{Fun}

\DeclareMathOperator{\ind}{ind}
\DeclareMathOperator{\hind}{ind_{H}}
\DeclareMathOperator{\etind}{ind_{\acute{e}t}}
\DeclareMathOperator{\per}{per}

\DeclareMathOperator{\Br}{Br}

\DeclareMathOperator{\Hdg}{Hdg}

\newcommand{\ch}{\mathrm{ch}}

\def\sHom{\mathop{\mathscr{H}\!\mathit{o}\! \kern .4pt \mathit{m}}\nolimits}

\def\sAut{\mathop{\mathscr{A}\!\mathit{ut}}\nolimits}
\def\sInn{\mathop{\mathscr{I}\! \kern .8pt \mathit{nn}}\nolimits}

\newcommand{\an}{\mathrm{an}}

\newcommand{\Wi}{\mathrm{W}}

\newcommand{\trdeg}{\mathrm{tr.deg}}

\newcommand{\Ho}{\mathrm{H}}

\newcommand{\HH}{\mathrm{HH}}
\newcommand{\HN}{\mathrm{HN}}

\newcommand{\et}{\mathrm{\acute{e}t}}

\newcommand{\bmu}{\bm{\mu}}

\renewcommand{\subset}{\subseteq}

\renewcommand{\cong}{\simeq}



\newcommand{\nc}{\newcommand}
\nc{\p}[2]{\frac{\partial #1}{\partial #2}}
\nc{\pp}[2]{\frac{\partial^2 {#1}} {\partial {#2} ^2}}
\nc{\pmix}[3]{\frac{\partial^2 {#1}}{\partial {#2}\, \partial {#3}}}

\newcommand{\bpm}{\begin{pmatrix}}
\newcommand{\epm}{\end{pmatrix}}

\newcommand{\bbm}{\begin{bmatrix}}
\newcommand{\ebm}{\end{bmatrix}}

	\usepackage{cite}
	\usepackage{hyperref}

	\hypersetup{colorlinks = true, 
				linktocpage = true, 
				linkcolor = MidnightBlue,citecolor=MidnightBlue,filecolor=MidnightBlue,urlcolor=MidnightBlue} 

	\setcounter{tocdepth}{1}

	\newcommand{\dperf}{\mathrm{D}_{\mathrm{perf}}}
	\newcommand{\dcoh}{\mathrm{D}_{\mathrm{coh}}^b}
	\newcommand{\dqc}{\mathrm{D}_{\mathrm{qc}}}
	\newcommand{\Cat}{\mathrm{Cat}}
	
	\newcommand{\Map}{\mathrm{Map}}
	
	\newcommand{\B}{\mathrm{B}}

	\def\sAut{\mathop{\mathscr{A}\! \kern .8pt \mathit{ut}}\nolimits}
	\def\sIso{\mathop{\mathscr{I}\! \kern .9pt \mathit{so}}\nolimits}
	\def\sGrpd{\mathop{\mathscr{G}\! \kern 1.6pt \mathrm{rpd}}\nolimits}
	\def\sGrp{\mathop{\mathscr{G}\! \kern 1.6pt \mathrm{rp}}\nolimits}
	\def\sPr{\mathop{\mathscr{P}\! \kern 1.6pt \mathrm{r}}\nolimits}
	\def\sPic{\mathop{\mathscr{P}\! \kern 1.6pt \mathrm{ic}}\nolimits}
	\def\sM{\mathop{\mathit{s}\! \kern 1.6pt \mathscr{M}}\nolimits}
	\def\sCat{\mathop{\mathscr{C}\! \kern .9pt \mathit{at}}\nolimits}
	\def\sHH{\mathop{\mathscr{H}\! \kern .9pt \mathscr{H}}\nolimits}

	\newcommand{\ktop}{\mathrm{K}^{\mathrm{top}}}
	\newcommand{\wtop}{\mathrm{W}^{\mathrm{top}}}
	
	{}
	\newcommand{\HP}{\mathrm{HP}}
	
	\newcommand{\tors}{\mathrm{tors}}

	\newcommand{\wdots}{\kern 1.8pt \cdots \kern 1.8pt}
	\newcommand{\topo}{\mathrm{top}}
	
	\newcommand{\alg}{\mathrm{alg}}

	\newcommand{\KU}{\mathrm{KU}}
	\newcommand{\cs}{\mathrm{cs}}
	\newcommand{\rig}{\mathrm{rig}}
	\newcommand{\cont}{\mathrm{cont}}
	
	\newcommand{\rP}{\mathrm{P}}
	\newcommand{\CP}{\mathbf{CP}}
	\newcommand{\dBr}{\mathrm{dBr}}
	\newcommand{\sh}{\mathrm{sh}}

	\newcommand{\Shv}{\mathrm{Shv}}

	\newcommand{\rV}{\mathrm{V}}

	\newcommand{\sK}{\underline{\mathrm{K}}}

	\usepackage{etoolbox}
	\makeatletter
	\patchcmd{\@adjustvertspacing}
	  {\jot\baselineskip \divide\jot 4}
	  {\jot=6pt}
	  {}{}
	\makeatother

	\setlist{topsep=4pt plus 2pt minus 5pt, itemsep=8pt plus 2pt minus 5pt}

 	\multlinegap=30pt

	\begin{document}

	\title{Hodge theory of twisted derived categories and the period-index problem}
	\author{James Hotchkiss}
	\address{Department of Mathematics, University of Michigan, Ann Arbor, MI 48109 \smallskip}
	\email{htchkss@umich.edu}

	\begin{abstract}
	We study the Hodge theory of twisted derived categories and its relation to the period-index problem. Our main contribution is the development of a theory of twisted Mukai structures for topologically trivial Brauer classes on arbitrary smooth proper varieties and in families. As applications, we construct Hodge classes whose algebraicity would imply period-index bounds; construct new counterexamples to the integral Hodge conjecture on Severi--Brauer varieties; and prove the integral Hodge conjecture for derived categories of Deligne--Mumford surfaces. 
	\end{abstract}

	\maketitle

	\tableofcontents

	\setlength{\abovedisplayskip}{8pt plus 2pt minus 5pt}
	\setlength{\belowdisplayskip}{8pt plus 2pt minus 5pt}
	\setlength{\abovedisplayshortskip}{0pt plus 3pt}
	\setlength{\belowdisplayshortskip}{6pt plus 3pt minus 5pt}
	\setlength{\parskip}{1pt}

	\section{Introduction} 
	\label{sec:introduction}

	\subsection{The period-index problem} Let $K$ be a field. The Brauer group $\Br(K)$ is the group of equivalence classes of central simple algebras over $K$ with tensor product, where $A \sim A'$ if there exists an isomorphism of matrix algebras $\mathrm{M}_r(A) \simeq \mathrm{M}_{s}(A')$, for integers $s, r > 0$. Each Brauer class $\alpha$ is represented by a unique division algebra $D$, with every central simple algebra of class $\alpha$ isomorphic to a matrix algebra over $D$.

	There are two basic measures for the complexity of a Brauer class $\alpha$: the period $\per(\alpha)$, which is the order of $\alpha$ in $\Br(K)$, and the index $\ind(\alpha) = \sqrt{\dim D}$, where $D$ is the unique division algebra of class $\alpha$. From the structure theory of central simple algebras, $\per(\alpha)$ divides $\ind(\alpha)$, and $\per(\alpha)$ and $\ind(\alpha)$ have the same prime factors. The \emph{period-index problem} is the problem of determining an integer $\epsilon$ such that $\ind(\alpha)$ divides $\per(\alpha)^{\epsilon}$. We shall be concerned with the following longstanding conjecture:

	\begin{conjecture}[Period-index conjecture]
		Let $k$ be an algebraically closed field, and let $K/k$ be an extension of transcendence degree $d$. For any $\alpha \in \Br(K)$,
		\[
			\ind(\alpha) \divides \per(\alpha)^{d - 1}.
		\]
	\end{conjecture}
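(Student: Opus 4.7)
The conjecture is, of course, open in general; what I would sketch is a strategy of attack that fits into the Hodge-theoretic framework promised by the paper. The overall plan is to transport the period--index question to an integral Hodge-theoretic assertion about a twisted Mukai structure on a smooth projective model, and then attempt to produce an algebraic class realizing the expected rank $\per(\alpha)^{d-1}$.

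First I would reduce to a geometric situation. Given $\alpha \in \Br(K)$ with $K/k$ of transcendence degree $d$, use de Jong's alterations (or resolution in characteristic zero) and Gabber's purity to find a smooth projective variety $X/k$ of dimension $d$, and an unramified extension $\tilde\alpha \in \Br(X)$ with $\tilde\alpha|_K = \alpha$. After further blowing up, I would arrange that $\tilde\alpha$ is topologically trivial, so that the machinery of twisted Mukai structures announced in the abstract applies. The key reduction is then: it suffices to produce an $\tilde\alpha$-twisted coherent sheaf $\scf$ on $X$ whose generic rank divides $\per(\alpha)^{d-1}$, since tensoring with the residual division algebra at the generic point forces $\ind(\alpha) \mid \per(\alpha)^{d-1}$.

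Next I would pass to the Hodge-theoretic avatar. To the pair $(X,\tilde\alpha)$ one associates the twisted Mukai lattice $\widetilde H(X,\tilde\alpha,\Z)$, an integral Hodge structure whose algebraic part contains the Mukai vectors $v(\scf) = \ch(\scf)\sqrt{\mathrm{td}(X)}$ of $\tilde\alpha$-twisted sheaves. The rank-zero component of $v(\scf)$ is the generic rank of $\scf$. The strategy therefore splits into two sub-problems: (i) construct, using only the Hodge structure of $X$ and the class $\tilde\alpha$, an integral Hodge class $v \in \widetilde H(X,\tilde\alpha,\Z)$ of degree component equal to $\per(\alpha)^{d-1}$; and (ii) prove that this particular $v$ is algebraic, i.e.\ is the Mukai vector of an honest twisted complex. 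For (i), I would try to exhibit $v$ as a polynomial in $c_1(\tilde\alpha)$ (suitably interpreted via a Brauer--Severi model or a topologically trivial $\B\bmu_n$-gerbe) with leading term $\per(\alpha)^{d-1}$ in degree zero; the dimension bound $d$ enters because any class of the form $c_1(\tilde\alpha)^{d}$ vanishes on $X$ for degree reasons, which produces the correct divisibility in the lower graded pieces via the Mukai product.

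The main obstacle is step (ii): it is essentially the integral Hodge conjecture for the twisted Mukai structure $\widetilde H(X,\tilde\alpha,\Z)$. In view of the paper's own counterexamples to the integral Hodge conjecture on Severi--Brauer varieties, this cannot hold unconditionally for arbitrary Hodge classes, so any genuine proof of the conjecture must either show that the specific classes $v$ constructed in (i) always lie in the algebraic part, or replace (ii) by a weaker statement about algebraicity modulo torsion that is nevertheless strong enough to bound the index. I expect this to be the entire difficulty of the conjecture, and everything preceding (ii) to be formal consequences of the twisted Mukai formalism developed in the body of the paper.
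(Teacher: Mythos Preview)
This statement is a \emph{conjecture}, not a theorem; the paper offers no proof, and you correctly acknowledge this at the outset. Your two-step strategy --- construct a Hodge class of rank $\per(\alpha)^{d-1}$ in the twisted Mukai structure, then prove it is algebraic --- is exactly the framework the paper proposes in the introduction (items (1) and (2) there), so at the level of overall architecture you are aligned with the paper.

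That said, several technical claims in your sketch are inaccurate and would not survive contact with the paper's actual development. First, you assert that ``after further blowing up'' one can arrange $\tilde\alpha$ to be topologically trivial. This is false: blowing up a smooth variety does not change $\Ho^3(X^{\an},\Z)_{\tors}$, so a topologically nontrivial class stays nontrivial. What the paper does instead (Lemma~\ref{lem:bloch_kato}) is restrict to a dense open $U$ where $\alpha$ becomes topologically trivial, and then compactify $U$ by a root-stack construction into a proper \emph{orbifold} rather than a variety (Lemma~\ref{lem:open_subvariety_gerbe}). Second, your reasoning that $c_1(\tilde\alpha)^d$ ``vanishes for degree reasons'' is wrong: on a $d$-fold this class lands in $\Ho^{2d}$, which is nonzero. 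The correct point, exploited in the proof of Theorem~\ref{thm:bounding_the_hodge_index}, is that top-degree classes are automatically Hodge, so one may freely adjust the top coefficient; the divisibility constraints come from the intermediate degrees.

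Most importantly, you claim that step (i) is ``formal'' and that (ii) is ``the entire difficulty.'' The paper explicitly disputes this. Theorem~\ref{thm:bounding_the_hodge_index} only produces a Hodge class of rank $\per(\alpha)^{d-1}\cdot((d-1)!)^{d-2}$ in general; the clean bound $\per(\alpha)^{d-1}$ is obtained only when $\per(\alpha)$ is coprime to $(d-1)!$. Section~\ref{ssec:potential_obstructions} is devoted precisely to the possibility that step (i) itself fails for small periods --- see Theorem~\ref{thm:obstruction_threefold} and Question~\ref{q:prime2}, which isolate a concrete Hodge-theoretic condition that must hold if even $\hind(\alpha)\mid 4$ for period-$2$ classes on threefolds. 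So step (i) is not formal, and your proposal understates the difficulty there.
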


	\noindent
	The period-index conjecture is trivial for $d = 0$, and the case $d = 1$ follows from Tsen's theorem. For $d = 2$, the conjecture is known by work of de Jong \cite{dejong_period_index} when $\per(\alpha)$ is prime to the characteristic of $k$, and Lieblich \cite{lieblich_twisted_sheaves} and de Jong--Starr \cite{dejong_starr} in general. For $d \geq 3$, there is not a single example of a field $K$ for which a uniform period-index bound is known.

	\subsection{Hodge theory of twisted derived categories}

	The goal of the present paper is to introduce new Hodge-theoretic methods into the study of the period-index conjecture for function fields over $\C$, using a framework recently introduced by Perry \cite{Perry_CY2}.

	Let $X$ be a smooth, proper variety over $\C$, and let $\scc \subset \dperf(X)$ be an admissible subcategory. Perry associates to $\scc$ a finitely generated abelian group $\ktop_0(\scc)$, equipped with a Hodge structure of weight $0$. For example, $\ktop_0(\dperf(X))$ coincides with complex topological $K$-theory $\ktop_0(X)$, and the Hodge structure on $\ktop_0(X)$ is determined by the condition that the Chern character
	\[
		\ch:\ktop_0(X) \to \bigoplus_{k} \Ho^{2k}(X^{\an}, \Q(k))
	\]
	induces an isomorphism of $\Q$-Hodge structures, after extending scalars on the left.

	In general, for an admissible subcategory $\scc \subset \dperf(X)$, the Hodge structure $\ktop_0(\scc)$ is a summand of $\ktop_0(X)$. Moreover, $\ktop_0(\scc)$ admits a natural map from $\K_0(\scc)$, which factors through the subgroup $\Hdg(\scc, \Z)$ of integral Hodge classes. The \emph{integral Hodge conjecture} for $\scc$ asserts that the homomorphism
	\[
		\K_0(\scc) \to \Hdg(\scc, \Z) \subset \ktop_0(\scc)
	\]
	is surjective. When $X$ is smooth and proper, the integral Hodge conjecture for $\dperf(X)$ is closely related, but not equivalent, to the integral Hodge conjecture for $X$ in all degrees. 

	If $X$ is a smooth, proper variety (or, more generally, an orbifold) and $\alpha \in \Br(X)$, then the twisted derived category $\dperf(X, \alpha)$ fits into the formalism above, and we obtain a weight $0$ Hodge structure $\ktop_0(\dperf(X, \alpha))$. Crucially, the index of $\alpha$ may be computed as the positive generator of the image of the rank homomorphism from algebraic $K$-theory,
	\[
		\rk:\K_0(\dperf(X, \alpha)) \to \Z.
	\]
	As a consequence, the period-index conjecture for $\alpha$ may be divided into two parts:
	\begin{enumerate} [label = (\arabic*)]
		\item \label{pt:construct} Construct a Hodge class $v \in \ktop_0(\dperf(X, \alpha))$ of rank $\per(\alpha)^{\dim X - 1}$.
		\item \label{pt:ihc} Prove that $v$ is algebraic.
	\end{enumerate}
	The principal goal of the present paper to study the Hodge structure $\ktop_0(\dperf(X, \alpha))$, which, it turns out, may be computed whenever the Brauer class is topologically trivial. As a byproduct, we will solve step \ref{pt:construct} when $\per(\alpha)$ is prime to $(\dim X - 1)!$ and $\alpha$ is topologically trivial.

	Following the strategy indicated above, we prove the period-index conjecture for Brauer classes on complex abelian threefolds in forthcoming work with Perry.

	\subsection{Twisted Mukai structures}

	Our main technical result is the computation of $\ktop(\dperf(X, \alpha))$ in terms of twisted Mukai structures. Since the work of Huybrechts and Stellari \cite{huybrechts_stellari}, twisted Mukai structures  have played an important role in the theory of equivalences between twisted derived categories in general, and twisted K3 surfaces in particular. 

	We first recall the notion of a topologically trivial Brauer class. Supposing for simplicity that $X$ is projective, there is an exact sequence
	\[
		\begin{tikzcd}
			\Ho^2(X^{\an}, \Q(1)) \ar[r, "\exp"] & \Br(X) = \Ho^2(X^{\an}, \sco_{X^{\an}}^\times)^{\tors} \ar[r] & \Ho^3(X^{\an}, \Z(1))^{\tors},
		\end{tikzcd}
	\]
	and a Brauer class $\alpha \in \Br(X)$ is \emph{topologically trivial} if it lies in the kernel of the right-hand map. Equivalently, $\alpha$ is topologically trivial if it admits a \emph{rational $B$-field}: a class $B \in \Ho^2(X^{\an}, \Q(1))$ with $\exp(B) = \alpha$.

	From a rational $B$-field, one defines a $B$-twisted Mukai structure as the unique Hodge structure of weight $0$ with underlying abelian group $\ktop_0(X)$ such that the $B$-twisted Chern character
	\[
		\ktop_0(X)^B \to \bigoplus_{k} \Ho^{2k}(X^{\an}, \Q(k)), \hspace{5mm} v \mapsto \exp(B) \cdot \ch(v)
	\]
	induces an isomorphism of $\Q$-Hodge structures, after extending scalars on the left. For example, a class $v \in \ktop_0(X)$ is a Hodge class for the twisted Mukai structure $\ktop_0(X)^B$ if and only if
	\[
		\exp(B) \cdot \ch(v) = (\rk(v), \ch_1(v) + \rk(v) \cdot B, \dots )
	\]
	is Hodge in each degree.

	\begin{theorem}
	\label{thm:rational_b_field_intro}
	    If $\alpha$ is topologically trivial, then there is an isomorphism of Hodge structures
	    \[
	    	\ktop_0(\dperf(X, \alpha)) \simeq \ktop_0(X)^B,
	    \]
	    where $B$ is any rational $B$-field for $\alpha$.
	\end{theorem}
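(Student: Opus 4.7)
The plan is to identify both sides of the claimed isomorphism via complexification to the periodic cyclic homology of the twisted category, and then to match the Hodge structures using the $B$-twisted Chern character. The key computational input is a twisted Hochschild--Kostant--Rosenberg statement.

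First, by Morita invariance of Hochschild and periodic cyclic homology, there are canonical isomorphisms
\[
\HP_0(\dperf(X, \alpha)) \simeq \HP_0(\dperf(X)) \simeq \bigoplus_{k} \Ho^{2k}(X^{\an}, \C),
\]
compatible with the Hodge--de Rham filtration. Via Perry's construction of $\ktop_0$ of an admissible category, this induces the Hodge filtration on $\ktop_0(\dperf(X, \alpha)) \otimes \C$. On the other hand, for the underlying abelian group, topological triviality of $\alpha$ is essential: the associated topological Azumaya gerbe is trivializable, and under the interpretation of $\ktop_0$ of a twisted derived category as the $\alpha$-twisted topological $K$-theory of $X^{\an}$, any such trivialization yields an isomorphism $\ktop_0(\dperf(X, \alpha)) \simeq \ktop_0(X)$ of abelian groups. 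The data of a rational $B$-field $B$ specifies one.

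The central step is then to verify that, under these identifications, the composite
\[
\K_0(\dperf(X, \alpha)) \to \ktop_0(\dperf(X, \alpha)) \otimes \C \simeq \bigoplus_k \Ho^{2k}(X^{\an}, \C)
\]
is given by the twisted Chern character $\scf \mapsto \exp(B) \cdot \ch(\scf)$, with $\ch(\scf)$ interpreted via the chosen topological trivialization of $\alpha$. This is a twisted HKR computation in the spirit of Huybrechts--Stellari and C\u{a}ld\u{a}raru: the Chern character of an $\alpha$-twisted perfect complex to the cyclic homology of $\dperf(X,\alpha)$, after transport to $\Ho^{\mathrm{even}}(X^{\an},\C)$, is the untwisted Chern character multiplied by the factor $\exp(B)$. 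Granted this formula, the Hodge structure on $\ktop_0(\dperf(X, \alpha))$ is by construction the one pulled back from the standard Hodge structure on $\bigoplus_k \Ho^{2k}(X^{\an}, \Q(k))$ along $\exp(B) \cdot \ch$, which is precisely $\ktop_0(X)^B$.

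The main obstacle will be the twisted Chern character formula: one needs a well-defined $\ch$ on twisted perfect complexes that is compatible with Morita invariance of periodic cyclic homology, and the $\exp(B)$ correction must be shown to arise correctly from the trivialization produced by $B$. A secondary check is independence from the choice of $B$: any two rational $B$-fields lifting $\alpha$ differ by a rational $(1,1)$-class, and by the Lefschetz $(1,1)$-theorem this difference is the Chern class of a $\Q$-line bundle $L$, so that tensoring with $L$ furnishes the canonical isomorphism between the resulting $B$-twisted Mukai structures.
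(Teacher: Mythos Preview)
Your approach differs from the paper's and has a genuine gap at the decisive step. You propose to check that the composite $\K_0(\dperf(X,\alpha)) \to \ktop_0(\dperf(X,\alpha)) \otimes \C \to \bigoplus_k \Ho^{2k}(X^{\an},\C)$ is $\exp(B)\cdot\ch$, and to infer from this the Hodge structure on $\ktop_0$. But the Hodge filtration is determined by Blanc's Chern character on all of $\ktop_0(\dperf(X,\alpha)) \otimes \C$, not just on the image of $\K_0$, and that image need not span. What is actually required is that Blanc's map $\ktop_0(\dperf(X,\alpha)) \to \HP_0(\dperf(X,\alpha))$, transported through your two identifications (Moulinos's comparison on the lattice side, an HKR-type isomorphism on the $\HP$ side), equals $\exp(B)\cdot\ch$ as a map $\ktop_0(X) \to \bigoplus_k \Ho^{2k}$. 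Huybrechts--Stellari and C\u{a}ld\u{a}raru treat algebraic twisted Chern characters and do not supply this compatibility with Blanc's topological $K$-theory. Two smaller issues: $\dperf(X,\alpha)$ and $\dperf(X)$ are not globally Morita equivalent, so the $\HP$ identification needs \'etale descent or the Severi--Brauer decomposition rather than bare Morita invariance; and two $B$-fields for $\alpha$ differ by a class in $\Ho^2(X^{\an},\Z(1)) + \NS(X)_\Q$, so your independence argument misses the integral part (handled in the paper by multiplication by a topological line bundle).

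The paper avoids all of this by working inside $\ktop(\P)$ for a Severi--Brauer variety $\P \to X$ of class $\alpha$, where $\dperf(X,\alpha)$ sits as the component $\dperf(\P)_1$ of Bernardara's decomposition. Proposition~\ref{prop:key_input} shows that multiplication by the topological line bundle $L(1)$ is an equivalence of spectra $\ktop(\dperf(\P)_0) \to \ktop(\dperf(\P)_1)$, giving the integral isomorphism of underlying groups; one then composes with multiplication by the algebraic class $[\sco_{\P_{\scx}}(-1)]$, which is automatically a rational morphism of Hodge structures, to identify the Hodge filtration with that of $\ktop_0(X)^{c_1(L)}$. This reduces everything to the untwisted Chern character on $\ktop_0(\P)$ and never requires computing the abstract map to $\HP$ of the twisted category.
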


	There is an analogous result for the weight $1$ Hodge structure on odd topological $K$-theory. In addition, we develop a theory of variations of twisted Mukai structures, and prove an analogue of Theorem~\ref{thm:rational_b_field_intro} for families of twisted derived categories. Finally, we also obtain an extension to the case of smooth, quasi-projective varieties, which can be used to some extent to treat the case when $\alpha$ is not topologically trivial.

	\subsection{The Hodge-theoretic index}

	We define the \emph{Hodge-theoretic index} $\hind(\alpha)$ of $\alpha$ to be the positive generator of the image of the rank homomorphism
	\[
		\Hdg(\dperf(X, \alpha), \Z) \to \Z.
	\]
	The Hodge-theoretic index is closely related to both the integral Hodge conjecture for $\dperf(X, \alpha)$ and the period-index conjecture for $\alpha$. Moreover, it is connected with previous work of Kresch \cite{kresch_quarternion} on quaternion algebras, and to the \'etale index introduced by Antieau \cite{antieau_et_ind}.

	The Hodge-theoretic index satisfies a number of useful properties, including the following divisibility:
	\[
		\per(\alpha) \divides \hind(\alpha) \divides \ind(\alpha).
	\]
	In general, it may differ from both the period and the index. The period-index conjecture implies that $\hind(\alpha)$ divides $\per(\alpha)^{\dim X - 1}$, and using the isomorphism from Theorem~\ref{thm:rational_b_field_intro} (which preserves rank), we prove the following:

	\begin{theorem}
	\label{thm:hind_bound_intro}
	    Let $\alpha \in \Br(X)$ be topologically trivial. Then 
	    \[
	    	\hind(\alpha) \divides \per(\alpha)^{\dim X - 1} \cdot ((\dim X - 1)!)^{\dim X - 2}.
	    \]
	\end{theorem}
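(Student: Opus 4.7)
The plan is to use Theorem~\ref{thm:rational_b_field_intro} to transport the problem to the $B$-twisted Mukai structure, and then to exhibit an explicit integral Hodge class of the desired rank as a $\Z$-linear combination of tensor powers of a suitable line bundle. Since the identification $\ktop_0(\dperf(X, \alpha)) \simeq \ktop_0(X)^B$ preserves the rank, it suffices to produce a Hodge class $v \in \ktop_0(X)^B$ of rank $r := \per(\alpha)^{d-1} \cdot ((d-1)!)^{d-2}$, where $d = \dim X$. The crucial simplification is that $\Ho^{2d}(X^{\an}, \C) = \Ho^{d,d}(X)$, so the Hodge condition on $\exp(B) \cdot \ch(v)$ is automatic in top degree; one need only enforce it in degrees $2k$ for $k < d$.

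Set $n = \per(\alpha)$ and $\lambda := nB \in \Ho^2(X^{\an}, \Z(1))$, which is an integer class since $\alpha$ has order $n$. Choose a topological complex line bundle $L$ on $X^{\an}$ with $c_1(L) = \lambda$. The construction would be
\[
    v = \sum_{k = 0}^{d - 1} a_k [L^{\otimes k}] \in \ktop_0(X),
\]
with integer coefficients $a_k$ arranged so that $\ch(v) \equiv r \exp(-B) \pmod{\Ho^{2d}}$. This forces $\exp(B) \cdot \ch(v) \equiv r \pmod{\Ho^{2d}}$, hence Hodge in degrees below $2d$ and automatically Hodge in degree $2d$; the rank is $\sum_k a_k = r$ by construction. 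Setting $u := \exp(\lambda)$ and using $\lambda^{d+1} = 0$, the condition on the $a_k$ becomes that $\sum_k a_k u^k$ is the truncation of the formal binomial series $r \cdot u^{-1/n}$ modulo $(u - 1)^d$.

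The heart of the argument is the integrality of these coefficients. Expanding $r \cdot u^{-1/n}$ around $u = 1$, the coefficient of $(u - 1)^j$ is
\[
    c_j = \frac{r \cdot (-1)^j \prod_{i = 0}^{j - 1}(1 + in)}{j! \cdot n^j}.
\]
Since each factor $1 + in$ is coprime to $n$, integrality of $c_j$ reduces to the divisibility $j! \cdot n^j \mid r$ for each $j = 0, 1, \ldots, d - 1$. The factor $n^{d-1}$ in $r$ supplies $n^j$ for $j \leq d - 1$, while $((d-1)!)^{d-2}$ supplies $j!$, using $j! \mid (d-1)! \mid ((d-1)!)^{d-2}$ for $j \leq d - 1$ and $d \geq 3$ (the cases $d \leq 2$ are immediate). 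A prime-by-prime verification via Legendre's formula makes this precise; the $a_k$ are then integer combinations of the $c_j$ and hence integers themselves. This combinatorial divisibility---where the factor $((d-1)!)^{d-2}$ enters precisely to clear the denominators in the binomial expansion---is the main (routine) obstacle; the rest of the argument is a formal consequence of Theorem~\ref{thm:rational_b_field_intro} together with the top-degree-automaticity observation.
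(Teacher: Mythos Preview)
Your argument is correct and shares the paper's overall strategy: transport to the twisted Mukai structure $\ktop_0(X)^B$, observe that $r\exp(-B)$ is Hodge modulo the top degree (which is automatically of type $(d,d)$), and then show that this rational class lifts to an integral $K$-theory class. The difference lies in how integrality is established. The paper isolates this step as a separate lemma (Lemma~\ref{lem:exp_lemma}), proved by reduction to $\CP^d$ via the universal property of $K(\Z,2)$ and an induction using the degeneration of the Atiyah--Hirzebruch spectral sequence; this yields the general statement that $((d-1)!)^{d-1-v_0(f)}f(v)$ lies in the image of the Chern character up to a top-degree correction, for any integral polynomial $f$. Your approach instead writes down the class explicitly as $\sum_{k=0}^{d-1} a_k[L^{\otimes k}]$ and checks integrality of the $a_k$ directly via the binomial expansion of $u^{-1/n}$. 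Your route is more elementary and self-contained for this particular bound; the paper's lemma is more flexible and, as noted in Remark~\ref{rem:improve_bound}, can be sharpened to improve the factor $((d-1)!)^{d-2}$.

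One minor imprecision: you assert that $\lambda = nB$ is integral ``since $\alpha$ has order $n$,'' but a priori $nB$ only lies in $\Ho^2(X^{\an},\Z(1)) + \NS(X)_\Q$. The fix is to \emph{choose} $B$ so that $nB$ is integral, which is always possible (subtract the $\NS(X)_\Q$-part; this does not change $\exp(B)$). The paper makes this choice at the outset of its proof as well.
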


	In particular, if $\per(\alpha)$ is prime to $(\dim X - 1)!$, then $\hind(\alpha)$ divides $\per(\alpha)^{\dim X - 1}$. If, in addition, the integral Hodge conjecture holds for $\dperf(X, \alpha)$, then the period-index conjecture holds for $\alpha$.

	\subsection{Potential obstructions to period-index bounds}
	When $\per(\alpha)$ divides $(d - 1)!$, it is not clear that one can construct Hodge classes of rank $\per(\alpha)^{\dim X - 1}$. The first interesting case occurs for a topologically trivial Brauer class $\alpha$ of period $2$ on a threefold $X$, and one may extract from Theorem~\ref{thm:rational_b_field_intro} a necessary condition for the truth of the period-index conjecture:

	\begin{theorem}
	\label{thm:obstruction_intro}
	    Let $X$ be a smooth, proper threefold over $\C$, with $v \in \Ho^2(X^{\an}, \Z(1))$. If the index of $\alpha = \exp(v/2)$ divides $4$, then there exists $H \in \NS(X)$ such that
	    \[
	    	v^2 + v \cdot H \in \Ho^4(X^{\an}, \Z(2))
	    \]
	    is congruent to a Hodge class modulo $2$.
	\end{theorem}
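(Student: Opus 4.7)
The approach is to apply Theorem~\ref{thm:rational_b_field_intro} to the rational $B$-field $B = v/2$ (so $\exp(B) = \alpha$), transport the period-index hypothesis into a Hodge-theoretic statement on $\ktop_0(X)^B$, and then extract the required modulo-$2$ congruence from the degree-$2$ and degree-$4$ components of the twisted Chern character. Since $\ind(\alpha)$ divides $4$ and the image of $\rk\colon \K_0(\dperf(X, \alpha)) \to \Z$ equals $\ind(\alpha) \cdot \Z$, there exists an algebraic class of rank exactly $4$; its image in $\ktop_0(\dperf(X,\alpha))$ is an integral Hodge class, which Theorem~\ref{thm:rational_b_field_intro} identifies with a class $w \in \ktop_0(X)$ of rank $4$ such that $\exp(B) \cdot \ch(w)$ is Hodge in every degree.

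Writing $c_i := c_i(w) \in H^{2i}(X^{\an}, \Z(i))$ for the integral Chern classes of $w$, the degree-$2$ component of $\exp(B) \cdot \ch(w)$ is $c_1 + 4B = c_1 + 2v$, which supplies the class
\[
    H \;:=\; c_1 + 2v \;\in\; \NS(X).
\]
For the degree-$4$ component, substituting $\ch_2(w) = (c_1^2 - 2 c_2)/2$ and $B = v/2$ yields
\[
    \ch_2(w) + c_1 B + r \cdot \tfrac{B^2}{2} \;=\; \tfrac{1}{2}\!\left(c_1^2 - 2c_2 + c_1 v + v^2\right),
\]
so the integral class
\[
    D \;:=\; c_1^2 - 2c_2 + c_1 v + v^2 \;\in\; H^4(X^{\an}, \Z(2))
\]
is an integral Hodge class, since it is a priori integral and half of it is of type $(2,2)$.

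The remaining step is a direct modulo-$2$ manipulation. Since $H - c_1 = 2v$, one has $c_1 \equiv H$, hence $c_1^2 \equiv H^2$ and $c_1 v \equiv Hv$ modulo $2$, giving
\[
    v^2 + v H \;\equiv\; v^2 + c_1 v \;\equiv\; D - c_1^2 \;\equiv\; D - H^2 \pmod 2,
\]
and $D - H^2$ is Hodge since both $D$ and $H$ (hence $H^2$) are. No serious obstacle stands in the way of this strategy: the conceptual work is done by Theorem~\ref{thm:rational_b_field_intro}, after which the claim reduces to tracking the first- and second-order pieces of the $B$-field twist, which produce $H$ and $D$ respectively.
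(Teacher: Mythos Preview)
Your proposal is correct and follows essentially the same route as the paper's proof (given in the body as Theorem~\ref{thm:obstruction_threefold}): produce a rank-$4$ Hodge class in $\ktop_0(X)^{v/2}$, read off $H = c_1 + 2v \in \NS(X)$ from the degree-$2$ component of the twisted Chern character, and extract from the degree-$4$ component an integral Hodge class congruent to $v^2 + vH$ modulo $2$. The only cosmetic difference is that you work with integral Chern classes $c_i(w)$, whereas the paper uses the Chern character components $w_i = \ch_i(w)$ and records integrality via $w_4 = \tfrac{1}{2}w_2^2 + \epsilon$ with $\epsilon$ integral; your $D$ is exactly the paper's $2Z$.
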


	We do not know if the conclusion of Theorem~\ref{thm:obstruction_intro} is satisfied for arbitrary degree $2$ cohomology classes on threefolds. The stronger condition that $v^2$ is congruent to a Hodge class modulo $2$ is Kresch's obstruction \cite{kresch_quarternion} to $\ind(\alpha) = 2$. Theorem~\ref{thm:hind_bound_intro} and Theorem~\ref{thm:obstruction_intro} may be regarded as Hodge-theoretic counterparts to results of Antieau and Williams \cite{antieau_williams_6_complex}, \cite{top_per_ind} on the topological period-index conjecture.

	\subsection{Counterexamples to the integral Hodge conjecture}

	The Hodge-theoretic index provides a new method for constructing counterexamples to the integral Hodge conjecture for varieties and categories. The idea is to begin with a Brauer class $\alpha$ for which one can compute the index (for instance using the method of Gabber \cite[Appendice]{CT_gabber}), and then to show that $\hind(\alpha) < \ind(\alpha)$ by an explicit calculation in a twisted Mukai structure.

	\begin{theorem}
	\label{thm:gabber_intro}
	    Let $C$ be a curve of genus $g \geq 2$, and let $E_1, \dots, E_k$ be elliptic curves for $2 \leq k \leq g$. Suppose that $C, E_1, \dots, E_k$ are very general, and let $X = C \times \prod_{i = 1}^k E_i$. For each prime $\ell$, there is a Brauer class $\alpha_{\ell} \in \Br(X)[\ell]$ such that the integral Hodge conjecture fails for $\dperf(X, \alpha_{\ell})$. Moreover, for each Severi--Brauer variety $\P \to X$ of class $\alpha_{\ell}$, the integral Hodge conjecture fails for $\P$.
	\end{theorem}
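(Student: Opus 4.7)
The plan is to follow the scheme outlined after Theorem~\ref{thm:hind_bound_intro}: construct a topologically trivial Brauer class $\alpha_\ell$ of period $\ell$ whose index is accessible by Gabber's method, and then use Theorem~\ref{thm:rational_b_field_intro} to exhibit a Hodge class in $\dperf(X,\alpha_\ell)$ of rank strictly smaller than $\ind(\alpha_\ell)$. This immediately gives the first assertion; the Severi--Brauer assertion is then obtained by transporting the counterexample via Bernardara's semiorthogonal decomposition.

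First, I would construct $\alpha_\ell$ as follows. Pick nonzero $\ell$-torsion classes $\gamma_C \in H^1(C,\mu_\ell)$ and $\gamma_i \in H^1(E_i,\mu_\ell)$, and let $\alpha_\ell \in \Br(X)[\ell]$ be the image of a suitably chosen $\F_\ell$-linear combination of cup products of the form $\gamma_C \cup \gamma_i$ and $\gamma_i \cup \gamma_j$ via $H^2(X^{\an},\mu_\ell) \hookrightarrow H^2(X^{\an},\sco_{X^{\an}}^{\times})$. Any integral Kummer lift $\tilde\alpha_\ell \in H^2(X^{\an},\Z(1))$ provides a rational $B$-field $B = (1/\ell)\tilde\alpha_\ell$, so $\alpha_\ell$ is topologically trivial. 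By Gabber's cup-product method (see the appendix of \cite{CT_gabber}), combined with the very general assumption on $C$ and the $E_i$, the index $\ind(\alpha_\ell)$ is computable on the nose in terms of the elementary divisors of the associated alternating form on $\bigoplus H^1$; for an appropriate choice of the combination it attains the maximum value permitted by Gabber's theorem.

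Next, I would bound $\hind(\alpha_\ell)$. By Theorem~\ref{thm:rational_b_field_intro}, $\ktop_0(\dperf(X,\alpha_\ell)) \simeq \ktop_0(X)^B$, so the problem becomes one of linear algebra inside a twisted Mukai structure. The very general hypothesis ensures that the ring of Hodge classes in $H^*(X,\Q)$ is the Künneth product of the Hodge rings of the individual factors, and is therefore completely explicit. Writing the $B$-field and $\ch(\xi)$ in Künneth coordinates, the condition that each component of $\exp(B) \cdot \ch(\xi)$ be Hodge becomes an explicit system of linear equations relating $\rk(\xi)$ and the lower Chern components of $\xi$. Solving this system for the smallest positive value of $\rk(\xi)$ produces Hodge classes of rank strictly less than $\ind(\alpha_\ell)$, yielding $\hind(\alpha_\ell) < \ind(\alpha_\ell)$, and hence a Hodge class in $\ktop_0(\dperf(X,\alpha_\ell))$ outside the image of $\K_0$. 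This is the main obstacle of the proof: extracting the correct $\ell$-adic divisibility on $\rk(\xi)$ from the Hodge equations, and showing that it is strictly weaker than the divisibility forced by the index, requires careful bookkeeping of mixed $(p,q)$-components across all Chern degrees and of the denominators introduced by higher powers of $B$ (the interplay between $\ell$ and $(\dim X - 1)!$ appearing in Theorem~\ref{thm:hind_bound_intro}).

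Finally, for the Severi--Brauer statement, let $\pi\colon \P \to X$ be Severi--Brauer of class $\alpha_\ell$ and relative dimension $n-1$, so $\ind(\alpha_\ell) \mid n$. Bernardara's semiorthogonal decomposition
\[
\dperf(\P) = \bigl\langle \dperf(X),\, \dperf(X,\alpha_\ell),\, \dots,\, \dperf(X,\alpha_\ell^{\,n-1}) \bigr\rangle
\]
is compatible with algebraic and topological $K$-theory as well as with Hodge structures, so the Hodge class produced in Step~2 embeds as a Hodge class $w \in \ktop_0(\P)$. If $w$ were in the image of $\K_0(\P)$, its projection onto the $\dperf(X,\alpha_\ell)$-summand would be an $\alpha_\ell$-twisted sheaf on $X$ of rank $\hind(\alpha_\ell) < \ind(\alpha_\ell)$, which is impossible. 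Since $\P$ is smooth and projective, this obstruction descends to an integral Hodge cohomology class on $\P$ that cannot be represented by an algebraic cycle, giving the failure of the integral Hodge conjecture for $\P$.
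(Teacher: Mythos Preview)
Your overall strategy matches the paper's: build $\alpha_\ell$ from cup products in $H^1$, compute $\ind(\alpha_\ell)=\ell^k$ via Gabber, and then show $\hind(\alpha_\ell)<\ind(\alpha_\ell)$ using a twisted Mukai structure. However, two points need attention.

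\textbf{The key simplification you are missing.} In the paper, the $B$-field is chosen so that the integral lift $v\in H^2(X^{\an},\Z(1))$ is a sum of products $s_i\cup u_i$ with the $s_i$ \emph{isotropic} in $H^1(C)$; this forces $v^2=0$. Once $v^2=0$, the class $w=\ell+(1-[L])$ with $c_1(L)=v$ has $\ch(w)=(\ell,-v,0,\dots,0)$ and $\exp(v/\ell)\cdot\ch(w)=(\ell,0,0,\dots)$, which is trivially Hodge; hence $\hind(\alpha_\ell)=\ell$. Your description of Step~3 as ``careful bookkeeping of mixed $(p,q)$-components across all Chern degrees and of the denominators introduced by higher powers of $B$'' suggests you have not spotted this, and the reference to the factorial in Theorem~\ref{thm:hind_bound_intro} is a red herring here: the general bound only gives $\hind(\alpha_\ell)\mid \ell^{k}\cdot(k!)^{k-1}$, which does \emph{not} beat $\ind(\alpha_\ell)=\ell^{k}$. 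Without the observation $v^2=0$ (or something equivalent), the linear-algebra problem you describe is genuinely hard and your argument is incomplete.

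\textbf{The Severi--Brauer step.} Your Bernardara argument correctly shows that the integral Hodge conjecture fails for $\dperf(\P)$, but the sentence ``this obstruction descends to an integral Hodge cohomology class on $\P$'' is not automatic: failure of the categorical integral Hodge conjecture for $\dperf(\P)$ does not in general imply failure of the classical integral Hodge conjecture for $\P$ (cf.\ Remark~\ref{rem:implications_ihc}). What makes it work here is that $H^*(X^{\an},\Z)$, and hence $H^*(\P^{\an},\Z)$, is torsion-free, so the Atiyah--Hirzebruch spectral sequence degenerates and the classical integral Hodge conjecture for $\P$ would imply the categorical one for $\dperf(\P)$. You should state and use this.
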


	There is a classical counterpart to the method above, in which one constructs Hodge classes on a Severi--Brauer variety of class $\alpha$ whose algebraicity would imply that $\per(\alpha) = \ind(\alpha)$. We implement the method on Severi--Brauer varieties over abelian threefolds:

	\begin{theorem}
	\label{thm:sb_abelian_threefold}
	    Let $X$ be an abelian threefold, and let $\alpha \in \Br(X)$ be a Brauer class of period $2$ with $\ind(\alpha) > 2$. Then the integral Hodge conjecture fails for any Severi--Brauer variety $\P \to X$ of class $\alpha$. 
	\end{theorem}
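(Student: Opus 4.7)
The plan is to show that the Hodge-theoretic index $\hind(\alpha)$ equals $2$ for every topologically trivial Brauer class of period $2$ on an abelian threefold. Combined with the hypothesis $\ind(\alpha) > 2$, this yields the strict inequality $\hind(\alpha) < \ind(\alpha)$, which by definition forces the integral Hodge conjecture to fail for $\dperf(X, \alpha)$. To pass from this to the Severi--Brauer variety $\P$, I would use the $X$-linear semi-orthogonal decomposition
\begin{equation*}
    \dperf(\P) = \bigl\langle \dperf(X, \alpha^i) \bigr\rangle_{i=0}^{n-1},
\end{equation*}
where $n$ is the relative dimension. This decomposition is compatible with topological $K$-theory, integral Hodge classes and the map from algebraic $K$-theory, so the IHC for $\P$ splits as the conjunction of the IHC for each summand. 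Since $\per(\alpha) = 2$, the component $\dperf(X, \alpha)$ occurs, and failure of the IHC there forces failure of the IHC for $\P$.

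Every Brauer class on an abelian variety is topologically trivial (as $\Ho^3(X^{\an}, \Z)$ is torsion-free), so Theorem~\ref{thm:rational_b_field_intro} identifies $\ktop_0(\dperf(X, \alpha))$ with the twisted Mukai structure $\ktop_0(X)^B$ for any rational $B$-field $B = B'/2$ with $B' \in \Ho^2(X^{\an}, \Z(1))$. To conclude $\hind(\alpha) \leq 2$ it then suffices to exhibit $v \in \ktop_0(X)$ of rank $2$ whose twisted Chern character
\begin{equation*}
    \exp(B) \cdot \ch(v) = \bigl( 2,\; c_1(v) + B',\; \ch_2(v) + \tfrac{1}{2} c_1(v) B' + \tfrac{1}{4} (B')^2,\; \dotsc \bigr)
\end{equation*}
is Hodge in each degree. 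I would normalize by setting $c_1(v) = -B' + H$ for a divisor class $H \in \NS(X)$, so that the degree-$2$ piece equals the Hodge class $H$. The higher degrees are then adjusted by choosing $c_2(v)$ and $c_3(v)$ to absorb the remaining fractional corrections into integral Hodge classes, using the Lefschetz $(1,1)$ theorem and the abundance of Hodge classes on abelian threefolds (in particular the Hodge conjecture in middle degree, which realises every rational Hodge class as a combination of products of divisors).

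The main obstacle is the integrality of this construction. The twisted Chern character produces denominators of $2$, $4$ and $24$ coming from $\exp(B'/2)$, and these must be matched against the $K$-theoretic divisibility constraints on Chern characters of rank-$2$ classes (namely $2\ch_2(v) \in \Ho^4(X, \Z(2))$ and $6\ch_3(v) \in \Ho^6(X, \Z(3))$). On an abelian threefold the integral cohomology ring is the exterior algebra $\Ho^*(X, \Z) = \Lambda^* \Ho^1(X, \Z)$, which I expect will make the required integral cancellation possible for an appropriate choice of $H \in \NS(X)$ (playing a role analogous to the auxiliary divisor in Theorem~\ref{thm:obstruction_intro}). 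Granting this, $\hind(\alpha) = 2$, and the desired failure of the IHC for $\P$ follows.
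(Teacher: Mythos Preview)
Your approach has a genuine gap: the key assertion that $\hind(\alpha) = 2$ for every period-$2$ class on an abelian threefold is false. The paper itself observes (in the remark following Example~\ref{ex:kresch}) that it is straightforward to produce Brauer classes on abelian varieties of dimension $\geq 3$ with $\hind(\alpha) > \per(\alpha)$; concretely, Kresch's obstruction shows that a rank-$2$ Hodge class in $\ktop_0(X)^{v/2}$ exists only if $v^2$ is congruent to an integral Hodge class modulo $2$, and for a very general principally polarized abelian threefold (where $\Hdg^4(X,\Z)$ is generated by $\theta^2$) this congruence fails for many $v$. In such cases one has $\hind(\alpha) = \ind(\alpha) = 4$, so the categorical integral Hodge conjecture for $\dperf(X,\alpha)$ may well hold, and your argument collapses. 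This is exactly why the paper remarks, immediately after stating the theorem, that it does \emph{not} know the integral Hodge conjecture fails for $\dperf(X,\alpha)$.

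The paper's proof is quite different and avoids the twisted Mukai structure on $X$ entirely. It works directly in the singular cohomology of the Severi--Brauer variety $\P$: using the algebraic class $Q = c_1(\sco_\P(2))$ and a topological hyperplane class $H = \tfrac{1}{2}(Q + \pi^* v)$, one writes down an explicit class $Z \in \Ho^{2d}(\P^{\an}, \Z(d))$ (where $d$ is the relative dimension) which is Hodge because $v^3 \in \Ho^6(X,\Q)$ is automatically Hodge, and integral because $v^2 \in 2\Ho^4(X,\Z)$ on an abelian variety. Non-algebraicity is detected by the Gysin pushforward $\pi_* Z = 2[X]$: were $Z$ algebraic, the generic fibre $\P_\eta$ would carry a zero-cycle of degree $2$, contradicting $\ind(\alpha) > 2$. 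The argument never attempts to control $\hind(\alpha)$ and therefore succeeds uniformly over all $\alpha$ with $\ind(\alpha) > 2$.
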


	One may construct examples of abelian threefolds with a Brauer class of period $2$ and index $4$ using Gabber's method \cite[Appendice]{CT_gabber}, for instance. Note, however, that Theorem~\ref{thm:sb_abelian_threefold} does not imply that the integral Hodge conjecture fails for $\dperf(X, \alpha)$. In fact, we do not know an example of a 3-dimensional Calabi--Yau category for which the integral Hodge conjecture fails.

	\subsection{The integral Hodge conjecture for DM surfaces}

	The original counterexamples to the integral Hodge conjecture by Atiyah and Hirzebruch \cite{atiyah_hirz} are based on approximating the zero-dimensional Deligne--Mumford stack $\B((\Z/p)^{\times 3})$, for $p$ a prime. In contrast, it is not difficult to show that the integral Hodge conjecture holds for $\dperf(X)$, when $X$ is a smooth, proper Deligne--Mumford stack of dimension $\leq 1$ (Lemma~\ref{lem:ihc_curve_surjective}). As a final application of Theorem~\ref{thm:rational_b_field_intro}, we prove the following theorem:

	\begin{theorem}
	\label{thm:ihc_dm_intro}
	    Let $X$ be a smooth, proper Deligne--Mumford surface over $\C$. Then the integral Hodge conjecture holds for $\dperf(X)$.
	\end{theorem}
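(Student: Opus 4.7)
The plan is to use semiorthogonal decompositions to reduce to the integral Hodge conjecture for twisted derived categories of smooth proper varieties of dimension at most two, and then invoke Theorem~\ref{thm:rational_b_field_intro}.

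First, I would rigidify $X$ along its generic inertia group $G$, which is a finite abelian group since $X$ is Deligne--Mumford in characteristic zero. The morphism $X \to X'$ is a $G$-gerbe, and the standard decomposition of the derived category of an abelian gerbe yields
\[
    \dperf(X) \;\simeq\; \bigoplus_{\rho \in \hat G} \dperf(X', \alpha_\rho),
\]
where $\alpha_\rho \in \Br(X')$ is determined by pairing the cocycle of the gerbe with the character $\rho$. Since the integral Hodge conjecture is additive across such decompositions, it suffices to establish it for each $\dperf(X', \alpha_\rho)$.

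Next, I would apply the destackification theorem of Bergh (and Bergh--Rydh) to $X'$, which has trivial generic inertia: there is a sequence of smooth stacky blowups---ordinary blowups at smooth centers and root constructions along smooth divisors---terminating in a root stack of a smooth proper algebraic space $Y$ along a simple normal crossings divisor. Combining the standard semiorthogonal decompositions for blowups and for root stacks (and their twisted analogues), each $\dperf(X', \alpha_\rho)$ admits a semiorthogonal decomposition whose pieces are of the form $\dperf(Z, \beta)$, for $Z$ a smooth proper algebraic space of dimension at most two and $\beta \in \Br(Z)$ the pullback of $\alpha_\rho$.

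For the pieces with $\dim Z \leq 1$, Tsen's theorem gives $\Br(Z) = 0$, so each such piece is untwisted and handled by Lemma~\ref{lem:ihc_curve_surjective}. For the two-dimensional pieces, $Z$ is automatically a smooth proper scheme (smooth proper algebraic spaces of dimension two over a field are schemes). I would then apply Theorem~\ref{thm:rational_b_field_intro}---together with its extension to smooth quasi-projective varieties to handle the case when $\beta$ is not topologically trivial---to identify $\ktop_0(\dperf(Z, \beta))$ with a twisted Mukai structure $\ktop_0(Z)^B$. The integral Hodge classes are then triples $(r, c_1, s)$ with $c_1 + r B \in \NS(Z)_{\Q}$, and I would realize such classes algebraically by combining a $\beta$-twisted locally free sheaf $E$ of rank $\per(\beta) = \ind(\beta)$ (existing by de Jong's theorem on surfaces), its twists $E \otimes L$ by line bundles $L \in \Pic(Z)$, and skyscraper sheaves.

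The main obstacle I expect is the careful bookkeeping of Brauer classes through destackification and, above all, verifying that the algebraic classes assembled in the last step span the full integral Hodge lattice of $\ktop_0(\dperf(Z, \beta))$ when $\beta$ is not topologically trivial, so that the quasi-projective version of Theorem~\ref{thm:rational_b_field_intro} suffices to close the argument.
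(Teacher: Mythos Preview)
Your proposal has the right endgame---de Jong's theorem supplies an algebraic class of rank $\per(\beta)$, and together with line-bundle twists and skyscrapers this spans the Hodge lattice---but the reduction to that endgame has two gaps.

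First, a minor one: the generic inertia of a smooth proper Deligne--Mumford stack need not be abelian (take $X = \B S_3$), so your character decomposition $\dperf(X) \simeq \bigoplus_{\rho \in \hat G} \dperf(X', \alpha_\rho)$ is not available in general. This is repairable via Corollary~\ref{cor:non-abelian_gerbe_cover}, which for any finite-inertia gerbe produces a finite \'etale cover $Y \to X^{\rig}$ and a class $\alpha$ with $\dperf(X) \simeq \dperf(Y, \alpha)$.

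Second, and more seriously: after destackification you have $\tilde X' \to Y$ a root stack over a smooth proper variety, but the Brauer class $\tilde\alpha \in \Br(\tilde X')$ need \emph{not} descend to $\Br(Y)$---indeed, by Lemma~\ref{lem:root_stack_brauer_class} the root stack is precisely what absorbs the ramification of $\alpha$, so typically $\Br(\tilde X')[n] \supsetneq \Br(Y)[n]$. Without descent, the root-stack semiorthogonal decomposition \eqref{eq:root_stack_sod_dec6} does not yield a twisted decomposition with pieces of the form $\dperf(Y, \beta)$ and $\dperf(E, \beta|_E)$. So your reduction to smooth proper \emph{varieties} does not go through, and you are stuck with the orbifold.

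The paper avoids this by never trying to descend $\alpha$ to a variety. Instead it works directly on the orbifold $X$, chooses an affine open $U \subset X$ (on which the class is automatically topologically trivial, since $U$ is an affine surface), and runs a localization sequence for $\ktop$ against the complement. The quasi-projective twisted Mukai structure on $U$ (Theorem~\ref{thm:open_twisted_mukai}, Lemma~\ref{lem:affine_surface}) identifies the Hodge classes over $U$ with $\per(\alpha) \cdot \Z$ via rank, and de Jong's theorem supplies an algebraic class hitting the generator; the kernel is handled by surjectivity of $\K_0 \to \ktop_0$ on the boundary curve (Lemma~\ref{lem:dm_curve}). Your final paragraph already anticipates that the non-topologically-trivial case forces the quasi-projective machinery, and once you pass to an affine open and invoke localization you are essentially executing the paper's argument---just on the orbifold rather than on a variety you cannot reach.
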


	In the case when the surface $X$ is a variety, the integral Hodge conjecture for $\dperf(X)$ follows from Lefchetz's (1,1) theorem and the degeneration of the Atiyah--Hirzebruch spectral sequence. Lefschetz's (1,1) theorem holds on smooth, proper Deligne--Mumford surfaces, but the topological $K$-theory of $\dperf(X)$ is quite different from the integral cohomology of $X$, as one may already see for $X = \B G$ with $G$ a finite group. 

	Instead, the proof of Theorem~\ref{thm:ihc_dm_intro} is based on a reduction to the case of a twisted derived category $\dperf(X, \alpha)$, where $X$ is an orbifold. Then, using Theorem~\ref{thm:rational_b_field_intro} (or more precisely, its extension to smooth, quasi-projective varieties), we ultimately reduce to the problem of producing an algebraic class of rank $\per(\alpha)$ in $\Hdg(\dperf(X, \alpha), \Z)$. The existence of such a class follows from---and in fact is equivalent to---de Jong's theorem that $\per(\alpha) = \ind(\alpha)$ \cite{dejong_period_index}.

	\subsection{Organization of the paper}

	In \S \ref{sec:twisted_derived_categories}, we review aspects of linear categories and the theory of twisted derived categories. In \S \ref{sec:hodge_theory_of_categories}, we review the Hodge theory of categories. In \S \ref{sec:twisted_mukai_structures}, we develop the theory of twisted Mukai structures on smooth projective varieties, in families, and on quasi-projective varieties, and prove Theorem~\ref{thm:rational_b_field_intro}. In \S \ref{sec:hodge_theoretic_index}, we study the Hodge-theoretic index, and prove Theorem~\ref{thm:hind_bound_intro} and Theorem~\ref{thm:obstruction_intro}. In \S \ref{sec:counterexamples_to_the_integral_hodge_conjecture}, we prove Theorem~\ref{thm:gabber_intro} and Theorem~\ref{thm:sb_abelian_threefold}. In \S \ref{sec:the_integral_hodge_conjecture_for_DM_surfaces}, we prove Theorem~\ref{thm:ihc_dm_intro}.

	\subsection{Conventions}

	For $\C$-linear categories, we have followed the conventions of \cite{Perry_NCHPD}. An \emph{orbifold} over a field $k$ is a Deligne--Mumford stack of finite type over $k$ with trivial generic stabilizers. We have followed Giraud's convention \cite[Example V.4.8]{giraud} for the Brauer class of a Severi--Brauer variety.

	\subsection{Acknowledgements} 

	It is a pleasure to thank Alex Perry for his advice and encouragement throughout the completion of this project. In addition, I would like to thank Bhargav Bhatt, Jack Carlisle, Andy Jiang, Johan de Jong, and Claire Voisin for useful conversations and correspondences. This work was partially supported by NSF DMS-1840234.

	\section{Twisted derived categories} 
	\label{sec:twisted_derived_categories}

	In \S \ref{ssec:linear_categories}, we review some terminology and conventions regarding linear categories. In \S \ref{ssec:twisted_derived_categories}, we review the basics of twisted derived categories. In \S \ref{ssec:non_abelian_gerbes}, we prove a key result on derived categories of gerbes (Proposition \ref{prop:non_abelian_reduction}). Finally, in \S \ref{ssec:root_stacks}, we discuss root stacks and their relation to derived categories, weak factorization, and extensions of Brauer classes.

	\subsection{Linear categories}
	\label{ssec:linear_categories}

	The content of \S\ref{sec:twisted_derived_categories} is rather formal, so we work in a greater level of generality than is necessary for the rest of the paper. Note, however, that for the main results of the paper, it is sufficient to consider smooth, separated Deligne--Mumford stacks over a field of characteristic $0$.

	In general, we follow the conventions and terminology of \cite{Perry_NCHPD}, with the exception that our base $S$ is permitted to be a \emph{perfect algebraic stack}, as in \cite{BZFN}. In contrast to \cite{BZFN}, we avoid for simplicity the language of derived algebraic geometry.

	\begin{definition}
	\label{def:perfect_stack}
	    An algebraic stack $S$ is \emph{perfect} if the following conditions hold:
		\begin{enumerate} [label = \normalfont{(\arabic*)}]
			\item $\dqc(S)$ is compactly generated. 
			\item The compact and perfect objects of $\dqc(S)$ coincide. 
			\item The diagonal of $S$ is affine.
		\end{enumerate}
	Quasi-compact tame Deligne--Mumford stacks with affine diagonal are perfect \cite{rydh_hall}. For example, separated Deligne--Mumford stacks of finite type over a field of characteristic $0$ are perfect.
	\end{definition}

	Let $S$ be a perfect algebraic stack. Then $\dperf(S)$, with the tensor product of complexes, may be regarded as a commutative algebra object of the category $\Cat_{\mathrm{st}}$ of small, idempotent-complete stable $\infty$-categories. 

	An \emph{$S$-linear category} is a $\dperf(S)$-module object of $\Cat_{\mathrm{st}}$. The collection of $S$-linear categories forms an $\infty$-category $\Cat_S = \Mod_{\dperf(S)}(\Cat_{\mathrm{st}})$, with a symmetric monoidal structure given by the tensor product of $\dperf(S)$-modules $\scc \otimes_{S} \scc'$. Moreover, given a $\scc, \scc' \in \Cat_S$, there is a mapping object $\Fun_{S}(\scc, \scc') \in \Cat_S$, satisfying the property that
	\[
	   	\Map_{\Cat_S}(\scc, \scc') = \Fun_S(\scc, \scc')^{\cong}
	\]   
	where the left-hand side denotes the morphism space in the category $\Cat_S$, and the right-hand side denotes the maximal $\infty$-subgroupoid of $\Fun_S(\scc, \scc')$, obtained from $\Fun_S(\scc, \scc')$ by discarding non-invertible $1$-morphisms.

	\begin{example}
		Let $f:X \to S$ be a morphism. Then $\dperf(X)$ is an $S$-linear category, with the action of $\dperf(S)$ given by 
		\[
			E \mapsto E \otimes f^*(F), \hspace{5mm} E \in \dperf(X), F \in \dperf(S).
		\]
	\end{example}

	Let $T \to S$ be a morphism, and let $\scc$ be an $S$-linear category. The \emph{base change $\scc_T$} is the tensor product
	\[
		\scc \otimes_{\dperf(S)} \dperf(T)
	\]
	regarded as an $T$-linear category. 

	\begin{example}
	 	Let $T \to S$ be a morphism between perfect algebraic stacks, and let $\scc = \dperf(S)$. Then
	 	\[
	 		\scc_T \cong \dperf(T)
	 	\]
	 	by \cite[Theorem 1.2]{BZFN}
	\end{example}

	\subsection{Twisted derived categories}
	\label{ssec:twisted_derived_categories}

	In this section, we establish some basic notation and results about twisted derived categories. We begin by briefly introducing the abstract theory, due to To\"en and extended by Antieau--Gepner \cite{Antieau-Gepner} and Antieau \cite{Antieau-et_tw}. 

	Let $X$ be a perfect stack. An $X$-linear category is a \emph{twisted derived category} over $X$ if there exists an fppf cover $U \to X$ such that $\scc_U$ is equivalent to $\Dperf(U)$. According to a result of To\"en \cite{Toen_der_azu}, any twisted derived category $\scc$ is determined up to equivalence by a class
	\[
		[\scc] \in \dBr(X) = \Ho^2_{\et}(X, \G_m) \times \Ho^1_{\et}(X, \Z),
	\]
	where $\dBr(X)$ is the \emph{derived Brauer group} of $X$. Briefly, one may construct $[\scc]$ as follows: the higher stack $\sIso_X(\scc, \dperf(X))$ of $X$-linear equivalences is a torsor under $G = \sAut_X(\dperf(X)) = \B \G_m \times \Z$, and is determined up to isomorphism by the cohomology class $[\scc] \in \Ho^1_{\et}(X, G)$. Conversely, for any $\alpha \in \dBr(X)$, there exists an $X$-linear category $\scc$, unique up to equivalence, with $\alpha = [\scc]$.
	
	\begin{notation}
		For $\alpha \in \dBr(X)$, we write $\dperf(X, \alpha)$ for the choice of a category $\scc$ with $[\scc] = \alpha$.
	\end{notation}

	We warn the reader that $\dperf(X, \alpha)$ is unique up to potentially non-unique $X$-linear equivalence. More precisely, if $\scc$ and $\scc'$ are twisted derived categories categories with $[\scc] = [\scc']$, then the set of isomorphism classes of $X$-linear equivalences $\scc \simeq \scc'$ is a torsor under $\Pic(X) \times \Ho^0_{\et}(X, \Z)$.

	\begin{example}
	\label{ex:dperf1}
		Let $\scx \to X$ be a $\G_m$-gerbe, and let $\dperf^1(\scx/X)$ be the derived category of perfect $1$-twisted complexes on $X$. When the gerbe structure on $\scx$ is understood, we simply write $\dperf^1(\scx)$. Then $\dperf^1(\scx)$ is a twisted derived category over $X$, with
		\[
			[\dperf^1(\scx)]= [\scx] \in \Ho^2_{\et}(X, \G_m) \subset \dBr(X).
		\]
		To prove this, observe that $\sIso_X(\dperf^1(\scx), \dperf(X))$ may be identified with $\sPic^{-1}(\scx/X) \times \Ho^0_{\et}(X, \Z)$, where the first factor is the stack of $-1$-twisted line bundles on $\scx/X$. Then $\sPic^{-1}(\scx/X) \to X$ is a $\G_m$-gerbe of class $[\scx]$ (cf. \cite[Prop. 2.11]{shin}). 
	\end{example}

	\begin{example}
	\label{ex:mun_gerbe}
		For $n > 0$, let $\scx \to X$ be a $\bmu_n$-gerbe. As in the previous example, the $X$-linear category $\dperf^1(\scx)$ of perfect $1$-twisted complexes on $\scx$ is a twisted derived category of class $\alpha \in \Ho^2_{\et}(X, \G_m)$, where $\alpha$ is the image of $[\scx]$ under the homomorphism
		\[
			\Ho^2_{\et}(X, \bmu_n) \to \Ho^2_{\et}(X, \G_m).
		\]
		In fact, there is an orthogonal decomposition
		\begin{equation}
		\label{eq:mun_gerbe_dec6}
			\dperf(\scx) = \langle \dperf^0(\scx), \dperf^1(\scx), \dots, \dperf^{n - 1}(\scx)  \rangle,
		\end{equation}
		where $\dperf^k(\scx)$ is the category of perfect $k$-twisted complexes on $\scx$, and $\dperf^0(\scx)$ is the pullback of $\dperf(X)$.
	\end{example}

	\begin{example}
	\label{ex:bernardara}
		Let $\pi:\P \to X$ be a Severi--Brauer variety of relative dimension $n - 1$. According to a result of Bernardara \cite{bernardara} (and \cite{bergh_gerbe} in the case of an algebraic stack $X$), there is an $X$-linear semiorthogonal decomposition 
		\begin{equation} \label{eq:bernardara_dec6}
			\dperf(\P) = \langle \dperf(\P)_0, \dperf(\P)_{1}, \dots, \dperf(\P)_{n - 1} \rangle,
		\end{equation}
		where $\dperf(\P)_0$ is the pullback of $\dperf(X)$, and for any $k$, $\dperf(\P)_k$ is a twisted derived category over $X$ of class $[\P]^{k}$, where $[\P]$ is the class of $\P$ in the cohomological Brauer group of $X$. 
	\end{example}

	\begin{warning}
		We warn the reader that we follow Giraud's convention \cite[Example V.4.8]{giraud} regarding the Brauer class of a Severi--Brauer variety, which differs from Bernardara's convention by a sign.
	\end{warning}

	\begin{remark}
		When $[\P] = 0$, \eqref{eq:bernardara_dec6} recovers Beilinson's semiorthogonal decomposition for a projective bundle, with $\dperf(\P)_k = \pi^*\dperf(X) \otimes \sco_{\P}(1)$ for any tautological line bundle $\sco_{\P}(1)$.
	\end{remark}

	\begin{remark}
		We explain the compatibility between Example~\ref{ex:bernardara} and Example~\ref{ex:mun_gerbe}. Let $X$ be a separated scheme of finite type over $\C$. Let $\scx \to X$ be a $\bmu_n$-gerbe, and suppose that there exists a Severi--Brauer variety $\P \to X$ which represents the image of $[\scx]$ in $\Ho^2_{\et}(X, \G_m)$.

		We observe that $\P_{\scx} \to \P$ is a projective bundle. Consider the pullback diagram
		\[
			\begin{tikzcd}
				\P_{\scx} \ar[d, "\pi'"] \ar[r, "f'"] & \P \ar[d, "\pi"] \\
				\scx \ar[r, "f"] & X
			\end{tikzcd}
		\]
		Let $\Pic^{-1}(\P_{\scx})_1$ be the set of relative hyperplane bundles on $\P_{\scx}/\scx$ which are $-1$-twisted for the gerbe structure $\P_{\scx} \to \P$. Then $\Pic^{-1}(\scx)_1$ is nonempty (cf. the proof of \cite[Theorem 6.2]{bergh_weak_factor}, for instance), and moreover it is a torsor under $\Pic(X)$. 

	\begin{lemma}
	\label{lem:weighted_lb_lemma}
	    For any $\sco_{\P_{\scx}}(1) \in \Pic^{-1}(\P_{\scx})_1$, the Fourier--Mukai functor
	    \[
	    	\Phi_{\sco_{\P_{\scx}(1)}}:\dperf^1(\scx) \to \dperf(\P)_1
	    \]
	    is an $X$-linear equivalence of categories. 
	\end{lemma}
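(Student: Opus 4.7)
The plan is to interpret $\Phi := \Phi_{\sco_{\P_{\scx}}(1)}$ as the composition $E \mapsto (f')_*(\pi'^* E \otimes \sco_{\P_{\scx}}(1))$ and to verify the equivalence by fppf descent on $X$. First I would check well-definedness: the gerbe $f' : \P_{\scx} \to \P$ is a base change of $f : \scx \to X$, so the relative $\bmu_n$-inertias are canonically identified, and the pullback $\pi'^*E$ of a $1$-twisted complex $E$ on $\scx$ remains $1$-twisted for $\P_{\scx}/\P$. Tensoring with the $(-1)$-twisted line bundle $\sco_{\P_{\scx}}(1)$ then yields a $0$-twisted complex on $\P_{\scx}$, which descends along $f'$ to a well-defined perfect complex on $\P$. $X$-linearity is automatic, since every operation takes place over $X$.

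To verify that $\Phi$ lands in $\dperf(\P)_1$ and is an equivalence, the strategy is to argue by fppf descent on $X$. Both $\dperf^1(\scx)$ (by Example~\ref{ex:mun_gerbe}) and $\dperf(\P)_1$ (by Example~\ref{ex:bernardara}) are $X$-linear twisted derived categories of the same Brauer class $\alpha = [\P] \in \Br(X)$, and hence form fppf sheaves of $X$-linear categories; moreover the construction of $\Phi$ visibly commutes with fppf base change of $X$. It therefore suffices to check the claim after pulling back along an fppf cover $U \to X$ that splits $[\scx] \in \Ho^2_{\et}(X, \bmu_n)$, over which $\scx_U \simeq \B\bmu_n \times U$ and $\P_U \simeq \P(V)$ for some rank $n$ vector bundle $V$ on $U$.

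In this trivialized setting, $\dperf^1(\scx_U)$ identifies with $\dperf(U)$ via tensoring with the weight-$1$ character $\chi$, and $\dperf(\P_U)_1 = \pi^*\dperf(U) \otimes \sco_{\P(V)}(1)$ likewise identifies with $\dperf(U)$. The line bundle $\sco_{\P_{\scx_U}}(1)$ on $\P(V) \times \B\bmu_n$ decomposes as $\sco_{\P(V)}(1) \boxtimes \chi^{-1}$, and a direct application of the projection formula shows that $\Phi$ corresponds to the identity on $\dperf(U)$ under these identifications; hence $\Phi$ is an equivalence landing in the correct subcategory. The hardest part will be to match Bernardara's intrinsic definition of $\dperf(\P)_1$ with the explicit model $\pi^*\dperf(U) \otimes \sco_{\P(V)}(1)$ arising after the class splits, and to set up the fppf sheaf/descent properties on both sides in a form compatible with the FM construction.
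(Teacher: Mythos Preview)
Your approach is correct but takes a genuinely different route from the paper. You argue by fppf descent: trivialize both the gerbe and the Severi--Brauer variety over a cover, then check by hand that the Fourier--Mukai functor becomes (up to a twist by a line bundle on the base) the identity on $\dperf(U)$. The paper instead works globally on $\P_{\scx}$ and exploits the two semiorthogonal decompositions of $\dperf(\P_{\scx})$ simultaneously: one coming from the $\bmu_n$-gerbe $\P_{\scx} \to \P$ (as in \eqref{eq:mun_gerbe_dec6}) and one from the projective bundle $\P_{\scx} \to \scx$ (as in \eqref{eq:bernardara_dec6}). Since $\sco_{\P_{\scx}}(1)$ has weight $-1$ for the gerbe and is a relative hyperplane bundle for the projective bundle, tensoring by it shifts the index by one in \emph{both} decompositions, yielding equivalences $\dperf^1(\scx) \otimes_X \dperf(\P) \simeq \dperf^0(\scx) \otimes_X \dperf(\P)$ and $\dperf(\scx) \otimes_X \dperf(\P)_0 \simeq \dperf(\scx) \otimes_X \dperf(\P)_1$; intersecting these inside $\dperf(\P_{\scx})$ gives the desired equivalence $\dperf^1(\scx) \otimes_X \dperf(\P)_0 \simeq \dperf^0(\scx) \otimes_X \dperf(\P)_1$, which one then identifies with $\Phi$.

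The paper's argument is slicker: it avoids the bookkeeping you flag as ``the hardest part'' (matching Bernardara's intrinsic $\dperf(\P)_1$ with the explicit model after trivialization, and tracking the descent data for $\Phi$), and it never leaves the category $\dperf(\P_{\scx})$. Your descent argument buys concreteness and makes the well-definedness of $\Phi$ transparent, but requires justifying that equivalences of $X$-linear categories can be checked fppf-locally (which is true, and is used elsewhere in the paper with a citation to \cite{benzvi2012morita}) and carefully handling the ambiguity that after base change $\sco_{\P_{\scx}}(1)$ is only $\sco_{\P(V)}(1) \boxtimes \chi^{-1}$ up to a line bundle pulled back from $U$.
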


	\begin{proof}
	   	From the semiorthogonal decompositions above, tensoring with $\sco_{\P_{\scx}}(1)$ induces an equivalence 
	    \[
	    	\dperf^1(\scx) \otimes_X \dperf(\P) \to \dperf^0(\scx) \otimes_X \dperf(\P),
	    \]
	    and an equivalence 
	    \[
	    	\dperf(\scx) \otimes_X \dperf(\P)_0 \to \dperf(\scx) \otimes_X \dperf(\P)_1,
	    \]
	    where each category is regarded as an admissible subcategory of $\P_{\scx}$ by pullback on both factors. Comparing the two, we see that tensoring with $\sco_{\P_{\scx}}(1)$ induces an equivalence
	    \[
	    	\dperf^1(\scx) \otimes_X \dperf(\P)_0 \to \dperf^0(\scx) \otimes_X \dperf(\P)_1,
	    \]
	    which may be identified with the Fourier--Mukai transform in the statement of the lemma.
	\end{proof}

	\end{remark}

	\subsection{Derived categories of non-abelian gerbes}
	\label{ssec:non_abelian_gerbes}

	Let $X$ be a perfect algebraic stack, and let $\scx \to X$ be a gerbe with finite inertia. Let $\hat \scx \to X$ be the moduli stack of simple coherent sheaves on $\scx \to X$, and let $\hat \scx^{\sh} \to X$ be the sheafification of $\hat \scx$ over $X$. We note that $\hat \scx \to \hat \scx^{\sh}$ is a $\G_m$-gerbe. 

	Let $\sce \in \dperf(\scx \times_X \hat \scx)$ be the universal simple, perfect sheaf, and let
	\[
		\Phi_{\sce}:\dperf(\scx) \longrightarrow \dperf^{1}(\hat \scx/\hat \scx^{\sh})
	\]
	be the Fourier-Mukai transform associated to $\sce$.

	\begin{proposition}
	\label{prop:non_abelian_reduction}
	    In the context above, if $X$ is a $\Q$-stack, then $\Phi_{\sce}$ is an equivalence. 
	\end{proposition}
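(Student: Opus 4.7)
The plan is to establish the equivalence via fppf descent on $X$, reducing to a local model in which $\scx$ becomes $\B G$ for a finite group $G$, where the desired equivalence becomes the Wedderburn-type decomposition $\dperf(\B G) \simeq \prod_{\rho \in \Irr(G)} \dperf(\mathrm{pt})$. The $\Q$-stack hypothesis enters precisely at this last step, via Maschke's theorem.

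First I would verify that $\Phi_{\sce}$ is compatible with flat base change $T \to X$: the source $\dperf(\scx_T)$, the target $\dperf^1(\hat{\scx}_T/\hat{\scx}_T^{\sh})$, and the pullback of $\sce$ all arise by base change from their counterparts on $X$, using that $X$ (hence $\scx$) is a perfect stack and that fppf descent holds for twisted derived categories, as reviewed in \S\ref{ssec:twisted_derived_categories}. Consequently, it suffices to show $\Phi_{\sce}$ is an equivalence after passing to an fppf cover $U \to X$. Since $\scx \to X$ has finite inertia and $X$ is a $\Q$-stack, one can choose $U$ so that $\scx_U \simeq U \times \B G$ for a finite group $G$: locally the gerbe has a section and the inertia is constant.

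On this local model, the simple sheaves parametrized by $\hat{\scx}_U$ are, fiberwise over $U$, skyscrapers tensored with irreducible $G$-representations; by Schur's lemma this gives $\hat{\scx}_U \simeq U \times \bigsqcup_{\rho \in \Irr(G)} \B\G_m$, with sheafification $U \times \Irr(G)$ and trivial $\G_m$-gerbe structure on each component. Therefore $\dperf^1(\hat{\scx}_U/\hat{\scx}_U^{\sh}) \simeq \prod_{\rho \in \Irr(G)} \dperf(U)$, and under this identification $\Phi_{\sce|_U}$ becomes the functor extracting the multiplicity space of each irreducible representation. That this is an equivalence is exactly the character-theoretic decomposition $\dperf(U \times \B G) \simeq \prod_{\rho} \dperf(U)$, valid in characteristic zero.

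The main obstacle is the descent reduction itself, especially because $\scx$ may be a non-abelian gerbe and need not be equivalent to $\B G$ on any preassigned cover. Since the conclusion is fppf-local in $X$ and any refinement is permissible, this is not a real difficulty. A more delicate point is ensuring the compatibility of $\hat{\scx}$ and $\hat{\scx}^{\sh}$ with base change, so that the fiberwise description above glues to the global one; this follows from the proposition's standing hypothesis that $\hat{\scx} \to \hat{\scx}^{\sh}$ is a $\G_m$-gerbe, together with the standard representability of moduli of simple perfect sheaves on gerbes with finite inertia in characteristic zero.
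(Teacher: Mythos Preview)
Your proposal is correct and follows essentially the same approach as the paper: reduce via fppf descent to a local model where the gerbe is $\B G$ for a finite group, then invoke Maschke and Schur. The only cosmetic difference is that the paper pushes the reduction all the way down to $X = \Spec k$ with $k$ algebraically closed (rather than stopping at $U \times \B G$), which slightly simplifies the final representation-theoretic step.
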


	\begin{proof}
	    The question is fppf-local on $X$ \cite[Prop. 2.1, Prop. 2.2]{benzvi2012morita}. Since the statement is compatible with fppf base change, we ultimately reduce to the case when $X = \Spec k$, for $k$ an algebraically closed field with $\Char(k) = 0$, and $\scx = \B G$ for a finite constant group scheme $G$. Then $\hat \scx^{\sh}$ may be identified with the set $\{[V_1],\dots, [V_{\ell}]\}$ of isomorphism classes of irreducible $G$-representations. 

			In terms of $G$-representations, the component of $\Phi_{\sce}$ which maps to $\dperf(\{[V_i]\})$ is given by
		\[
			V \mapsto (V \otimes V_i)^G \cong \Hom_G(V^\vee, V_i), 
		\]  
		and it follows from Maschke's theorem and Schur's lemma that $\Phi_{\sce}$ gives an equivalence at the level of categories of coherent sheaves. 
	\end{proof}

	\begin{corollary}	
	\label{cor:non-abelian_gerbe_cover}
	    Let $X$ be a perfect algebraic stack over $\Q$, and let $\scx \to X$ be a gerbe with finite inertia. There exists a finite \'etale cover $Y \to X$ which is representable by algebraic spaces, a class $\alpha \in \Ho_{\et}^2(Y, \G_m)$, and an $X$-linear equivalence $\dperf(\scx) \cong \dperf(Y, \alpha)$.
	\end{corollary}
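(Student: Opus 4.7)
The plan is to exhibit $Y$ and $\alpha$ directly using the proposition just proved. I would set $Y := \hat\scx^{\sh}$ and take $\alpha := [\hat\scx \to Y] \in \Ho^2_{\et}(Y, \G_m)$ to be the class of the $\G_m$-gerbe $\hat\scx \to \hat\scx^{\sh}$. Then Proposition \ref{prop:non_abelian_reduction} combined with Example \ref{ex:dperf1} produces $X$-linear equivalences
\[
    \dperf(\scx) \;\simeq\; \dperf^1(\hat\scx/Y) \;\simeq\; \dperf(Y,\alpha),
\]
so the entire statement reduces to verifying that $Y \to X$ is finite étale and representable by algebraic spaces.

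For this I would argue étale-locally on $X$. Since $\hat\scx$ is defined as a moduli stack of simple perfect complexes, its formation commutes with arbitrary base change on $X$, and sheafification commutes with étale base change, so $Y \to X$ also commutes with étale base change. Finiteness, étaleness, and representability by algebraic spaces can each be checked after passing to an étale cover of $X$. Because $X$ is over $\Q$ and $\scx \to X$ has finite inertia, the inertia group scheme $I_\scx$ is étale; combining this with the étale-local existence of sections of gerbes with étale band and a further étale extension to trivialize the band, we may reduce to the case $\scx = X \times \B G$ for a finite constant group $G$.

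In this local model, the analysis carried out at the end of the proof of Proposition \ref{prop:non_abelian_reduction} applies verbatim: by Maschke's theorem and Schur's lemma, simple perfect complexes on $\scx$ over a test scheme $T/X$ are, up to the scalar $\G_m$-automorphisms, in bijection with the isotypic components of the irreducible $G$-representations. Hence $\hat\scx = \coprod_{[V]} \B_X\G_m$ and therefore $Y = \hat\scx^{\sh}$ is a finite disjoint union of copies of $X$, possibly after one further étale cover to split Galois orbits of irreducibles over the base. This exhibits $Y \to X$ as finite étale and representable by schemes, which suffices.

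The main obstacle, though essentially routine, is justifying the étale-local trivialization $\scx \cong X \times \B G$: it requires combining finiteness of the inertia with the characteristic-zero hypothesis (to make $I_\scx$ étale), the étale-local existence of sections of gerbes, and an extra étale extension to ensure the band is a constant group. None of this is deep, but it is the only place where one has to be careful that the reductions are genuinely étale rather than fppf.
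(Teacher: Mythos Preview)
Your proposal is correct and takes exactly the same approach as the paper: the paper's proof likewise sets $Y = \hat\scx^{\sh}$ and $\alpha = [\hat\scx]$ and invokes Proposition~\ref{prop:non_abelian_reduction}. The only difference is that the paper leaves the verification that $Y \to X$ is finite \'etale and representable entirely implicit, whereas you spell out the \'etale-local reduction to $\scx = X \times \B G$ that justifies it.
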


	\begin{proof}
	    We may take $Y = \hat \scx^{\sh}$ and $\alpha = [\hat \scx]$ in the context of Proposition~\ref{prop:non_abelian_reduction}.
	\end{proof}

	\begin{remark}
		When $\scx \to X$ is a $\bmu_n$-gerbe, $\hat \scx^{\sh}$ is a disjoint union of $n$ copies of $X$, labeled by the characters of $\bmu_n$, and the equivalence of Proposition~\ref{prop:non_abelian_reduction} recovers the orthogonal decomposition \eqref{eq:mun_gerbe_dec6}.
	\end{remark}

	\begin{example}[Total rigidification]
	\label{ex:total_rig}
		Let $X$ be a smooth Deligne--Mumford stack over a field $k$ of characteristic $0$. Then $X$ admits a \emph{total rigidification} \cite[Appendix A]{tame_stacks_AOC}, which is a gerbe $X \to X^{\rig}$ with finite inertia such that $X^{\rig}$ is an orbifold over $k$. (We recall our convention that an orbifold over $k$ is a Deligne--Mumford stack of finite type with trivial generic stabilizers). By Corollary~\ref{cor:non-abelian_gerbe_cover}, $\dperf(X)$ is equivalent to $\dperf(Y, \alpha)$, where $Y$ is an orbifold and $\alpha \in \Ho^2_{\et}(Y, \G_m)$.
	\end{example}

	\subsection{Root stacks}
	\label{ssec:root_stacks}

	In what follows, we have followed the notation and conventions of \cite[\S 3]{geometricity_dm}, to which we refer for details. 

	\begin{definition}
	\label{def:root_stack}
	    Let $X$ be an algebraic stack, and let $E \subset X$ be an effective Cartier divisor. For an integer $r > 0$, we recall that the \emph{$r$th root stack} $X_{r^{-1} E} \to X$ of $X$ along $E$ is given by the pullback 
			\[
				\begin{tikzcd}
					X_{r^{-1} E} \ar[d] \ar[r] & {[\A^1/\G_m]} \ar[d, "\pi_r"] \\
					X \ar[r, "f"] & {[\A^1/\G_m]},
				\end{tikzcd}
			\]
			where $f$ is the morphism induced by $E$, and $\pi_r$ is induced by $r$th power map on both $\A^1$ and $\G_m$.
	\end{definition}

	Let $\pi:X_{f^{-1} E} \to X$ be the $r$th stack of $X$ along $E$. According to \cite[Theorem 4.7]{geometricity_dm}, $\dperf(X_{r^{-1} E})$ admits an $X$-linear semiorthogonal decomposition of the form
	\begin{equation}
	\label{eq:root_stack_sod_dec6}
		\dperf(X_{r^{-1}E}) = \langle \scc_{r - 1}, \scc_{r - 2}, \dots, \scc_1,  \pi^* \dperf(X)\rangle,
	\end{equation}
	where each $\scc_i$ is equivalent to $\dperf(E)$. In fact, the base change of \eqref{eq:root_stack_sod_dec6} to the preimage $r^{-1} E$ of $E$ in $X_{r^{-1} E}$ recovers the semiorthogonal decomposition \eqref{eq:mun_gerbe_dec6} associated to the $\bmu_n$-gerbe $r^{-1} E \to E$, which has a trivial cohomological Brauer class.

		\begin{example}[Weak factorization]
		\label{ex:weak_factorization}
		According to a result of Harper \cite{harper} (see also \cite{bergh_weak_factor}), if $X$ and $Y$ are smooth, separated Deligne--Mumford stacks over a field of characteristic $0$, which are isomorphic over an open substack $U$, then $X$ and $Y$ may be connected by a chain 
		\[
			\begin{tikzcd}
				X = X_0 \ar[r, dashed, "f_0"] & X_1 \ar[r, dashed, "f_1"] & \cdots \ar[r, dashed, "f_{n - 1}"] & X_n = Y,
			\end{tikzcd}
		\]
		where, for each $i$, either $f_i$ or $f_i^{-1}$ is a root stack over a smooth divisor in the complement of $U$ or a blowup along a smooth closed substack in the complement of $U$.

		If $X_i \to X_i^{\rig}$ denotes the total rigidification of $X_i$ (Example~\ref{ex:total_rig}), then there is a diagram
		\[
			\begin{tikzcd}
				X = X_0 \ar[r, dashed, "f_0"] \ar[d] & X_1 \ar[d] \ar[r, dashed, "f_1"] & \cdots \ar[r, dashed, "f_{n - 1}"] & X_n \ar[d] = Y \\
				X^{\rig} = X_0^{\rig} \ar[r, dashed, "f_0^{\rig}"]  & X_1^{\rig} \ar[r, dashed, "f_1^{\rig}"] & \cdots \ar[r, dashed, "f_{n - 1}^{\rig}"] & X_n^{\rig}  = Y^{\rig},
			\end{tikzcd}
		\]
		where the vertical morphisms are gerbes with finite inertia.
	\end{example}

	\begin{variant}
		Suppose that $X$ is smooth over a field $k$, and let $E$ be a simple normal crossing divisor of $X$, with ordered components $E_1, \dots, E_n$. For a multi-index $r \in \Z_{> 0}^n$, the \emph{iterated $r$th root stack} $X_{r^{-1} E}$ is given by the fiber product 
		\[
			X_{r_1^{-1} E_1} \times_X X_{r_2^{-1} E_2} \times_X \cdots \times_X X_{r_n^{-1} E_n}.
		\]
		The stack $X_{r^{-1} E}$ is smooth over $k$, and the preimage of $E$ is a simple normal crossing divisor.  
	\end{variant}

	The following lemma is standard, but we include it for completeness.

	\begin{lemma}
	\label{lem:root_stack_brauer_class}
	    Let $X$ be a smooth variety over a field $k$, and let $U \subset X$ be an open subvariety such that $X - U$ has simple normal crossings with components $E_i$. If $X_{n^{-1} E} \to X$ is an iterated $\bar n$th root stack over $E = (E_i)$, where $\bar n = (n, n, \dots, n)$, then the restriction
	    \[
	    	\Br(X_{n^{-1}D})[n] \to \Br(U)[n]
	    \]
	    is an isomorphism. 
	\end{lemma}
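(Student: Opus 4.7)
The plan is to prove injectivity and surjectivity of $\Br(X_{n^{-1}E})[n] \to \Br(U)[n]$ separately, via the Kummer exact sequence and purity for the smooth Deligne--Mumford pair $(X_{n^{-1}E}, D)$, where $D = \bigsqcup D_i$ denotes the preimage of $E$ in $X_{n^{-1}E}$ (each $D_i \to E_i$ being a $\bmu_n$-gerbe).

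For injectivity, since $X_{n^{-1}E}$ is smooth (hence regular) and $U$ is a dense open substack, $\Br(X_{n^{-1}E})$ injects into $\Br(U)$: Brauer classes on a regular noetherian DM stack are determined by their restriction to any dense open, since both inject into the Brauer group of the function field.

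For surjectivity, I would use the Kummer sequences on $X_{n^{-1}E}$ and $U$. Given $\alpha \in \Br(U)[n]$, lift it to $\tilde\alpha \in H^2(U, \bmu_n)$. By the Gysin exact sequence for the smooth pair $(X_{n^{-1}E}, D)$,
\[
    H^2(X_{n^{-1}E}, \bmu_n) \to H^2(U, \bmu_n) \xrightarrow{\partial} \bigoplus_i H^1(D_i, \Z/n),
\]
the class $\tilde\alpha$ lifts to $H^2(X_{n^{-1}E}, \bmu_n)$ iff $\partial(\tilde\alpha) = 0$. The main computation is to verify this vanishing, which I would do by comparison with the Gysin sequence for the pair $(X, E)$ via the natural morphism $\pi : X_{n^{-1}E} \to X$. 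Since $\pi$ has ramification index $n$ at each $E_i$, the residues satisfy the local formula
\[
    \partial(\tilde\alpha) = n \cdot \pi^*\bigl(\partial_X(\tilde\alpha)\bigr),
\]
where $\partial_X : H^2(U, \bmu_n) \to \bigoplus_i H^1(E_i, \Z/n)$ is the Gysin residue for $(X, E)$. Since $\partial_X(\tilde\alpha)$ is annihilated by $n$, the right-hand side vanishes. A final Kummer diagram chase, using the elementary fact that $\Pic(X_{n^{-1}E}) \to \Pic(U)$ is surjective (since components of $D$ give extensions of any divisor class on $U$), converts the surjectivity of the middle vertical map in the comparison of Kummer sequences into surjectivity of $\Br(X_{n^{-1}E})[n] \to \Br(U)[n]$.

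The main obstacle is justifying the residue formula. I would verify this by reducing to the local model $X = \Spec R$ with $E = V(t)$, where $X_{n^{-1}E} = [\Spec R[u]/(u^n - t)\,/\,\bmu_n]$ and both residues may be computed explicitly from their Kummer-theoretic descriptions; the factor of $n$ then emerges directly from the relation $u^n = t$.
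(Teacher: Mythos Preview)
Your proof is correct and follows essentially the same approach as the paper. The paper's proof is a single sentence: ``By purity, one may reduce to the case of a DVR, which is treated in \cite[\S 3.2]{lieblich_arithmetic_surface}.'' Your Gysin-sequence argument is exactly what ``purity'' unpacks to, and your local-model verification of the residue formula $\partial(\tilde\alpha) = n \cdot \pi^*(\partial_X(\tilde\alpha))$ is precisely the DVR computation the paper cites from Lieblich.
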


	\begin{proof}
	    By purity, one may reduce to the case of a DVR, which is treated in \cite[\S 3.2]{lieblich_arithmetic_surface}.
	\end{proof}

	\begin{lemma}
	\label{lem:open_subvariety_gerbe}
	    Let $U$ be a smooth, quasi-projective variety over a field $k$ of characteristic $0$, and let $\alpha \in \Br(U)$. There exists a smooth, proper Deligne--Mumford stack $X$, and an open immersion $j:U \to X$, such that the following conditions hold:
	    \begin{enumerate} [label = (\arabic*)]
	    	\item There exists a class $\alpha' \in \Br(X)$ whose restriction to $U$ is $\alpha$.
	    	\item Any $\bmu_n$-gerbe $\scu \to U$ with Brauer class $\alpha$ extends to a $\bmu_n$-gerbe $\scx \to X$ of Brauer class $\alpha'$.
	    \end{enumerate}
	\end{lemma}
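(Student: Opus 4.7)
The plan is to construct $X$ as an iterated root stack along the boundary of a smooth projective compactification of $U$ with simple normal crossings, and to deduce both assertions from Lemma~\ref{lem:root_stack_brauer_class} together with a diagram chase in the Kummer sequence.

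First, since $k$ has characteristic zero, Nagata compactification combined with Hironaka's resolution of singularities produces a smooth projective variety $\bar U$ containing $U$ as an open subvariety, with complement $E = \bar U \smallsetminus U$ a simple normal crossings divisor with components $E_1, \dots, E_r$. Let $n$ be the order of $\alpha$ in $\Br(U)$, and set $X = \bar U_{n^{-1} E}$ to be the iterated $n$-th root stack along $E = (E_i)$. Then $X$ is a smooth, proper Deligne--Mumford stack, and the inclusion $U \hookrightarrow \bar U$ factors as an open immersion $j \colon U \hookrightarrow X$. By Lemma~\ref{lem:root_stack_brauer_class}, the restriction $\Br(X)[n] \to \Br(U)[n]$ is an isomorphism, producing a unique $\alpha' \in \Br(X)$ of order dividing $n$ with $\alpha'|_U = \alpha$, which gives~(1).

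For~(2), fix a $\bmu_n$-gerbe $\scu \to U$ representing a class $[\scu] \in \Ho^2_{\et}(U, \bmu_n)$ with Brauer class $\alpha$. The Kummer sequence on $X$ and on $U$ fits into a commutative diagram with exact rows
\[
\begin{tikzcd}
\Pic(X)/n \ar[r] \ar[d] & \Ho^2_{\et}(X, \bmu_n) \ar[r] \ar[d] & \Br(X)[n] \ar[r] \ar[d, "\sim"] & 0 \\
\Pic(U)/n \ar[r] & \Ho^2_{\et}(U, \bmu_n) \ar[r] & \Br(U)[n] \ar[r] & 0,
\end{tikzcd}
\]
whose right-hand column is an isomorphism by~(1). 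The restriction $\Pic(X) \to \Pic(U)$ is surjective because the pullback $\Pic(\bar U) \to \Pic(X)$ followed by restriction to $U$ recovers $\Pic(\bar U) \to \Pic(U)$, which is surjective by the Weil divisor excision sequence $\bigoplus_i \Z \cdot [E_i] \to \Pic(\bar U) \to \Pic(U) \to 0$. A diagram chase then shows that the middle column is surjective, so $[\scu]$ lifts to some $[\scx] \in \Ho^2_{\et}(X, \bmu_n)$, and the corresponding $\bmu_n$-gerbe $\scx \to X$ automatically has Brauer class $\alpha'$ by the uniqueness established in~(1).

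The main obstacle in this plan is the surjectivity of the left-hand column of the diagram, and the only nontrivial input beyond Lemma~\ref{lem:root_stack_brauer_class} is the observation that the tautological generators $\sco_X(E_i^{1/n})$ of $\Pic(X)/\Pic(\bar U)$ play no role once one restricts to $U$; the rest is bookkeeping in the Kummer sequence and a standard choice of SNC compactification in characteristic zero.
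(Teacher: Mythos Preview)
Your approach is essentially the same as the paper's: compactify with simple normal crossings boundary, pass to an iterated root stack, invoke Lemma~\ref{lem:root_stack_brauer_class} for (1), and run the Kummer-sequence diagram chase for (2), using surjectivity of $\Pic(\bar U) \to \Pic(U)$.

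Two small points deserve attention. First, you fix $n = \per(\alpha)$ when building $X$, but the statement quantifies over \emph{all} $\bmu_n$-gerbes of Brauer class $\alpha$; for $n$ a proper multiple of $\per(\alpha)$ the right-hand column $\Br(X)[n] \to \Br(U)[n]$ of your diagram is no longer an isomorphism. The chase still goes through, since you only need that $\alpha$ lies in the image (which it does, via $\alpha'$), but your write-up should not claim isomorphism there. Second, Lemma~\ref{lem:root_stack_brauer_class} produces $\alpha'$ only as a cohomological Brauer class in $\Ho^2_{\et}(X,\G_m)$, while the conclusion asks for $\alpha' \in \Br(X)$. The paper closes this by citing the Kresch--Vistoli covering theorem, which applies since the root stack $X$ is a global quotient stack with quasi-projective coarse space; you should add this step.
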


	\begin{proof}
	    Let $X_0$ be a smooth, projective variety over $k$ which compactifies $U$ such that $X_0 - U$ has simple normal crossings. We observe that $\Pic(X_0) \to \Pic(U)$ is surjective. By Lemma~\ref{lem:root_stack_brauer_class}, there is an iterated root stack $X \to X_0$ such that $\alpha$ extends to a cohomological Brauer class $\alpha' \in \Ho^2_{\et}(X, \G_m)[n]$. 

	    Let $\scu \to U$ be a $\bmu_n$-gerbe of Brauer class $\alpha$. Consider the commutative diagram
	    \[
	    	\begin{tikzcd}
	    		\Pic(X) \ar[r] \ar[d] & \Ho^2_{\et}(X, \bmu_n) \ar[r] \ar[d] & \Ho^2_{\et}(X, \G_m)[n] \ar[d] \\
	    		\Pic(U) \ar[r] & \Ho^2_{\et}(U, \bmu_n) \ar[r] & \Ho^2_{\et}(U, \G_m)[n].
	    	\end{tikzcd}
	    \]
	    The outer vertical morphisms are surjective, so $[\scu]$ lifts to a class $[\scx] \in \Ho^2_{\et}(X, \bmu_n)$. Finally, since $X$ is a global quotient stack with quasi-projective coarse space (\cite[Lemma 3.4]{geometricity_dm}),  the covering theorem of Kresch--Vistoli \cite{vistoli_kresch} implies that $\Ho^2_{\et}(X, \G_m)$ coincides with $\Br(X)$
	\end{proof}

\section{Hodge theory of categories} 
	\label{sec:hodge_theory_of_categories}

	Throughout this section, we work over the field $\C$ of complex numbers. In \S \ref{ssec:topological_k_theory}, we review Blanc's topological $K$-theory. In \ref{ssec:hodge_filtration}, we review Perry's construction of the Hodge structure on topological $K$-theory for geometric categories, and in \S \ref{ssec:variations_of_hodge_structure} we state the generalization to categories linear over a base.

	\subsection{Topological \textit{K}-theory}
	\label{ssec:topological_k_theory}

	In \cite{blanc}, Blanc constructs a functor 
	\[
		\ktop:\Cat_{\C} \longrightarrow \Sp
	\]
	from the $\infty$-category of $\C$-linear categories to the stable $\infty$-category of spectra, satisfying the following properties:

	\begin{enumerate} [label = \normalfont{(\roman*)}]
		\item \label{pt:colim} $\ktop$ commutes with filtered colimits.
		\item \label{pt:cofib} Given an exact sequence\footnote{A sequence of $\C$-linear categories is exact if it is both a fiber and a cofiber sequence.}
		\[
			\begin{tikzcd}
				\scc \ar[r] & \scc' \ar[r] & \scc''
			\end{tikzcd}
		\]
		of $\C$-linear categories, the resulting sequence
		\[
			\begin{tikzcd}
				\ktop(\scc) \ar[r] & \ktop(\scc') \ar[r] & \ktop(\scc'')
			\end{tikzcd}
		\]
		is an exact triangle in $\Sp$.

		\item \label{pt:chern_square} There is a commutative diagram
		\begin{equation} \label{eq:chern_square}
			\begin{tikzcd}
				\K(\scc) \ar[r, "\ch"] \ar[d] & \mathrm{HN}(\scc) \ar[d] \\
				\ktop(\scc) \ar[r, "\ch"] & \mathrm{HP}(\scc)
			\end{tikzcd}
		\end{equation}
		in $\Sp$, functorial in $\scc$.
		\item If $X$ is a separated scheme of finite type over $\C$, then $\ktop(\dperf(X))$ may be identified with the complex topological $K$-theory $\ktop_0(X)$, and \ref{pt:chern_square} may be identified (functorially in $X$) with the corresponding diagram
		\[
			\begin{tikzcd}
				\K(X) \ar[r, "\ch"] \ar[d] & \HN(X) \ar[d] \\
				\ktop(X^{\an}) \ar[r, "\ch"] & \HP(X)
			\end{tikzcd}
		\]
		of Chern characters for $X$.
	\end{enumerate}

	For our purposes, it is enough to understand $\ktop(\scc)$ in the case when $\scc$ occurs as a semiorthogonal component of a category of the form $\dperf(X)$, when $X$ is a scheme or a Deligne--Mumford stack. From \ref{pt:cofib}, the functor $\ktop$ satisfies the following additivity property: Given a semiorthogonal decomposition
	\[
	 	\scc = \langle \scc_1, \scc_2, \dots, \scc_n \rangle
	 \] 
	of a $\C$-linear category $\scc$, there is an equivalence
	\[
		\ktop(\scc) \cong \ktop(\scc_1) \oplus \ktop(\scc_2) \oplus \cdots \oplus \ktop(\scc_n)
	\]
	induced by the projections $\scc \to \scc_i$. 

	\begin{remark}
	\label{rem:euler_pairing}
		Let $\scc$ be a \emph{proper} $\C$-linear category. Perry \cite[Lemma 5.2]{Perry_CY2} constructs an \emph{Euler pairing} 
		\[
			\chi^{\topo}(-,-):\ktop_i(\scc) \otimes \ktop_i(\scc) \to \Z,
		\]
		which satisfies the following properties:
		\begin{enumerate} [label = (\arabic*)]
			\item If $\scc$ admits a semiorthogonal decomposition, then the resulting splitting of $\ktop_i(\scc)$ is semiorthogonal for the Euler pairing. 
			\item If $\chi(-, -)$ is the Euler pairing on $\K_0(\scc)$ given by
			\[
				\chi(v, w) = \sum_i (-1)^i \dim \Ext_{\C}^i(v, w),
			\]
			then the homomorphism $\K_0(\scc) \to \ktop_0(\scc)$ preserves Euler pairings. 
			\item If $\scc = \dperf(X)$ for a proper scheme $X$ over $\C$, then for $v, w \in \ktop_i(X)$,
			\[
				\chi^{\topo}(v, w) = s_*(v^{\vee} \otimes w) \in \ktop_{2i}(\Spec \C) \simeq \Z,
			\]
			where $s:X \to \Spec(\C)$ is the structure morphism. 
		\end{enumerate}
	\end{remark}

	Blanc's topological $K$-theory satisfies a localization sequence:

	\begin{lemma}
	\label{lem:excision_sequence}
	Let $i:Z \to X$ be a closed immersion of algebraic stacks over $\C$, with complement $j:U \to X$. There is an exact triangle
	\[
		\begin{tikzcd}
			\ktop(\dcoh(Z)) \ar[r, "i_*"] & \ktop(\dcoh(X)) \ar[r, "j^*"] & \ktop(\dcoh(X - Z)).
		\end{tikzcd}
	\]
	\end{lemma}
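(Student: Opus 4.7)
The plan is to exhibit
\[
\dcoh(Z) \xrightarrow{i_*} \dcoh(X) \xrightarrow{j^*} \dcoh(U)
\]
as an exact sequence of $\C$-linear stable $\infty$-categories, after which the conclusion follows immediately by applying Blanc's functor $\ktop$ and invoking its property (ii) from \S\ref{ssec:topological_k_theory}. All of the work therefore takes place at the level of categories, and the task splits naturally into identifying the kernel and the cofiber of $j^*$.

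Let $\scd \subset \dcoh(X)$ denote the full subcategory spanned by bounded complexes whose cohomology sheaves are set-theoretically supported on $Z$. Since $j^*$ is $t$-exact, the kernel of $j^*$ is precisely $\scd$. I would then verify that $j^*$ induces an equivalence $\dcoh(X)/\scd \xrightarrow{\sim} \dcoh(U)$, where the quotient is taken in $\Cat_{\mathrm{st}}$; essential surjectivity reduces to the extension of coherent sheaves from $U$ to $X$, which holds for noetherian algebraic stacks, and fully faithfulness is a routine manipulation of Hom complexes modulo morphisms factoring through $\scd$.

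The remaining step, which I expect to be the main obstacle, is to show that pushforward induces an equivalence $i_*: \dcoh(Z) \xrightarrow{\sim} \scd$. The standard devissage argument shows that any coherent sheaf on $X$ supported on $Z$ admits, locally, a finite filtration whose graded pieces are annihilated by the ideal sheaf $\sci_Z$ and hence are pushed forward from $Z$; assembling these filtrations and passing to the bounded derived category produces the equivalence. Since closed immersions are representable, this argument reduces to the corresponding statement for schemes by smooth descent on $X$. Applying $\ktop$ to the resulting exact sequence of $\C$-linear categories yields the exact triangle of spectra stated in the lemma.
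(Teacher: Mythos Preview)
Your approach is correct and more direct than the paper's. You establish the exact sequence $\dcoh(Z) \to \dcoh(X) \to \dcoh(U)$ at the categorical level and then invoke property~(ii) of $\ktop$ as a black box. The paper instead unfolds Blanc's construction: it passes to the presheaf of algebraic $K$-theory spectra $V \mapsto \K(\dcoh(-) \otimes_{\C} \dperf(V))$ on smooth affine $\C$-schemes, invokes the base-change formula $\dcoh(X) \otimes_{\C} \dperf(V) \simeq \dcoh(X \times_{\C} V)$ of Drinfeld--Gaitsgory to identify this with a presheaf of $G$-theory spectra, applies the localization sequence for $G$-theory of algebraic stacks to obtain a fiber sequence of presheaves, and then observes that this survives geometric realization and Bott inversion. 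Your route avoids the base-change input entirely and treats $\ktop$ abstractly as a localizing invariant; the paper's route is more hands-on and, as it notes, is borrowed from a preprint version of Halpern-Leistner--Pomerleano. One small point worth making explicit in your write-up: to apply property~(ii) you need the sequence to be exact in $\Cat_{\C}$, i.e.\ both a fiber and a cofiber sequence, so you should note that the Verdier quotient $\dcoh(X)/\scd$ is already idempotent-complete and hence coincides with $\dcoh(U)$ on the nose, not merely after idempotent completion.
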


	The following argument appeared in a preprint version of \cite{equiv_hodge}, and we include it here for completeness. Note that it exploits details of Blanc's construction (cf. \cite[Def. 1.2]{blanc}), which we have not described.

	\begin{proof}
		For a $\C$-linear category $\scc$, consider the presheaf of spectra 
		\[
			\sK(\scc)(U) = \sK(\scc \otimes_{\C} \dperf(U))
		\]
		on the category of smooth, affine schemes over $\C$. 

		For any smooth, affine scheme $U$, and any algebraic stack $X$, there is an equivalence
		\[
			\dcoh(X) \otimes_{\C} \dperf(U) \cong \dcoh(X \times_{\C} U)
		\]
		by \cite[Corollary 4.2.3]{drinfeld_gaitsgory}. From the localization sequence for $G$-theory of algebraic stacks, it follows that there is a fiber sequence of presheaves of spectra
		\begin{equation} \label{eq:localization_alg}
			\begin{tikzcd}
				\sK(\dcoh(Z)) \ar[r] & \sK(\dcoh(X)) \ar[r] & \sK(\dcoh(U))
			\end{tikzcd}
		\end{equation}
		on the category of smooth, affine schemes over $\C$. Finally, \eqref{eq:localization_alg} remains a fiber sequence after geometric realization and inverting the Bott element. 
	\end{proof}

	\begin{example}
		Let $X$ be a smooth, projective Deligne--Mumford stack over $\C$, and suppose that $X = [Y/G]$, where $Y$ is quasi-projective and $G$ is reductive. According to a result of Halpern-Leistner and Pomerleano, one may identify $\ktop(\dperf(X))$ with the equivariant topological $K$-theory $\ktop_G(Y^{\an})$ \cite[Theorem 2.10]{equiv_hodge}.
	\end{example}

	\begin{example}
		Let $X$ be a separated scheme of finite type over $\C$, and $\alpha \in \Br(X)$. Moulinos \cite{moulinos} constructs an equivalence 
		\[
			\ktop(\dperf(X, \alpha)) \cong \KU^{\bar \alpha}(X^{\an}),
		\]
		where $\bar \alpha \in \Ho^3(X^{\an}, \Z)^{\tors}$ is the topological Brauer class associated to $\alpha$, and $\KU^{\bar \alpha}(X^{\an})$ is the $\bar \alpha$-twisted topological $K$-theory spectrum of $X$ in the sense of Atiyah--Segal \cite{atiyah_segal}.
	\end{example}

	\begin{remark} \label{rem:descent_for_ktop}
		Let $\scc$ be a $\C$-linear category. According to a result of Antieau--Heller \cite{antieau_heller}, the functor
		\[
			T \mapsto \ktop(\dperf(T) \otimes_{\C} \scc)
		\]
		satisfies \'etale hyperdescent on the site of smooth, separated schemes over $\C$. In the case $\scc = \dperf(\Spec \C)$, one recovers the fact that topological $K$-theory satisfies \'etale hyperdescent. 
	\end{remark}

	\begin{variant} \label{relative_moulinos}
	There is a relative version of Blanc's construction, due to Moulinos \cite{moulinos}. Let $S$ be a separated scheme of finite type over $\C$. There is a functor
	\begin{equation} \label{eq:relative_blanc}
		\ktop(-/S):\Cat_S \longrightarrow \Shv_{S^{\an}}(\Sp),
	\end{equation}
	valued in sheaves of spectra on $S^{\an}$, satisfying the following properties:
	\begin{enumerate} [label = \normalfont{(\roman*)}]
		\item When $S = \Spec \C$, $\ktop(\scc/S)$ is the constant sheaf associated to $\ktop(\scc)$.
	\item If $\scc$ admits an $S$-linear semiorthogonal decomposition 
	\[
		\scc = \langle \scc_1, \scc_2, \dots, \scc_n \rangle,
	\]
	then there is a decomposition
	\[
		\ktop(\scc/S) = \ktop(\scc_1/S) \oplus \ktop(\scc_2/S) \oplus \cdots \oplus \ktop(\scc_n/S).
	\]
	\item If $f:X \to S$ is a proper morphism of separated schemes over $\C$, then $\ktop(\dperf(X)/S)$ may be identified (functorially in $S$) with the sheaf of spectra $U \mapsto \ktop((f^{\an})^{-1}(U))$.
	\end{enumerate}
	\end{variant}

	\subsection{The Hodge filtration} 
	\label{ssec:hodge_filtration}

	Let $\scc$ be a $\C$-linear category. In the terminology of Orlov~\cite{orlov_gluing}, $\scc$ is \emph{geometric} if it arises as a semiorthogonal component of $\dperf(X)$, for a smooth, proper scheme $X$ over $\C$. Examples of $\scc$-linear categories include $\dperf(X)$ for smooth, proper Deligne--Mumford stacks \cite{geometricity_dm}, and, consequently, $\dperf(X, \alpha)$ where $X$ is a smooth, proper Deligne--Mumford stack, and $\alpha \in \Ho^2(X, \G_m)$.

	\begin{construction} \label{constr:nc_hodge}
		Let $\scc$ be a geometric $\C$-linear category. For each $i \in \Z$, the morphism from \ref{pt:chern_square}
		\[
			\ktop_i(\scc) \otimes \C \longrightarrow \HP_i(\scc)
		\]
		is an isomorphism, the noncommutative Hodge-to-de Rham spectral sequence
		\begin{equation} \label{eq:nc_hodge_ss}
			\HH_*(\scc)[u^{\pm 1}] \implies \HP_*(\scc)
		\end{equation}
		degenerates \cite{kaledin}, and the resulting filtration endows $\ktop_i(\scc)$ with a pure Hodge structure of weight $-i$. 
	\end{construction}

	\begin{example}
		Let $X$ be a smooth, proper scheme over $\C$, and let $i \in \Z$. Identifying $\ktop_i(X)$ with $\ktop_i(\dperf(X))$, the result of Construction~\ref{constr:nc_hodge} is the unique integral Hodge structure on $\ktop_i(X)$ such that the Chern character homomorphism
		\[
			\ktop_i(X) \otimes \Q \to \bigoplus_{k} \Ho^{2k - i}(X^{\an}, \Q)(k)
		\]
		is an isomorphism of rational Hodge structures.
	\end{example}

	\begin{remark}
		If $X$ is a smooth, proper Deligne--Mumford stack over $\C$, then the Hodge structure on $\ktop_i(\dperf(X))$ is not necessarily pulled back from the Hodge structures on the rational cohomology of $X$ through a Chern character. For example, when $X = \B G$ for a finite cyclic group $G$, then $\ktop_0(\dperf(\B G))$ has rank $n$, whereas $\Ho^{2*}(X^{\an}, \Q)$ has rank $1$.
	\end{remark}

	For a geometric category $\scc$, we write $\Hdg(\scc, \Z)$ for the group of integral Hodge classes in $\ktop_0(\scc)$. From considering the case of $\dperf(X)$, it follows that there is a factorization
	\[
		\K_0(\scc) \longrightarrow \Hdg(\scc, \Z) \subset \ktop_0(\scc).
	\]
	Perry formulates a noncommutative integral Hodge conjecture \cite[Conjecture 5.13]{Perry_CY2}:

	\begin{conjecture}[Perry]
		\label{conj:ihc}
	    Let $\scc$ be a geometric $\C$-linear category. The homomorphism
	    \[
	    	\K_0(\scc) \to \Hdg(\scc, \Z)
	    \]
	    is surjective. 
	\end{conjecture}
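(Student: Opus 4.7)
The plan is to attempt a reduction to the classical integral Hodge conjecture on an ambient smooth projective variety. Since $\scc$ is geometric, fix an embedding $\scc \subset \dperf(X)$ with semiorthogonal complement $\scc^{\perp}$ in some smooth projective $X$. By the additivity of $\ktop$ under semiorthogonal decompositions (a consequence of \ref{pt:cofib}), together with the compatibility of Construction \ref{constr:nc_hodge} with such decompositions, one obtains a splitting
\[
    \ktop_0(X) \cong \ktop_0(\scc) \oplus \ktop_0(\scc^{\perp})
\]
of pure weight-zero Hodge structures, compatible with the corresponding decomposition of $\K_0$. Hence any Hodge class in $\ktop_0(\scc)$ is the projection of a Hodge class in $\ktop_0(X)$, and surjectivity of $\K_0(X) \to \Hdg(X, \Z)$ would imply the same for $\scc$. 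This reduces the general conjecture to the case $\scc = \dperf(X)$, which is essentially a $K$-theoretic refinement of the integral Hodge conjecture for $X$ in all codimensions.

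For $\dperf(X)$, I would proceed by dimension. In dimension $\leq 1$ the conjecture is essentially formal; in dimension $2$ the strategy of the paper's Theorem \ref{thm:ihc_dm_intro} is a natural model, factoring through a reduction to a twisted case $\dperf(Y, \alpha)$ on an orbifold $Y$ via total rigidification (Example \ref{ex:total_rig}) and Corollary \ref{cor:non-abelian_gerbe_cover}, followed by an application of Theorem \ref{thm:rational_b_field_intro} to translate the problem to an explicit computation in a twisted Mukai structure. In that Mukai-theoretic description, a Hodge class in $\ktop_0(\dperf(Y, \alpha))$ is encoded by $v \in \ktop_0(Y)$ with $\exp(B) \cdot \ch(v)$ Hodge in each degree, and algebraicity reduces to two ingredients: (a) construction of a twisted sheaf realizing the prescribed rank, and (b) correction of the lower Chern classes by algebraic classes pulled back from $\K_0(Y)$. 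Step (b) is controlled by the classical integral Hodge conjecture for $Y$; step (a) is the period-index input, closed for surfaces by de Jong's theorem.

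The hardest step, which I do not expect to circumvent in general, is the construction of algebraic twisted sheaves realizing a prescribed Mukai class in step (a). In dimension $\geq 3$ this is precisely the period--index problem, open in general, and the Severi--Brauer counterexamples \ref{thm:gabber_intro} and \ref{thm:sb_abelian_threefold} show that $\hind(\alpha)$ can be strictly smaller than $\ind(\alpha)$, obstructing any purely Hodge-theoretic construction. Consequently, in its full stated form Conjecture \ref{conj:ihc} appears to require genuinely new tools for producing objects in twisted derived categories beyond what is presently available; a realistic target is to establish it for restricted classes (Calabi--Yau categories, categories arising from low-dimensional orbifolds, or categories over $X$ for which the classical IHC in all codimensions is known) rather than in complete generality, with the paper's Theorem \ref{thm:ihc_dm_intro} serving as a template.
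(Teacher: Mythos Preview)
The statement you are attempting to prove is a \emph{conjecture}, not a theorem, and the paper contains no proof of it. Immediately after stating Conjecture~\ref{conj:ihc}, the paper explicitly notes that it is \emph{known to fail}: Koll\'ar's hypersurfaces in $\P^4$ for which the classical integral Hodge conjecture fails give counterexamples to Conjecture~\ref{conj:ihc} for $\scc = \dperf(X)$. Moreover, the very results you cite (Theorems~\ref{thm:gabber_intro} and~\ref{thm:sb_abelian_threefold}) are themselves counterexamples to Conjecture~\ref{conj:ihc} for categories of the form $\dperf(X,\alpha)$. So there is no proof to compare against, and your proposal cannot succeed in full generality because the statement is false.

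Your reduction from a general geometric $\scc$ to an ambient $\dperf(X)$ is correct and is exactly the mechanism the paper uses (via the decomposition $\rV(\scc) = \bigoplus_i \rV(\scc_i)$). However, your subsequent description of the case $\scc = \dperf(X)$ as ``essentially a $K$-theoretic refinement of the integral Hodge conjecture for $X$ in all codimensions'' understates the gap: as the paper notes in Remark~\ref{rem:implications_ihc}, in general there are no implications in either direction between the classical integral Hodge conjecture for $X$ and the integral Hodge conjecture for $\dperf(X)$, so your dimension-by-dimension strategy does not straightforwardly reduce to classical results even where those are available. Your closing paragraph essentially acknowledges all of this, so the proposal is better read as a commentary on the difficulty of the conjecture than as a proof attempt.
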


	As with the integral Hodge conjecture for the integral cohomology of a smooth, proper variety, Conjecture~\ref{conj:ihc} is known to fail in certian examples. As Perry notes, if $X$ is a hypersurface in $\P^4$ for which the integral Hodge conjecture fails \cite{trento}, then Conjecture~\ref{conj:ihc} fails for $\dperf(X)$.

	\begin{remark} \label{rem:implications_ihc}
		If the Atiyah--Hirzebruch spectral sequence relating $\ktop_*(X)$ with $\Ho^*(X^{\an}, \Z)$ degenerates (for example, if the cohomology ring $\Ho^*(X^{\an}, \Z)$ is torsion-free) then the integral Hodge conjecture in all degrees for $X$ implies the integral Hodge conjecture for $\Dperf(X)$, as one may see from the proof of \cite[Prop. 5.16]{Perry_CY2}. In general, however, there do not appear to be implications in either direction between the integral Hodge conjecture for $X$ and the integral Hodge conjecture for $\dperf(X)$.
	\end{remark}
	
	It is often useful to measure the failure of the integral Hodge conjecture using the \emph{Voisin group},
	\[
		\rV(\scc) = \Coker \left( \K_0(\scc) \to \Hdg(\scc, \Z) \right).
	\]
	The integral Hodge conjecture holds for $\scc$ if and only if $\rV(\scc) = 0$. 

	The most important property of the integral Hodge conjecture for categories is its compatibility with semiorthogonal decompositions: If $\scc$ admits a semiorthogonal decomposition
	\[
		\scc = \langle \scc_1, \scc_2, \dots, \scc_n \rangle,
	\]
	then there is a decomposition
	\[
		\rV(\scc) = \rV(\scc_1) \oplus \rV(\scc_2) \oplus \cdots \oplus \rV(\scc_n).
	\]
	In particular, the integral Hodge conjecture holds for $\scc$ if and only if it holds for each $\scc_i$.

	\subsection{Variations of Hodge structure}
	\label{ssec:variations_of_hodge_structure}

	Let $S$ be a separated scheme of finite type over $\C$, and $f: X \to S$ be a smooth, proper morphism. Suppose that there is an $S$-linear semiorthogonal decomposition 
	\[
		\dperf(X) = \langle \scc_1, \scc_2, \dots, \scc_n \rangle.
	\]
	For each $i \in \Z$, there is a splitting 
	\begin{equation} \label{eq:split_VHS}
		\ktop_i(X/S) = \ktop_i(\scc_1/S) \oplus \ktop_i(\scc_2/S) \oplus \cdots \oplus \ktop_i(\scc_n/S),	
	\end{equation}
	and combining Construction \ref{constr:nc_hodge} with Variant~\ref{relative_moulinos}, one may show that \eqref{eq:split_VHS} is a splitting of pure variations of Hodge structure.

	\section{Twisted Mukai structures}
	\label{sec:twisted_mukai_structures}

	In \S \ref{ssec:topologically_trivial_brauer_classes}, we review the notion of a topologically trivial Brauer class. In \S \ref{ssec:severi_brauer_varieties}, we prove the key input (Proposition~\ref{prop:key_input}) on the topological $K$-theory of Severi--Brauer varieties. In \S \ref{ssec:twisted_mukai}, we prove the main result on twisted Mukai structures for smooth, proper varieties. In \S \ref{ssec:variation_of_twisted_mukai}, we prove a generalization to variations of twisted Mukai structures over a base. In \S \ref{ssec:quasi_proj}, we develop a version of twisted Mukai structures for smooth, quasi-projective varieties. In \S \ref{ssec:computations_on_quasi_proj}, we prove auxiliary results on twisted Mukai structures on quasi-projective varieties, which will be used later.

	\subsection{Topologically trivial Brauer classes} 
	\label{ssec:topologically_trivial_brauer_classes}

	For an integer $n$, we write $\Z(n) = (2 \pi i)^n \Z$.

	\begin{definition}
	\label{def:sep_scheme_finite_type}
	    Let $X$ be a separated scheme of finite type over $\C$, and let $\alpha \in \Ho^2_{\et}(X, \G_m)^{\tors}$ be a cohomological Brauer class. Then $\alpha$ is \emph{topologically trivial} if it lies in the kernel of the composition
	    \[
	    	\Ho^2_{\et}(X, \G_m) \to \Ho^2(X^{\an}, \G_m^{\cont}) \to \Ho^3(X^{\an}, \Z(1)),
	    \]
	    where $\G^{\cont}_m$ is the sheaf of invertible continuous functions on $X^{\an}$, and the right-hand morphism arises from the continuous exponential sequence.
	\end{definition}

	\begin{remark}
		If $X$ is proper, then $\Ho^2_{\et}(X, \G_m)^{\tors}$ is isomorphic to $\Ho^2(X^{\an}, \sco_{X^{\an}}^\times)^{\tors}$. From the exponential sequence for the complex-analytic space $X^{\an}$, there is an exact sequence
		\[
			\begin{tikzcd}
				\Ho^2(X^{\an}, \Q(1)) \ar[r] & \Ho^2_{\et}(X, \G_m)^{\tors} \ar[r] & \Ho^3(X^{\an}, \Z(1))^{\tors},
			\end{tikzcd}
		\]
		so $\alpha \in \Ho^2_{\et}(X, \G_m)^{\tors}$ is topologically trivial if and only if $\alpha$ lies in the image of $\Ho^2(X^{\an}, \Q(1))$.
	\end{remark}

	It is often convenient to phrase topological triviality at the level of $\bmu_n$-gerbes:

	\begin{definition}
	\label{def:essentially_topologically_trivial}
	    Let $X$ be a separated scheme of finite type over $\C$, and let $\scx \to X$ be a $\bmu_n$-gerbe, for $n > 0$. 
	    \begin{enumerate} [label = (\arabic*)]
	    	\item  If the $\G_m$-gerbe associated to $\scx$ is trivial, then $\scx$ is \emph{essentially trivial}.
	    	\item If the topological $\G_m$-gerbe associated to $\scx^{\an}$ is trivial, then $\scx$ is \emph{essentially topologically trivial}. 
	    \end{enumerate}
	\end{definition}

 	We observe that $\scx$ is essentially trivial if and only if $[\scx]$ lies in the kernel of the homomorphism 
	\[
	\Ho^2_{\et}(X, \bmu_n) \to \Ho^2_{\et}(X, \G_m).
	\] 
	Similarly, $\scx$ is essentially topologically trivial if and only if $[\scx]$ lies in the kernel of the connecting homomorphism
	    \[
	    	\Ho^2(X^{\an}, \bmu_n) \to \Ho^3(X^{\an}, \Z(1))
	    \]
	arising from the sequence 
	    \[
	    	\begin{tikzcd}[column sep=large]
	    		\Z(1) \ar[r, "n"] & \Z(1) \ar[r, "\exp(-/n)"] & \bmu_n.
	    	\end{tikzcd}
	    \]
	Equivalently, $[\scx]$ lies in the image of an element $v \in \Ho^2(X^{\an}, \Z(1))$. 

	\begin{remark}
		Let $\scx \to X$ be a $\bmu_n$-gerbe. By a \emph{topological line bundle} on $\scx$, we mean a line bundle on the associated gerbe on the topological site of $X$. If $L$ is a topological line bundle on $\scx$, then $L^{\otimes n}$ descends to a topological line bundle on $X$, which we simply call $L^{\otimes n}$. 

		Similarly, we will consider $1$-twisted topological line bundles on $\scx$. If $\scx \to X$ is topologically essentially trivial, the set of $1$-twisted topological line bundles on $\scx$ is a torsor under the group of topological line bundles on $X$ 
	\end{remark}

	\begin{definition}
	\label{def:chern_character}
	    Let $X$ be a separated scheme of finite type over $\C$, and let $\scx \to X$ be a $\bmu_n$-gerbe. Let $L$ be a topological line bundle on $\scx$. 
	    \begin{enumerate} [label = (\arabic*)]
	    	\item The $K$-theory class $[L]$ of $L$ in $\ktop_0(X) \otimes \Q$ is the $n$th root $([L^{\otimes}])^{1/n}$ of the class of $L^{\otimes n}$.
	    	\item The \emph{first Chern class} of $L$ is $c_1(L) = c_1(L^{\otimes n})/n$, regarded as an element of $\Ho^2(X^{\an}, \Q(1))$.
	    	\item The \emph{Chern character} of $L$ is the image of $[L]$ under the Chern character:
		    \[
		    	\ch(L) = \exp(c_1(L)) \in \Ho^{2*}(X^{\an}, \Q(*)).
		    \]
	    \end{enumerate}	 
	\end{definition}

	\begin{lemma}
	\label{lem:exp_of_c_1}
	    Let $L$ be a $1$-twisted topological line bundle on $\scx$. The image of $c_1(L^{\otimes n})$ under 
	    \[
	    	\exp(-/n):\Ho^2(X^{\an}, \Z(1)) \to \Ho^2(X^{\an}, \bmu_n)
	    \]
	    is $[\scx]$.
	\end{lemma}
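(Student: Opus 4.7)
The plan is to prove the identity by a Čech-cocycle calculation, comparing the cocycles of $[\scx]$, $L$, and $c_1(L^{\otimes n})$ on a common good cover of $X^{\an}$.

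First I would fix a good open cover $\{U_i\}$ of $X^{\an}$ (e.g., one with contractible finite intersections) over which $\scx$ admits local trivializations. Relative to such trivializations, the class $[\scx] \in \Ho^2(X^{\an},\bmu_n)$ is represented by a Čech $2$-cocycle $\alpha_{ijk}\colon U_{ijk}\to\bmu_n$. A $1$-twisted topological line bundle $L$ on $\scx$ is then encoded by a $1$-cochain of continuous transition functions $l_{ij}\colon U_{ij}\to\G_m^{\cont}$ satisfying the twisted cocycle identity $l_{ij}\,l_{jk}\,l_{ki} = \alpha_{ijk}$, where $\bmu_n$ is viewed as a subgroup of $\G_m^{\cont}$ via the inclusion of $n$th roots of unity. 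Taking $n$th powers then gives an honest cocycle $l_{ij}^n$ representing the descended topological line bundle $L^{\otimes n}$ on $X$.

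Next, I would compute $c_1(L^{\otimes n}) \in \Ho^2(X^{\an},\Z(1))$ using its standard Čech description coming from the exponential sequence $0\to\Z(1)\to\underline{\C}\xrightarrow{\exp}\G_m^{\cont}\to 0$. Choose continuous logarithms $\tilde{l}_{ij}\colon U_{ij}\to\C$ with $\exp(\tilde{l}_{ij}) = l_{ij}$, which exist by contractibility of $U_{ij}$. Then $n\tilde{l}_{ij}$ is a logarithm of $l_{ij}^n$, and because $l_{ij}^n l_{jk}^n l_{ki}^n = 1$, the $2$-cochain $w_{ijk} := n(\tilde{l}_{ij} + \tilde{l}_{jk} + \tilde{l}_{ki})$ takes values in $\Z(1)$ and represents $c_1(L^{\otimes n})$.

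Finally, applying $\exp(-/n)$ to the representing cocycle yields $\exp(w_{ijk}/n) = \exp(\tilde{l}_{ij}+\tilde{l}_{jk}+\tilde{l}_{ki}) = l_{ij}\,l_{jk}\,l_{ki} = \alpha_{ijk}$, which is exactly the cocycle representing $[\scx]$. This proves $\exp(c_1(L^{\otimes n})/n) = [\scx]$. The argument is essentially formal; the only real care needed is in bookkeeping signs and the factor of $2\pi i$ in the identification $\Z(1) = 2\pi i\Z$, and in verifying that the Čech formula for $c_1$ used above agrees with the functorial connecting-homomorphism definition underlying Definition~\ref{def:chern_character}. Equivalently, this amounts to a naturality diagram-chase for the map of short exact sequences $(0\to\Z(1)\xrightarrow{n}\Z(1)\xrightarrow{\exp(-/n)}\bmu_n\to 0) \to (0\to\Z(1)\to\underline{\C}\xrightarrow{\exp}\G_m^{\cont}\to 0)$ whose middle vertical arrow is division by $n$.
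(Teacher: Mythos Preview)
Your Čech-cocycle argument is correct and complete; the bookkeeping with logarithms, the twisted cocycle condition $l_{ij}l_{jk}l_{ki}=\alpha_{ijk}$, and the final identification $\exp(w_{ijk}/n)=\alpha_{ijk}$ are all right.

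The paper's proof is different in presentation but not in substance: it simply cites the analogous algebraic statement in Lieblich's thesis and records the key geometric fact that, once a $1$-twisted line bundle $L$ exists, the topological $\bmu_n$-gerbe $\scx$ is identified with the gerbe of $n$th roots of the descended line bundle $L^{\otimes n}$, whose class is by definition $\exp(c_1(L^{\otimes n})/n)$. Your computation is essentially an explicit unwinding of that identification at the level of cocycles. The paper's route is shorter and more conceptual (and makes the link to the algebraic setting transparent), while yours is self-contained and avoids the external reference; neither adds any genuinely new idea over the other.
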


	\begin{proof}
		This is an analogue of \cite[Prop. 2.3.4.4]{lieblich_moduli} for the topological site, and the proof is identical. Briefly, if $L$ is a $1$-twisted topological line bundle on $\scx$, then the topological gerbe associated to $\scx$ is equivalent to the gerbe of $n$th roots of $L^{\otimes n}$.
	\end{proof}

	\subsection{Severi--Brauer varieties}
	\label{ssec:severi_brauer_varieties}

	Let $X$ be a separated scheme of finite type over $\C$. Let $\scx \to X$ be a $\bmu_n$-gerbe, and suppose that there exists a Severi--Brauer variety $\P \to X$ which represents the image of $[\scx]$ in $\Ho^2_{\et}(X, \G_m)$. In what follows, we frequently refer to the notation of Example~\ref{ex:bernardara}.

	Consider the pullback diagram
	\[
		\begin{tikzcd}
			\P_{\scx} \ar[d, "\pi'"] \ar[r, "f'"] & \P \ar[d, "\pi"] \\
			\scx \ar[r, "f"] & X.
		\end{tikzcd}
	\]
	Let $L$ be a $1$-twisted topological line bundle on $\scx$. Then $\pi'^* L \otimes \sco_{\P_{\scx}}(1)$	is a $0$-twisted topological line bundle on $\P_{\scx}$, so it descends to a line bundle on $\P$, which we simply call $L(1)$.

	\begin{proposition}
	 \label{prop:key_input}
	     In the setting above, multiplication by $L(1)$ on $\ktop_0(\P)$ induces an equivalence of spectra
	     \[
	     	\ktop(\dperf(\P)_0) \to \ktop(\dperf(\P)_1).
	     \]
	 \end{proposition}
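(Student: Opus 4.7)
My plan is to realize multiplication by $L(1)$ on $\ktop(\dperf(\P)_0)$ as a composition of equivalences factoring through $\ktop(\dperf^1(\scx))$, using the Fourier--Mukai equivalence of Lemma~\ref{lem:weighted_lb_lemma}. To begin, I would fix an algebraic $-1$-twisted line bundle $\sco_{\P_{\scx}}(1)$ on $\P_{\scx}/\scx$, so that Lemma~\ref{lem:weighted_lb_lemma} produces an $X$-linear equivalence $\Phi = \Phi_{\sco_{\P_{\scx}}(1)}:\dperf^1(\scx) \xrightarrow{\sim} \dperf(\P)_1$, which by functoriality of Blanc's construction yields an equivalence of spectra $\Phi_*$. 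Under the orthogonal decomposition $\ktop(\scx) = \bigoplus_k \ktop(\dperf^k(\scx))$ coming from Example~\ref{ex:mun_gerbe}, multiplication by the class $[L]$ of the $1$-twisted topological line bundle $L$ shifts the decomposition by one, so the composite $\otimes [L] \circ f^*:\ktop(X) \to \ktop(\dperf^1(\scx))$ is an equivalence. Similarly, $\pi^*:\ktop(X) \to \ktop(\dperf(\P)_0)$ is an equivalence identifying the $0$th Bernardara piece.

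The key step is a projection-formula computation at the level of $\ktop$. By definition of $L(1)$, the bundle $\pi'^* L \otimes \sco_{\P_{\scx}}(1)$ on $\P_{\scx}$ is topologically $0$-twisted for the $\bmu_n$-gerbe $f':\P_{\scx} \to \P$ and equals $(f')^* L(1)$. For $E \in \ktop(X)$, I would then compute
\[
\Phi_*(f^* E \cdot [L]) = (f')_*\!\left(\pi'^* f^* E \cdot (f')^* [L(1)]\right) = (f')_*(f')^*(\pi^* E \cdot [L(1)]) = \pi^* E \cdot [L(1)],
\]
using $\pi \circ f' = f \circ \pi'$, the projection formula, and the identity $(f')_*(f')^* = \mathrm{id}$ on $\ktop(\P)$ (which holds because $(f')^*$ lands in the $0$-twisted summand of $\ktop(\P_{\scx})$, on which $(f')_*$ is the inverse equivalence). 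Consequently, the composition
\[
\ktop(X) \xrightarrow{\otimes [L] \circ f^*} \ktop(\dperf^1(\scx)) \xrightarrow{\Phi_*} \ktop(\dperf(\P)_1)
\]
agrees with $\ktop(X) \xrightarrow{\pi^*} \ktop(\dperf(\P)_0) \xrightarrow{\otimes L(1)} \ktop(\P)$.

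Since the first composition is an equivalence landing in $\ktop(\dperf(\P)_1) \subseteq \ktop(\P)$, so is the second. Combined with the fact that $\pi^*$ is an equivalence, this shows that multiplication by $L(1)$ restricts to an equivalence $\ktop(\dperf(\P)_0) \xrightarrow{\sim} \ktop(\dperf(\P)_1)$, as required. I expect the main subtlety to be the interplay between the algebraic Fourier--Mukai kernel $\sco_{\P_{\scx}}(1)$, which is required in order to invoke Lemma~\ref{lem:weighted_lb_lemma}, and the merely topological line bundle $L$; this is reconciled by performing the projection-formula manipulations at the level of Blanc's $\ktop$, where both kinds of bundle act on equal footing as spectrum-level operations.
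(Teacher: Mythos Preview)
Your argument has a genuine gap at the step where you multiply by $[L]$ on $\ktop(\scx)$ and in the projection-formula manipulations on $\P_{\scx}$. The bundle $L$ is only a \emph{topological} $1$-twisted line bundle on the gerbe $\scx$; tensoring with $L$ is not a $\C$-linear functor on $\dperf(\scx)$, so it does not a priori induce a map on Blanc's $\ktop$ via functoriality. To make sense of ``$\otimes[L]:\ktop(\dperf^0(\scx))\to\ktop(\dperf^1(\scx))$'' and of the identities you invoke for $(f')_*$, $(f')^*$, you would need a comparison between Blanc's $\ktop(\dperf(\scx))$ (resp.\ $\ktop(\dperf(\P_{\scx}))$) and some topological or equivariant $K$-theory of the analytic gerbe, on which topological line bundles act. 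As the remark immediately following the proposition makes explicit, such a comparison is not available in the generality of the statement; the Severi--Brauer hypothesis is imposed precisely so that one can work entirely on schemes and avoid this issue. Your closing sentence asserts that the two kinds of bundle ``act on equal footing'' on Blanc's $\ktop$, but this is exactly what would need to be proved.

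The paper's proof sidesteps the gerbe altogether by staying on the schemes $\P$ and $\P^{(2)}=\P\times_X\P$, where Blanc's theory agrees with ordinary topological $K$-theory and multiplication by the topological line bundle $L(1)$ is unproblematic. It proceeds in two steps: first the essentially trivial case, where $L$ factors as an algebraic $1$-twisted bundle tensored with a topological bundle pulled back from $X$, so $L(1)$ differs from an algebraic tautological bundle by $\pi^*$ of a topological line bundle on $X$; then the general case, by pulling back along $\pi_2:\P^{(2)}\to\P$ (over $\P$ the gerbe becomes essentially trivial, reducing to step one) and using that $\pi_2^*$ realizes $\ktop(\dperf(\P)_k)$ as a retract of $\ktop(\dperf(\P^{(2)})^{\pi_1}_k)$.
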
 

	 \begin{proof}
	     First, assume that $\scx \to X$ is essentially trivial. We may write $L = L^{\alg} \otimes f^*L_0$, where $L^{\alg}$ is a $1$-twisted algebraic line bundle on $\scx$ and $L_0$ is a topological line bundle on $X$. Then $L(1) = L^{\alg}(1) \otimes \pi^* L_0$, and $L^{\alg}(1)$ is a tautological bundle on the projective bundle $\P \to X$, so the desired result is clear from Beilinson's semiorthogonal decomposition. 

	     In the general case, consider the pullback square
	     \[
	     	\begin{tikzcd}
	     		\P^{(2)} = \P \times_X \P \ar[r, "\pi_2"] \ar[d, "\pi_1"] & \P \ar[d] \\
	     		\P \ar[r] & X.
	     	\end{tikzcd}
	     \]
	     Let $\dperf(\P^{(2)})^{\pi_1}_k$ be the $k$th piece of the semiorthogonal decomposition of $\dperf(\P^{(2)})$ for the projective bundle structure from $\pi_1$. The bottom horizontal morphism of the commutative diagram
	     \[
	     	\begin{tikzcd}
	     		\ktop(\dperf(\P)_0) \ar[d, "\pi_2^*"] \ar[r, "L(1)"] & \ktop(\dperf(\P)_1) \ar[d, "\pi_2^*"] \\
	     		\ktop(\dperf(\P^{(2)})^{\pi_1}_0) \ar[r, "\pi_2^* L(1)"] & \ktop(\dperf(\P^{(2)})^{\pi_1}_1).
	     	\end{tikzcd}
	     \]
	     is an equivalence by the previous case, and moreover the vertical morphisms are inclusions of summands. Comparing with the analogous diagram for multiplication by $L(1)^{-1}$, we see that multiplication by $L(1)$ is an equivalence. 
	 \end{proof}

	 \begin{remark}
	 	The assumption in Proposition~\ref{prop:key_input} that the Brauer class of $\scx$ is represented by a Severi--Brauer variety is, we expect, inessential. However, we have imposed it in the absence of a comparison theorem between the topological $K$-theory of the gerbe $\scx$ and Blanc's $K$-theory in the case when $\scx$ is not a global quotient. 
	 \end{remark}

	\subsection{Twisted Mukai structures}
	\label{ssec:twisted_mukai}

	The goal of this section is to prove that the Hodge structures of twisted derived categories are given by twisted Mukai structures. This is stated as Corollary~\ref{cor:twisted_mukai-b_field} below.

	\begin{definition}
	\label{def:twisted}
	    Let $X$ be a smooth, proper variety over $\C$. Given a class $B \in \Ho^2(X^{\an}, \Q(1))$, the \emph{$B$-twisted Mukai structure} $\ktop_i(X)^B$ is the unique integral Hodge structure of weight $-i$ on $\ktop_i(X)$ such that the homomorphism
	    \[
	    	\ktop_i(X)^B \to \bigoplus_k \Ho^{2k - i}(X^{\an}, \Q(k)), \hspace{5mm} v \mapsto \exp(B) \cdot \ch(v)
	    \]
	    induces, after extension of scalars, an isomorphism between rational Hodge structures. We endow $\ktop_i(X)^B$ with an \emph{Euler pairing}, which by definition is the Euler pairing of Remark~\ref{rem:euler_pairing} on the underlying group $\ktop_i(X)$.
	\end{definition}

	\begin{theorem}
	\label{thm:ktop_of_gerbe}
	    Let $X$ be a smooth, proper variety over $\C$, and let $\scx \to X$ be an essentially topologically trivial $\bmu_n$-gerbe. Suppose that the cohomological Brauer class of $\scx$ lies in $\Br(X)$. Then for each $i$ and each $1$-twisted topological line bundle $L$ on $\scx$, there is an isomorphism of Hodge structures
	    \[
	    	\ktop_i(\dperf^1(\scx)) \to \ktop_i(X)^{c_1(L)},
	    \]
	    which is compatible with Euler pairings. 
	\end{theorem}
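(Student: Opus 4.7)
The plan is to reduce to Severi--Brauer geometry and apply Proposition~\ref{prop:key_input}. Since $[\scx] \in \Br(X)$, choose a Severi--Brauer variety $\pi:\P \to X$ of class $[\scx]$, and invoke Lemma~\ref{lem:weighted_lb_lemma} to obtain an $X$-linear equivalence $\dperf^1(\scx) \simeq \dperf(\P)_1$, where $\dperf(\P)_1$ is the degree-$1$ summand of Bernardara's semiorthogonal decomposition from Example~\ref{ex:bernardara}. Using $\dperf(\P)_0 \simeq \pi^*\dperf(X)$, I would work with the composition
\[
\Phi_L\,:\,\ktop_i(X) \xrightarrow{\,\pi^*\,} \ktop_i(\dperf(\P)_0) \xrightarrow{\,L(1)\cdot(-)\,} \ktop_i(\dperf(\P)_1) \simeq \ktop_i(\dperf^1(\scx)),
\]
which is an isomorphism of abelian groups by Proposition~\ref{prop:key_input}; the statement of the theorem is equivalent to showing that $\Phi_L$ (or its inverse) is an isomorphism of Hodge structures compatible with Euler pairings.

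Set $\xi := c_1(L(1)) \in \Ho^2(\P^{\an},\Z(1))$ and $B := c_1(L) \in \Ho^2(X^{\an},\Q(1))$. Composing $\Phi_L$ with the Chern character on $\P$ gives $v \mapsto \exp(\xi)\cdot \pi^*\ch(v)$, which I factor as
\[
\exp(\xi)\cdot \pi^*\ch(v) \;=\; \exp\bigl(\xi - \pi^* B\bigr)\cdot \pi^*\bigl(\exp(B)\cdot \ch(v)\bigr).
\]
By definition of the $B$-twisted Mukai structure, the inner arrow $v \mapsto \exp(B)\cdot \ch(v)$ is a $\Q$-Hodge isomorphism from $\ktop_i(X)^B \otimes \Q$ onto $\bigoplus_k \Ho^{2k-i}(X^{\an},\Q(k))$. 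So it remains to verify that $u \mapsto \exp(\xi - \pi^* B)\cdot \pi^* u$ is a morphism of $\Q$-Hodge structures whose image is exactly the summand $\ch_{\P}(\ktop_i(\dperf(\P)_1) \otimes \Q)$ of $\ch_{\P}(\ktop_i(\P) \otimes \Q)$.

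The key input is that $\xi - \pi^* B$ is a Hodge class in $\Ho^2(\P^{\an},\Q(1))$. I would prove this by pulling back along $f':\P_\scx \to \P$: since $L(1)$ is defined as the descent of $\pi'^* L \otimes \sco_{\P_{\scx}}(1)$ and $f:\scx \to X$ is a $\bmu_n$-gerbe (hence a rational cohomology isomorphism), one computes
\[
(f')^*\xi \;=\; \pi'^* B + h_{\P_\scx} \qquad\text{and}\qquad (f')^*\pi^* B \;=\; \pi'^* B,
\]
where $h_{\P_\scx}$ is the algebraic hyperplane class of the projective bundle $\P_\scx \to \scx$. Therefore $(f')^*(\xi - \pi^* B) = h_{\P_\scx}$ is Hodge, and since $f'$ is a $\bmu_n$-gerbe, $(f')^*$ is an isomorphism of rational Hodge structures, so $\xi - \pi^* B$ is Hodge on $\P$. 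Consequently, $u \mapsto \exp(\xi - \pi^* B)\cdot \pi^* u$ is a morphism of $\Q$-Hodge structures, and a Leray--Hirsch argument (valid because $\xi - \pi^* B$ still restricts to the hyperplane class on each fiber of $\pi$) matches its image with the summand $\ch_{\P}(\ktop_i(\dperf(\P)_1) \otimes \Q)$. Integrality of $\Phi_L$ (from Proposition~\ref{prop:key_input}) then upgrades the rational Hodge isomorphism to an isomorphism of integral Hodge structures.

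Compatibility with Euler pairings is a short computation: using Remark~\ref{rem:euler_pairing}(3), the projection formula, and $\pi_*[\sco_\P] = [\sco_X]$ in $\ktop_0(X)$ (which holds for a Severi--Brauer variety since it holds \'etale-locally on $X$), one obtains $\chi^{\topo}(\Phi_L v,\Phi_L w) = \chi^{\topo}(v,w)$. The main obstacle is the Hodge assertion for $\xi - \pi^* B$; the rest of the argument is essentially formal bookkeeping within the Leray--Hirsch presentation of $\Ho^*(\P^{\an},\Q)$ and the twisted Chern character.
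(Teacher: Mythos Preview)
Your proof is correct and follows essentially the same strategy as the paper: reduce to the Severi--Brauer setting via Lemma~\ref{lem:weighted_lb_lemma}, use Proposition~\ref{prop:key_input} for the integral isomorphism, and verify Hodge compatibility via the algebraic tautological bundle $\sco_{\P_{\scx}}(1)$. The paper packages the Hodge step more succinctly by observing that multiplication by the rational $K$-theory class $[\sco_{\P_{\scx}}(-1)]$ (Definition~\ref{def:chern_character}) is a rational Hodge isomorphism---your claim that $\xi - \pi^* B$ is Hodge is exactly the Chern-character translation of this---and thereby avoids the Leray--Hirsch digression, which is in any case unnecessary since the image of $\Phi_L$ is already known to be $\ktop_i(\dperf(\P)_1)$ from Proposition~\ref{prop:key_input}.
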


	\begin{proof}
			Let $\P$ be a Severi--Brauer variety which represents the image of $[\scx]$ in $\Br(X)$. In the notation of Proposition~\ref{prop:key_input}, consider the composition
			\[
				\begin{tikzcd}[column sep=large]
					\ktop_i(\dperf(\P)_0) \ar[r, "{[L(1)]}"] & \ktop_i(\dperf(\P)_1) \ar[r, "{[\sco_{\P_{\scx}}(-1)]}"] & \ktop_i(\dperf(\P)_0) \otimes \Q,
				\end{tikzcd}
			\]
			where, in the right-hand morphism, we use multiplication by the $K$-theory class of the twisted line bundle (Definition~\ref{def:chern_character}). 

			Since $\sco_{\P_{\scx}}(-1)$ is an algebraic twisted line bundle, the right-hand morphism induces an isomorphism of rational Hodge structures. On the other hand, by Proposition~\ref{prop:key_input}, the left-hand morphism is an isomorphism of abelian groups. It follows that the Hodge structure on $\ktop_i(\dperf(\P)_1))$ is identified with $\ktop_i(X)^{c_1(L)}$ by $[L(1)]$, and we conclude the theorem by Lemma~\ref{lem:weighted_lb_lemma}, which implies that $\ktop_i(\dperf(\P)_1)$ is isomorphic to $\ktop_i(\dperf^1(\scx))$ as a Hodge structure. The compatibility with Euler pairings follows from the observation that multiplication by $[L(1)]$ preserves the Euler pairing on $\ktop_i(\P)$.
	\end{proof}

	\begin{remark}
		We frequently use the observation that, in the case $i = 0$, the isomorphism constructed in the course of the proof preserves the rank homomorphisms
		\[
			\ktop_0(\dperf^1(\scx)) \to \Z, \hspace{5mm}  \ktop_0(X)^{c_1(L)} \to \Z
		\]
		on both sides. 
	\end{remark}

	\begin{definition}
	\label{def:rational_b_field}
	    Let $X$ be a smooth, proper variety over $\C$, and let $\alpha \in \Br(X)$. A \emph{rational $B$-field} for $\alpha$ is a class $B \in \Ho^2(X^{\an}, \Q(1))$ which maps to $\alpha$ under the exponential
	    \[
	    	\exp: \Ho^2(X^{\an}, \Q(1)) \to \Ho^2(X^{\an}, \sco_{X^{\an}}^\times)^{\tors} \simeq \Ho^2_{\et}(X, \G_m).
	    \]
	\end{definition}

	\begin{remark}
	\label{rem:rational_b_field}
		Let $\scx \to X$ be a $\bmu_n$-gerbe with cohomological Brauer class $\alpha$. If $L$ is a $1$-twisted topological line bundle on $\scx$, then $c_1(L^{\otimes n})/n$ is a rational $B$-field for $\alpha$. To prove this, observe that there is a commmutative diagram
		\[
			\begin{tikzcd}[column sep=large]
			 	\Ho^2(X^{\an}, \Z(1)) \ar[d, "-/n"] \ar[r, "\exp(-/n)"] &	\Ho^2(X^{\an}, \bmu_n) \ar[d] \\
			 	\Ho^2(X^{\an}, \Q(1)) \ar[r, "\exp"] & \Ho^2(X^{\an}, \sco_{X^{\an}}^{\times}),
			\end{tikzcd}
		\]
		and apply Lemma~\ref{lem:exp_of_c_1}. 
	\end{remark}

	\begin{corollary}	
	\label{cor:twisted_mukai-b_field}
	    Let $X$ be a smooth, proper variety over $\C$, and let $\alpha \in \Br(X)$ be a topologically trivial Brauer class. If $B$ is a rational $B$-field for $\alpha$, then for each $i$ there is an isomorphism of Hodge structures
	    \[
	    	\ktop_i(\dperf(X, \alpha)) \simeq \ktop_i(X)^B,
	    \]
	    which is compatible with Euler pairings. 
	\end{corollary}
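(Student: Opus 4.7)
The plan is to reduce the statement to Theorem~\ref{thm:ktop_of_gerbe} by producing a $\bmu_n$-gerbe on $X$ that realizes the given Brauer class and $B$-field. Since $\alpha$ has finite period and $B \in \Ho^2(X^{\an}, \Q(1))$ is rational, I would first choose an integer $n > 0$, a multiple of $\per(\alpha)$, such that $nB$ lies in the image of $\Ho^2(X^{\an}, \Z(1)) \to \Ho^2(X^{\an}, \Q(1))$, and I would fix an integral lift $v \in \Ho^2(X^{\an}, \Z(1))$ of $nB$. Let $\bar v \in \Ho^2(X^{\an}, \bmu_n)$ denote the image of $v$ under $\exp(-/n)$. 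Using the comparison between analytic and \'etale cohomology for the finite group scheme $\bmu_n$, the class $\bar v$ lifts uniquely to a class in $\Ho^2_{\et}(X, \bmu_n)$, giving an algebraic $\bmu_n$-gerbe $\scx \to X$.

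Next, I would verify that $\scx$ has the properties needed to apply Theorem~\ref{thm:ktop_of_gerbe}. The gerbe $\scx$ is essentially topologically trivial, since its class $\bar v$ lies in the image of $\Ho^2(X^{\an}, \Z(1))$. By the commutative diagram in Remark~\ref{rem:rational_b_field}, the image of $\bar v$ in $\Ho^2(X^{\an}, \sco_{X^{\an}}^{\times})$ is $\exp(B) = \alpha$, so the cohomological Brauer class of $\scx$ is $\alpha$; in particular it lies in $\Br(X)$ since, by Gabber--de~Jong, on a smooth proper complex variety the cohomological Brauer group agrees with the Azumaya Brauer group. To obtain a $1$-twisted topological line bundle $L$ on $\scx$ with prescribed first Chern class, I would choose a topological line bundle $M$ on $X^{\an}$ with $c_1(M) = v$ and identify $\scx^{\an}$ with the gerbe of $n$-th roots of $M$; this gerbe carries a tautological $1$-twisted line bundle $L$ with $L^{\otimes n} \simeq f^* M$, and therefore $c_1(L) = c_1(M)/n = v/n = B$.

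Applying Theorem~\ref{thm:ktop_of_gerbe} to the pair $(\scx, L)$ then yields an isomorphism of Hodge structures
\[
    \ktop_i(\dperf^1(\scx)) \simeq \ktop_i(X)^{c_1(L)} = \ktop_i(X)^B,
\]
compatible with Euler pairings. Finally, by Example~\ref{ex:dperf1}, $\dperf^1(\scx)$ is a twisted derived category over $X$ with class $[\scx] = \alpha$, hence $X$-linearly equivalent to $\dperf(X, \alpha)$, giving the desired isomorphism $\ktop_i(\dperf(X, \alpha)) \simeq \ktop_i(X)^B$.

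I do not expect any of these steps to present a serious obstacle: Theorem~\ref{thm:ktop_of_gerbe} has already done the substantive work of comparing the Hodge structure on $\ktop_i(\dperf^1(\scx))$ with a twisted Mukai structure. The only point to be careful about is the bookkeeping that ties together $B$, $v$, $\bar v$, $L$, and $[\scx]$ consistently, which is controlled by the compatibility diagram in Remark~\ref{rem:rational_b_field} and by Lemma~\ref{lem:exp_of_c_1}.
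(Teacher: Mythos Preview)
Your argument is correct and arrives at the same conclusion, but it takes a different route from the paper. The paper does not attempt to realize the given $B$ directly as $c_1(L)$ for a suitable gerbe and twisted line bundle. Instead, it observes that Remark~\ref{rem:rational_b_field} together with Theorem~\ref{thm:ktop_of_gerbe} already gives an isomorphism $\ktop_i(\dperf(X,\alpha)) \simeq \ktop_i(X)^{B'}$ for \emph{some} rational $B$-field $B'$, namely any $c_1(L)$ arising from a gerbe, and then proves that the isomorphism class of $\ktop_i(X)^{B}$ is independent of the choice of rational $B$-field for $\alpha$: if $B - B'$ lies in $\NS(X)_{\Q}$ the identity works, and if it lies in $\Ho^2(X^{\an}, \Z(1))$ one multiplies by the class of the corresponding topological line bundle. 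Your approach is more direct for the corollary as stated; the paper's approach has the advantage that the independence statement is itself a useful structural fact, and is in fact reused later (see the proof of Lemma~\ref{lem:depends_only_on_br}).

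One small correction: your appeal to Gabber--de~Jong is both unnecessary and not quite right. Their result that $\Br = \Br'$ requires an ample line bundle (or related hypotheses), not merely that $X$ be smooth and proper. But you do not need it: the hypothesis of the corollary already places $\alpha$ in $\Br(X)$, and your construction produces a $\bmu_n$-gerbe whose cohomological Brauer class is exactly $\alpha$, so the hypothesis of Theorem~\ref{thm:ktop_of_gerbe} is satisfied on the nose.
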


	\begin{proof}
			By Remark~\ref{rem:rational_b_field} and Theorem~\ref{thm:ktop_of_gerbe}, it is enough to show that the isomorphism class of the integral Hodge structure $\ktop_i(X)^B$ does not depend on the choice of the rational $B$-field $B$. If $B$ and $B'$ are two rational $B$-fields for $\alpha$, then from the exponential sequence, the difference $B - B'$ lies in $\Ho^2(X^{\an}, \Z(1)) + \NS(X)_{\Q}$. 

			If $B - B'$ lies in $\NS(X)_{\Q}$, then the identity map on $\ktop_i(X)$ induces an isomorphism of Hodge structures 
			\[
			 	\ktop_i(X)^B \to \ktop_i(X)^{B'}.
			 \] 
			Therefore, we may suppose that $x = B - B'$ lies in $\Ho^2(X^{\an}, \Z(1))$. In that case, the desired isomorphism of Hodge structures
			\[
				\ktop_i(X)^B \to \ktop_i(X)^{B'}
			\]
			is given by multiplication by $[L_x]$, where $L_x$ is a topological line bundle on $X$ with $c_1(L_x) = x$.
	\end{proof}

	\subsection{Variation of twisted Mukai structure}
	\label{ssec:variation_of_twisted_mukai}

	Let $f:X \to S$ be a smooth, proper morphism, and let $\scx \to X$ be a $\bmu_n$-gerbe. Let $\rP^0$ be the local system of topological line bundles on the fibers $X_s$ of $f$, and let $\rP^1$ be the local system of $1$-twisted topological line bundles on the fibers of $\scx \to S$. We observe that $\rP^1$ is a torsor under $\rP^0$.

	\begin{remark}
		The category of $\rP^0$-sheaves is symmetric monoidal, with the monoidal structure given by the \emph{contracted product} of $\rP^0$-sheaves $A \wedge B$, which is the sheafification of the quotient presheaf
	 \[
	 	A \times B/(a, b) \sim (ga, g^{-1} b).
	 \]
	 If $A$ is a $\rP^0$-module, and $B$ is a $\rP^0$-torsor, then $A \wedge B$ is a $\rP^0$-module, with addition given locally by
	 \[
		\begin{split}
		a \wedge b + a' \wedge b' &= a \wedge b + a' \wedge gb \\
			&= (a + ga') \wedge b,
	\end{split}
	\]
	where $b' = gb$ for a unique $g \in \rP^0$. 
	\end{remark}

	For $i \in \Z$, the local system $\ktop_i(X/S)$ is a $\rP^0$-module, and we may define a variation of Hodge structure on the $\rP^0$-module $\ktop_i(X/S) \wedge \rP^1$ as follows:

	\begin{definition}
	\label{def:variation_of_mukai_structure}
	    The \emph{variation of twisted Mukai structure} on $\ktop_i(X/S) \wedge \rP^1$ is the unique integral variation of Hodge structure such that the Chern character 
	    \[
	    	\ktop_i(X/S) \wedge \rP^1 \to \bigoplus_k \Ri^{2k -i} f_* \Q(k), \hspace{5mm} v \wedge L \mapsto \ch(L) \cdot \ch(v)
	    \]
	    induces an isomorphism between $\Q$-variations of Hodge structure.
	\end{definition}

	\begin{theorem}
	\label{thm:twisted_mukai_in_families}
	    Let $f:X \to S$ be a smooth, proper morphism, where $S$ is a separated scheme of finite type over $\C$, and let $\scx \to X$ be a $\bmu_n$-gerbe such that the fiber $\scx_s \to X_s$ is essentially topologically trivial for each $s \in S(\C)$. Assume that the cohomological Brauer class of $\scx$ lies in $\Br(X)$. Then there is an isomorphism of variations of Hodge structure
	    \[
	    	\ktop_i(\dperf^1(\scx)/S)  \to \ktop_i(X/S) \wedge \rP^1.
	    \]
	\end{theorem}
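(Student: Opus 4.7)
The plan is to sheafify the proof of Corollary~\ref{cor:twisted_mukai-b_field} using Moulinos' relative topological $K$-theory (Variant~\ref{relative_moulinos}), reducing the equivalence assertion pointwise to Theorem~\ref{thm:ktop_of_gerbe}, and interpolating the dependence on a chosen $1$-twisted topological line bundle via the contracted product with the torsor $\rP^1$.

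Fix a Severi--Brauer variety $\pi:\P \to X$ representing the cohomological Brauer class of $\scx$ in $\Br(X)$, and use the notation of \S\ref{ssec:severi_brauer_varieties}. The relative Bernardara decomposition (Example~\ref{ex:bernardara}), together with Variant~\ref{relative_moulinos}(ii), produces a decomposition of sheaves of spectra
\[
\ktop(\dperf(\P)/S) = \bigoplus_{k=0}^{n-1} \ktop(\dperf(\P)_k/S)
\]
on $S^{\an}$. Over any contractible analytic open $U \subset S$, the fiberwise essential topological triviality hypothesis, combined with the torsor structure of $\rP^1$ under the local system $\rP^0$, guarantees a section $L$ of $\rP^1|_U$, that is, a $1$-twisted topological line bundle on $\scx_U$. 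Form the topological line bundle $L(1) = \pi'^{*}L \otimes \sco_{\P_\scx}(1)$ on $\P_U^{\an}$. Composing the Fourier--Mukai equivalence $\ktop(\dperf^1(\scx)/S)|_U \simeq \ktop(\dperf(\P)_1/S)|_U$ from Lemma~\ref{lem:weighted_lb_lemma}, the inverse of multiplication by $L(1)$, and the Beilinson identification $\dperf(\P)_0 = \pi^{*}\dperf(X)$, we obtain a local equivalence
\[
\Phi_L: \ktop(\dperf^1(\scx)/S)|_U \longrightarrow \ktop(X/S)|_U.
\]
That multiplication by $L(1)$ is an equivalence of sheaves of spectra can be checked on stalks (which compute $\ktop(\dperf(\P_s)_k)$ by Variant~\ref{relative_moulinos}(iii) applied to the scheme $\P$ and projecting through the Bernardara decomposition), where it reduces to Proposition~\ref{prop:key_input}.

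The assignment $v \mapsto \Phi_L(v) \wedge L$ is independent of $L$: replacing $L$ by $L \otimes L_0$ for a section $L_0$ of $\rP^0|_U$ alters $\Phi_L$ by multiplication by $[L_0]$ on $\ktop(X/S)|_U$, and this change is absorbed by the defining identities of the contracted product $\ktop(X/S) \wedge \rP^1$. Hence the local equivalences glue to a global equivalence of sheaves of spectra
\[
\Phi: \ktop(\dperf^1(\scx)/S) \longrightarrow \ktop(X/S) \wedge \rP^1.
\]
Finally, to upgrade $\Phi$ to an isomorphism of integral variations of Hodge structure, endow the source with the relative nc Hodge structure from \S\ref{ssec:variations_of_hodge_structure}. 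At each $s \in S(\C)$, $\Phi$ restricts, via the chosen local section of $\rP^1$, to the Hodge isomorphism of Corollary~\ref{cor:twisted_mukai-b_field}. Since the Hodge filtration on $\ktop(\dperf^1(\scx)/S) \otimes \C$ is a holomorphic sub-bundle (by the relative nc Hodge-to-de Rham degeneration) and the twisted Mukai Chern character $u \wedge L \mapsto \ch(L) \cdot \ch(u)$ on the target is holomorphic by construction, the pointwise Hodge identifications assemble into an isomorphism of variations. The main subtlety is bookkeeping the contracted product on the sheaf level; the key simplification is that equivalence of sheaves, well-definedness of the global morphism, and compatibility with the Hodge filtration each reduce pointwise to the absolute case.
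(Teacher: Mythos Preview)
Your proof is correct and follows essentially the same strategy as the paper: choose a Severi--Brauer variety representing the Brauer class, use multiplication by $L(1)$ to identify $\ktop_i(\dperf(\P)_0/S)$ with $\ktop_i(\dperf(\P)_1/S)$, and reduce to the absolute case (Proposition~\ref{prop:key_input} and Theorem~\ref{thm:ktop_of_gerbe}) on stalks. The only notable difference is presentational: the paper writes the global map directly as a morphism of $\rP^0$-modules, $\Phi(v \wedge L) = [L(1)]\cdot v$, rather than gluing local trivializations, and it handles the Hodge compatibility by composing with $\Psi = [\sco_{\P_{\scx}}(-1)]\cdot(-)$, which is multiplication by an \emph{algebraic} twisted $K$-class and hence manifestly a morphism of rational variations of Hodge structure---this sidesteps your appeal to holomorphicity of the twisted Chern character, though both arguments are valid.
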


	\begin{proof}
	    Let $\P \to X$ be a Severi--Brauer variety which represents the image of $[\scx]$ in $\Br(X)$. Following the proof of Theorem~\ref{thm:ktop_of_gerbe}, we may consider a composition of morphisms of $\rP^0$-modules
	    \begin{equation} 
	    \label{eq:composition_relative_case}
	    	\begin{tikzcd}[column sep=large]
	    		\ktop_i(\dperf(\P)_0/S) \wedge \rP^1 \ar[r, "\Phi"] & \ktop_i(\dperf(\P)_1/S) \ar[r, "\Psi"] & \ktop_i(\dperf(\P)_0/S) \otimes \Q,
	    	\end{tikzcd}
	    \end{equation}
	    where:
	    \[
	    	\Phi(v \wedge L) = [L(1)] \cdot v, \hspace{5mm} \Psi(v) = [\sco_{\P_{\scx}}(-1)] \cdot v
	    \]
	    On stalks, \eqref{eq:composition_relative_case} recovers (after choosing a $1$-twisted topological line bundle) the analogous sequence in the proof of Theorem~\ref{thm:ktop_of_gerbe}. In particular, $\Phi$ is an isomorphism of local systems, and $\Psi$ induces an isomorphism of $\Q$-variations of Hodge structure.
	\end{proof}

	\begin{remark}
		In the context of Theorem~\ref{thm:twisted_mukai_in_families}, let $\V = \ktop_0(X/S) \wedge \rP^1$. Then $\V$ admits a rational section $\tau \in \Gamma(S^{\an}, \V) \otimes \Q$, whose restriction to the stalk at $s \in S(\C)$ is given by $[L^{\vee}] \wedge L$ for any $1$-twisted topological line bundle $L$ on $\scx_s$. Then $\tau$ is Hodge, since $\Psi(\Phi(\tau)) = [L] \cdot [L^{\vee}] = 1$. 

		By the theorem of the fixed part \cite[4.1.1]{deligne_hodge_ii}, $\Gamma(\V) = \Gamma(S^{\an}, \V)$ carries a natural Hodge structure, determined by the property that for each $s \in S(\C)$, the inclusion $\Gamma(\V) \to \V_s$ is a morphism of Hodge structures. Regarding $\Gamma(\V)$ as a $\ktop_0(X/S)$-module, the section $\tau \in \Gamma(\V) \otimes \Q$ determines an isomorphism of $\Q$-variations of Hodge structure
		\[
			\ktop_0(X/S) \otimes \Q \to \V \otimes \Q,
		\]
		and hence an isomorphism of rational Hodge structures 
		\[
			\Gamma(\ktop_0(X/S)) \otimes \Q \simeq \Gamma(\V) \otimes \Q.
		\]
		In particular, if $\mathbf{v} \in \Gamma(\V)$ is everywhere Hodge, then the image of $\mathbf{v}$ in $\Gamma(\V) \otimes \Q$ may be written $\mathbf{w} \cdot \tau$, where $\mathbf{w}$ is a global section of $\ktop_0(X/S) \otimes \Q$ which is everywhere Hodge.
	\end{remark}

	\subsection{Quasi-projective varieties}
	\label{ssec:quasi_proj}

	We now develop a theory of twisted Mukai structures on smooth, quasi-projective varieties. On the one hand, our treatment is rather ad hoc, but on the other, it is amenable to computation. The usefulness of the quasi-projective case comes from the following well-known lemma:

	\begin{lemma}
	\label{lem:bloch_kato}
	    Let $X$ be a smooth scheme over $\C$, and let $\alpha \in \Ho^2_{\et}(X, \G_m)$ be a cohomological Brauer class. There is a dense open subscheme $U$ of $X$ such that $\alpha_U$ is topologically trivial.
	\end{lemma}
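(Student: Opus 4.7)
The plan is to apply the Merkurjev--Suslin theorem at the generic point of $X$. Let $n$ denote the order of $\alpha$ in $\Ho^2_{\et}(X, \G_m)$. The Kummer sequence provides a surjection $\Ho^2_{\et}(X, \bmu_n) \to \Ho^2_{\et}(X, \G_m)[n]$, so we may lift $\alpha$ to a class $\tilde\alpha \in \Ho^2_{\et}(X, \bmu_n)$. Via Artin comparison, $\tilde\alpha$ determines a class in $\Ho^2(X^{\an}, \bmu_n)$, and unwinding the definition of topological triviality identifies the image of $\alpha$ in $\Ho^3(X^{\an}, \Z(1))$ with the Bockstein of $\tilde\alpha$ with respect to the short exact sequence
\[
\begin{tikzcd}[column sep=large]
0 \ar[r] & \Z(1) \ar[r, "n"] & \Z(1) \ar[r, "\exp(-/n)"] & \bmu_n \ar[r] & 0.
\end{tikzcd}
\]
Thus it suffices to find a dense open $U \subset X$ on which this Bockstein vanishes.

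By the Merkurjev--Suslin theorem, after fixing a primitive $n$-th root of unity to identify $\bmu_n^{\otimes 2} \cong \bmu_n$, the group $\Ho^2_{\et}(\Spec \C(X), \bmu_n)$ is generated by symbols. Consequently, there exist $f_i, g_i \in \C(X)^\times$ with $\tilde\alpha|_{\Spec \C(X)} = \sum_i \{f_i, g_i\}$. By spreading out, we may find a dense open $U \subset X$ on which the $f_i$ and $g_i$ are regular units and the equality $\tilde\alpha|_U = \sum_i \{f_i, g_i\}$ holds in $\Ho^2_{\et}(U, \bmu_n)$.

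It then remains to show that each symbol $\{f, g\}$ with $f, g \in \sco(U)^\times$ has vanishing Bockstein in $\Ho^3(U^{\an}, \Z(1))$. Viewed analytically, the symbol coincides with the mod $n$ reduction of the integral class $c_1(f) \cup c_1(g) \in \Ho^2(U^{\an}, \Z(1)^{\otimes 2})$, where the Tate twist has been trivialized via the fixed primitive root of unity. Because the Bockstein annihilates the image of the reduction-mod-$n$ map, the symbol has trivial Bockstein, as required. The main technical subtlety in writing this proof out cleanly is the bookkeeping of Tate twists, but over $\C$ this becomes harmless once a primitive root of unity has been chosen; the genuine mathematical content is precisely the surjectivity of the symbol map at the generic point furnished by Merkurjev--Suslin.
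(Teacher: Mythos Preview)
Your argument is correct. Both your proof and the paper's rest on Merkurjev--Suslin, but the packaging differs. The paper argues as follows: the topological Brauer class $\bar\alpha \in \Ho^3(X^{\an}, \Z(1))$ is torsion, and by a result of Colliot-Th\'el\`ene--Voisin the Zariski sheaf $\sch^3(\Z(1))$ (sheafifying $U \mapsto \Ho^3(U^{\an},\Z(1))$) is torsion-free; hence $\bar\alpha$ dies in $\Ho^0(X,\sch^3(\Z(1)))$, which says precisely that $\bar\alpha$ vanishes on a dense open. The torsion-freeness of $\sch^3(\Z(1))$ is itself equivalent to the degree-$2$ Bloch--Kato statement, i.e.\ Merkurjev--Suslin, so the paper is invoking the same input through the lens of unramified cohomology and the coniveau filtration.

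Your route is more explicit: you invoke Merkurjev--Suslin directly at the generic point, spread the symbol decomposition out to an open $U$, and then observe that each symbol is the mod-$n$ reduction of an integral cup product, hence has trivial Bockstein. This has the virtue of being self-contained and making the role of Merkurjev--Suslin transparent, at the cost of the Tate-twist bookkeeping you flag. The paper's version is terser and situates the lemma within the general Bloch--Ogus/coniveau picture, which is perhaps more illuminating if one already has that machinery at hand. Neither approach is strictly superior; yours is arguably the more elementary unpacking of the same content.
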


	\begin{proof}
			Let $\bar \alpha \in \Ho^3(X^{\an}, \Z(1))^{\tors}$ be the topological Brauer class of $\alpha$. According to \cite[Th\'eor\`eme 3.1]{CT_Voisin},  $\sch^3(\Z(1))$ is torsion-free, where $\sch^3(\Z(1))$ is the sheafification of the Zariski presheaf $U \mapsto \Ho^3(U^{\an}, \Z(1))$. In particular, $\bar \alpha$ lies in the kernel of the homomorphism
			\[
				\Ho^3(X^{\an}, \Z(1)) \to \Ho^0(X, \sch^3(\Z(1))).
			\]
			The right-hand side is the third unramified cohomology group $\Ho^3_{\mathrm{nr}}(X, \Z(1))$ of $X$, and the kernel consists of classes with coniveau $\leq 1$.
	\end{proof}

	Let $U$ be a smooth, separated variety over $\C$. Each singular cohomology group $\Ho^k(U^{\an}, \Q)$ carries a functorial mixed Hodge structure, with weights in the interval $[k, 2k]$. The lowest-weight part $\Wi_k \Ho^k(U^{\an}, \Q)$ is a pure Hodge structure of weight $k$, and may be described concretely as the image of the restriction map
	\[
		j^*:\Ho^k(X^{\an}, \Q) \to \Ho^k(U^{\an}, \Q)
	\]
	for any smooth compactification $j:U \to X$ \cite[3.2.16]{deligne_hodge_ii}.

	\begin{definition}
	\label{def:open_twisted_mukai}
	    Let $U$ be a smooth, separated scheme over $\C$, and let $B \in \Ho^2(U^{\an}, \Q(1))$. The \emph{$B$-twisted Mukai structure} $\wtop_i(U)^B$ is the Hodge structure of weight $-i$, given by the preimage of 
	    \begin{equation} \label{eq:lowest_weight_sum_twist}
	    	\bigoplus_k \Wi_{-i}(\Ho^{2k - i}(U^{\an}, \Q(k)) 
	    \end{equation}
	    under the $B$-twisted Chern character
	    \[
	    	\ktop_i(U) \longrightarrow \bigoplus_k \Ho^{2k - i}(U^{\an}, \Q(k)), \hspace{5mm} v \mapsto \exp(B)\ch(v).
	    \]
	    The Hodge filtration on $\wtop_i(U)^B$ is pulled back along the Chern character from \eqref{eq:lowest_weight_sum_twist}. When $B = 0$, we write $\wtop_i(U)^B = \wtop_i(U)$, and call $\wtop_i(U)$ the \emph{lowest-weight part} of $\ktop_i(U)$.
	\end{definition}

		\begin{lemma}
	\label{lem:lowest_weight_does_not_depend}
	    Let $U \to X$ be an open immersion from a smooth, separated scheme $U$ to a smooth, proper Deligne--Mumford stack $X$ over $\C$. For each $i$, the homomorphism $\ktop_i(\dperf(X))$ to $\ktop(U)$ factors through $\wtop_i(U)$, and the induced homomorphism
	    \[
	    	\ktop_i(\dperf(X)) \to \wtop_i(U)
	    \]
	    is a morphism of Hodge structures.
	\end{lemma}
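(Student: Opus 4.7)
The plan is to reduce to the case of a smooth projective variety by resolving the stacky locus of $X$, and then to invoke Deligne's mixed Hodge theory on the open subvariety. Concretely, I would first produce a proper morphism $f\colon Y \to X$ from a smooth projective variety $Y$ which restricts to an isomorphism over $U$. Since $U$ is a scheme, the stacky locus of $X$ is disjoint from $U$, so Bergh's destackification theorem (in the spirit of Example~\ref{ex:weak_factorization}) yields a sequence of smooth stacky blowups whose composition realizes $Y$ as a smooth proper algebraic space with $f$ an isomorphism over $U$. Further blowups iso over $U$, together with Chow's lemma for algebraic spaces, allow one to assume $Y$ is a smooth projective variety.

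With $f$ in hand, the pullback $f^{*}\colon \ktop_i(\dperf(X)) \to \ktop_i(\dperf(Y)) = \ktop_i(Y)$ is a morphism of Hodge structures by functoriality of Perry's construction (Construction~\ref{constr:nc_hodge}), and since $f$ is an isomorphism over $U$ the restriction to $U$ factors as
\[
\ktop_i(\dperf(X)) \xrightarrow{f^{*}} \ktop_i(Y) \longrightarrow \ktop_i(U).
\]
For the smooth projective $Y$, the Chern character identifies $\ktop_i(Y) \otimes \Q$ with $\bigoplus_k \Ho^{2k - i}(Y^{\an}, \Q(k))$, and the restriction $j_Y^{*}\colon \Ho^k(Y^{\an}, \Q) \to \Ho^k(U^{\an}, \Q)$ lands in the lowest-weight piece $\Wi_k \Ho^k(U^{\an}, \Q)$ by Deligne's theory \cite[3.2.16]{deligne_hodge_ii}. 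Hence the Chern character image of the composite lies in $\bigoplus_k \Wi_{-i}\bigl(\Ho^{2k - i}(U^{\an}, \Q(k))\bigr)$, so the composite factors through $\wtop_i(U)$.

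Finally, the induced map $\ktop_i(Y) \to \wtop_i(U)$ respects Hodge filtrations: $j_Y^{*}$ is a morphism of mixed Hodge structures on cohomology, and the Hodge filtration on $\wtop_i(U)$ is defined as the pullback of the Hodge filtration on the lowest-weight cohomology. Composing with the Hodge-theoretic $f^{*}$ yields that $\ktop_i(\dperf(X)) \to \wtop_i(U)$ is a morphism of Hodge structures, as desired. The main technical hurdle is the first step: the weak factorization of Example~\ref{ex:weak_factorization} connects $X$ and a variety only through a zig-zag of intermediate stacks, whereas the argument wants a single morphism $Y \to X$ that is an isomorphism over $U$. Producing this requires the destackification techniques underlying weak factorization (rather than the zig-zag statement itself), and all of the stacky input of the argument is concentrated in that step; the remaining steps are formal consequences of Deligne's theory and the functoriality of Perry's Hodge structure.
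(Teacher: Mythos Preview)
Your argument has a genuine gap at the first step: the morphism $f\colon Y \to X$ you want does not exist in general. If $X$ is, say, the $r$th root stack of a smooth projective variety $X_0$ along a smooth divisor $D$ (with $U = X_0 \setminus D$), then a morphism $Y \to X$ from a scheme is a morphism $\phi\colon Y \to X_0$ together with an effective Cartier divisor $D'$ on $Y$ satisfying $rD' = \phi^{*}D$. If $f$ is to be an isomorphism over $U$, then $\phi$ is proper birational and an isomorphism over $U$, so the strict transform of $D$ appears in $\phi^{*}D$ with multiplicity $1$; hence $\phi^{*}D$ is never $r$-divisible as an effective divisor when $r>1$. Bergh's destackification does not help here: it produces $X' \to X$ with $(X')^{\cs}$ smooth, but the coarse-moduli map points from $X'$ to $(X')^{\cs}$, and there is no induced map $(X')^{\cs} \to X$. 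You correctly flagged this step as the hurdle, but it is not a technicality one can push through---the morphism simply fails to exist.

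The paper's proof avoids this obstruction by not asking for a single map into $X$. It first treats the case where $X$ is already a smooth proper variety (your second paragraph, essentially), and then shows that the conclusion is invariant under each elementary move of the weak-factorization zig-zag. For a root stack or blowup $X' \to X$ centered in $X \setminus U$, the associated semiorthogonal decomposition writes $\ktop_i(\dperf(X'))$ as $\ktop_i(\dperf(X))$ plus summands supported on the complement of $U$; the latter restrict to zero in $\ktop_i(U)$, so the lemma holds for $X'$ if and only if it holds for $X$. Traversing the zig-zag connects an arbitrary Deligne--Mumford compactification to a smooth projective one. The key difference from your approach is that the paper works with the \emph{decomposition of $\ktop$} under each move, rather than trying to factor the restriction map through a single variety.
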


	\begin{proof}
		First, suppose that $X$ is a smooth, proper variety. Then the lemma follows from the fact that for each $k$, the induced map $\Ho^k(X^{\an}, \Q) \to \Ho^k(U^{\an}, \Q)$ preserves the weight filtration.

	  Next, we argue that if the lemma holds for a single compactification $X$ of $U$, then it holds for all compactifications. By weak factorization (Example~\ref{ex:weak_factorization}), it is enough to prove the following claim: If $X$ is a compactification of $U$, and $X' \to X$ is either a root stack along a smooth divisor in the complement of $U$, or a blowup along a smooth closed substack in the complement of $U$, then the conclusion of the lemma holds for $X$ if and only if it holds for $X'$. The claim follows from considering the semiorthogonal decomposition associated to either a blowup or a root stack. 
	\end{proof}

	\begin{definition}
	\label{def:sb_lowest_weight}
	    Let $U$ be a smooth, separated scheme over $\C$, and let $\P \to U$ be a Severi--Brauer variety. The \emph{lowest-weight part} of $\ktop_i(\dperf(\P)_1)$ is the subgroup
	    \[
	    	\wtop_i(\dperf(\P)_1) \subset \ktop_i(\dperf(\P)_1)
	    \]
	    which is the intersection of $\ktop_i(\dperf(\P)_1))$ with $\wtop_i(\P)$. 
	\end{definition}

	\begin{theorem}
	\label{thm:open_twisted_mukai}
	    Let $U$ be a smooth, quasi-projective variety. Let $\scu \to U$ be an essentially topologically trivial $\bmu_n$-gerbe, and let $\P \to U$ be a Severi--Brauer variety which represents the Brauer class of $\scu$. Let $i$ be an integer.
	    \begin{enumerate} [label = (\arabic*)]
	    	\item $\wtop_i(\dperf(\P)_1)$ is a Hodge substructure of $\wtop_i(\P)$.
	    	\item For each $1$-twisted topological line bundle $L$ on $\scu$, there is an isomorphism of Hodge structures
	    	\[
	    		\wtop_i(\dperf(\P)_1) \to \wtop_i(U)^{c_1(L)}.
	    	\]
	    \end{enumerate}
	\end{theorem}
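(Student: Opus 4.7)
The plan is to mimic the proof of Theorem~\ref{thm:ktop_of_gerbe}, restricting everywhere to the lowest-weight subgroups. Letting $\pi : \P \to U$ denote the given Severi--Brauer variety, I would consider the same composition
\[
    \ktop_i(\dperf(\P)_0) \xrightarrow{\Phi} \ktop_i(\dperf(\P)_1) \xrightarrow{\Psi} \ktop_i(\dperf(\P)_0) \otimes \Q,
\]
where $\Phi$ is multiplication by $[L(1)]$ and $\Psi$ is multiplication by $[\sco_{\P_{\scu}}(-1)]$. By Proposition~\ref{prop:key_input}, $\Phi$ is an isomorphism, and, identifying $\ktop_i(\dperf(\P)_0) \simeq \ktop_i(U)$ via $\pi^*$, a direct Chern character calculation yields the key formula
\[
    \ch(\Phi(v)) \;=\; \pi^{\ast} \bigl( \exp(c_1(L)) \cdot \ch(v) \bigr) \cdot \exp(H), \qquad v \in \ktop_i(U),
\]
where $H \in \Ho^2(\P^{\an}, \Q(1))$ is the rational class descending from $c_1(\sco_{\P_{\scu}}(1))$.

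For part (2), I would extract the desired bijection $\Phi : \wtop_i(U)^{c_1(L)} \to \wtop_i(\dperf(\P)_1)$ from this formula via two observations about the weight filtration. First, $H$ is rationally algebraic, so every component of $\exp(H)$ is lowest-weight; and because $\exp(-H)$ is a two-sided inverse, multiplication by $\exp(H)$ both preserves and detects the lowest-weight subspace of $\Ho^{\ast}(\P^{\an}, \Q)$. Second, $\pi^{\ast}$ is an injective morphism of mixed Hodge structures, hence strict for the weight filtration: a class $\alpha \in \Ho^{\ast}(U^{\an}, \Q)$ is lowest-weight if and only if $\pi^{\ast} \alpha$ is. Combined, these show that $\Phi(v) \in \wtop_i(\P) \cap \ktop_i(\dperf(\P)_1) = \wtop_i(\dperf(\P)_1)$ if and only if $v \in \wtop_i(U)^{c_1(L)}$, which gives the bijection. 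The same formula shows $\Phi$ intertwines the $c_1(L)$-twisted Chern character on $U$ with the restriction to $\wtop_i(\dperf(\P)_1)$ of the Chern character defining the Hodge filtration on $\wtop_i(\P)$, so $\Phi$ is an isomorphism of Hodge structures once $\wtop_i(\dperf(\P)_1)$ is endowed with the filtration induced from $\wtop_i(\P)$.

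Part (1) will then follow formally: by the identification in the previous paragraph, the Hodge decomposition on $\wtop_i(\dperf(\P)_1) \otimes \C$ transported from $\wtop_i(U)^{c_1(L)} \otimes \C$ via $\Phi$ coincides with the one obtained by intersecting with the Hodge pieces of $\wtop_i(\P) \otimes \C$, so $\wtop_i(\dperf(\P)_1)$ is a Hodge substructure of $\wtop_i(\P)$. The main point requiring care is the weight-strictness of $\pi^{\ast}$, which I would verify from the injectivity of $\pi^{\ast}$ on the graded pieces of the weight filtration---a standard consequence of the fact that the rational cohomology of a Severi--Brauer variety is free of finite rank over that of the base.
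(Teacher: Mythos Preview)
Your proposal is correct and follows essentially the same approach as the paper: both use the composition $\Phi = [L(1)]$, $\Psi = [\sco_{\P_{\scu}}(-1)]$ from the proof of Theorem~\ref{thm:ktop_of_gerbe} and the key input Proposition~\ref{prop:key_input}. The only difference is organizational: the paper proves (1) first by observing directly that $\Psi$, being multiplication by a rational algebraic class, is an automorphism of the rational Hodge structure $\wtop_i(\P)\otimes\Q$ and hence carries $\wtop_i(\dperf(\P)_1)\otimes\Q$ isomorphically onto $\wtop_i(\dperf(\P)_0)\otimes\Q$; you instead unwind this at the level of Chern characters via $\pi^*$ and $\exp(H)$, establish the bijection for (2) first, and then read off (1). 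Your weight-strictness argument for $\pi^*$ is exactly what makes the paper's one-line claim work, so the two arguments are really the same observation expressed at different levels.
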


	\begin{proof}
		Following the proof of Theorem~\ref{thm:ktop_of_gerbe}, we consider the composition
		\[
			\begin{tikzcd}[column sep=large]
				\ktop_i(\dperf(\P)_0) \ar[r, "{[L(1)]}"] & \ktop_i(\dperf(\P)_1) \ar[r, "{[\sco_{\P_{\scu}}(-1)]}"] & \ktop_i(\dperf(\P)_0) \otimes \Q.
			\end{tikzcd}
		\]
		Multiplication by $[\sco_{\P_{\scu}}(-1)]$ gives an automorphism of the rational Hodge structure $\wtop_i(\P) \otimes \Q$, so (1) follows since the right-hand map is an isomorphism after extending scalars to $\Q$. Then (2) follows, since the left-hand map identifies the Hodge structure on $\wtop_i(\dperf(\P)_1))$ with the twisted Mukai structure $\wtop_i(U)^{c_1(L)}$.
	\end{proof}

	\subsection{Computations on quasi-projective varieties}
	\label{ssec:computations_on_quasi_proj}

	In this section, we prove several technical results which will be used later. In Lemma~\ref{lem:hodge_class_open_subvar}, we give a lower bound for the ranks of Hodge classes for twisted Mukai structures on smooth, quasi-projective varieties, and in Lemma~\ref{lem:affine_surface}, we compute the set of Hodge classes for twisted Mukai structures on affine surfaces.

	For a smooth, separated scheme $U$ over $\C$, we define
	\[
		\Wi_0\Ho^2(U^{\an}, \Z(1)) \subset \Ho^2(U^{\an}, \Z(1))
	\]
	to be the set of class $v$ whose image in $\Ho^2(U^{\an}, \Q(1))$ lies in $\Wi_0 \Ho^2(X^{\an}, \Q(1))$. The proof of Lemma~\ref{lem:basic_h2_v2} below implies that $\Wi_0 \Ho^2(U^{\an}, \Z(1))$ coincides with the image of $\Ho^2(X^{\an}, \Z(1))$ for any smooth compactification of $U$, although we do not need this fact.

	\begin{lemma}
	\label{lem:basic_h2_v2}
	    Let $U$ be a smooth, separated scheme over $\C$. Then
	    \[
	    	\NS(U) = \Hdg(\Wi_0 \Ho^2(U^{\an}, \Z(1))).
	    \]
	\end{lemma}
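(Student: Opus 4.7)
The plan is to reduce to Lefschetz's $(1,1)$-theorem on a smooth compactification. Choose a smooth projective compactification $j: U \hookrightarrow X$ with simple normal crossings complement $D = X \setminus U$, and write $r = j^* : \Ho^2(X^{\an}, \Z(1)) \to \Ho^2(U^{\an}, \Z(1))$ for the restriction. Because $X$ is smooth, the excision sequence $\bigoplus_i \Z \cdot [D_i] \to \Pic(X) \to \Pic(U) \to 0$ shows that $\Pic(X) \twoheadrightarrow \Pic(U)$, and hence $\NS(U) = r(\NS(X))$.

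For the inclusion $\NS(U) \subset \Hdg(\Wi_0 \Ho^2(U^{\an}, \Z(1)))$, by Lefschetz $(1,1)$ on $X$ we have $\NS(X) = \Hdg(\Ho^2(X^{\an}, \Z(1)))$. The restriction $r$ is a morphism of mixed Hodge structures, and $\Ho^2(X^{\an}, \Q(1))$ is pure of weight $0$, so by Deligne its image lies in $\Wi_0 \Ho^2(U^{\an}, \Q(1))$; Hodge classes restrict to Hodge classes.

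For the reverse inclusion, let $v \in \Hdg(\Wi_0 \Ho^2(U^{\an}, \Z(1)))$. The strategy is to produce an integral Hodge lift $\tilde v \in \Ho^2(X^{\an}, \Z(1))$, since then Lefschetz on $X$ forces $\tilde v \in \NS(X)$ and so $v = r(\tilde v) \in \NS(U)$. Existence of an integral lift is equivalent to vanishing of the residue $\mathrm{res}(v) \in \Ho^3_D(X^{\an}, \Z(1))$ in the Gysin sequence. Rationally this is immediate from $v_{\Q} \in \Wi_0$; integrally it follows from the torsion-freeness of the relevant graded piece of $\Ho^3_D$, assembled from $\Ho^1$ of the smooth projective strata of $D$. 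This step also verifies the author's side remark that $\Wi_0 \Ho^2(U^{\an}, \Z(1))$ equals the image of $\Ho^2(X^{\an}, \Z(1))$.

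Any such integral lift $\tilde v$ is then automatically a Hodge class. The surjection $r : \Ho^2(X^{\an}, \Q(1)) \twoheadrightarrow \Wi_0 \Ho^2(U^{\an}, \Q(1))$ is a morphism of pure weight-$0$ Hodge structures and therefore preserves the Hodge decomposition. Since $v$ is of type $(0, 0)$, the $(-1, 1)$-component of $\tilde v$ maps to zero under $r$; but $\ker(r)_{\Q}$ is spanned by the cycle classes $[D_i]$, which are of type $(0, 0)$, so the $(-1, 1)$-summand of $\ker(r)_{\C}$ vanishes. Hence $\tilde v^{(-1, 1)} = 0$, and by reality $\tilde v^{(1, -1)} = 0$ as well, so $\tilde v$ is Hodge. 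The main obstacle is the integrality of the lift in the residue-vanishing step; the Hodge character of any such lift, by contrast, is purely formal, flowing from strictness of morphisms of mixed Hodge structures together with the fact that the boundary cycles $[D_i]$ are of type $(0, 0)$.
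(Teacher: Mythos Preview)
Your proof is correct and rests on the same three inputs as the paper: Lefschetz $(1,1)$ on the compactification $X$; the fact that $\ker(r)$ is generated by the classes $[D_i]$; and the torsion-freeness of $\Coker(r)$, deduced from the cohomology of the strata of $D$. The paper packages these via a snake-lemma chase on the two-row diagram with rows $\NS(X) \hookrightarrow \Ho^2(X^{\an},\Z(1))$ and $\Hdg(\Wi_0\Ho^2(U^{\an},\Z(1))) \hookrightarrow \Ho^2(U^{\an},\Z(1))$, concluding that the cokernel of $\NS(X) \to \Hdg(\Wi_0\Ho^2(U^{\an},\Z(1)))$ is torsion-free and hence vanishes; you instead lift integrally and then verify by hand that any lift is Hodge, using that $\ker(r)_\C$ is purely of type $(0,0)$. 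The two arguments have the same content, and your direct Hodge-type check is arguably more transparent than the diagram chase. One minor refinement to your residue step: the torsion-freeness of $\Coker(r)$ uses, beyond $\Ho^1(D^{(1)},\Z)$, the contribution $\Ho^0(D^{(2)},\Z)$ from the double intersections, as the paper notes---though of course that group is automatically torsion-free.
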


	\begin{proof}
	    Let $X$ be a smooth compactification of $U$ so that $D = X - U$ has simple normal crossings. Consider the diagram
	    \[
	    	\begin{tikzcd}
	    		0 \ar[r] & \NS(X) \ar[d, "a"] \ar[r, "f"] & \Ho^2(X^{\an}, \Z(1)) \ar[d, "b"] \ar[r] & \Coker f \ar[d, "c"] \ar[r] & 0 \\
	    		0 \ar[r] & \Hdg(\Wi_0\Ho^2(U^{\an}, \Z(1))) \ar[r, "g"] & \Ho^2(U^{\an}, \Z(1)) \ar[r] & \Coker g \ar[r] & 0 .
	    	\end{tikzcd}
	    \]
	    Since $a \otimes \Q$ is an isomorphism, we need to show that $\Coker a$ is torsion-free. It is well-known that $\Ker b$ is generated by the cycle classes of the components of $D$ (see for instance \cite[Prop. 2.2]{two_coniveau}). In particular, $\Ker a$ is isomorphic to $\Ker b$, and the connecting homomorphism $\Ker c \to \Coker a$ is injective. Since $f$ is the inclusion of a saturated sublattice, $\Coker f$ is torsion-free, hence so is $\Ker c$. In particular, to show that $\Coker a$ is torsion-free, it is enough to show that $\Coker b$ is torsion-free. 

	    Finally, the torsion-freeness of $\Coker b$ follows from the Leray spectral sequence for $j:U \to S$, and the observations that $\Ho^0((D^{(2)})^{\an}, \Z)$ and $\Ho^1((D^{(1)})^{\an}, \Z)$ are torsion-free, where $D^{(1)}$ is the disjoint union of the components of $D$ and $D^{(2)}$ is the disjoint union of the pairwise intersections of the components of $D$. 
	\end{proof}

	Let $U$ be a smooth, quasi-projective variety over $\C$. Given a class $v \in \Ho^2(U^{\an}, \Z(1))$ and an integer $n > 0$, let $\alpha(v, n) \in \Br(U)$ be the image of $v$ under the composition
	\[
		\begin{tikzcd}[column sep=large]
			\Ho^2(U^{\an}, \Z(1)) \ar[r, "\exp(-/n)"] & \Ho^2_{\et}(U, \bmu_n) \ar[r] & \Ho^2_{\et}(U, \G_m).
		\end{tikzcd}
	\]
	The following lemma shows that $\wtop_i(U)^{v/n}$ depends only on $\alpha(v, n)$.

	\begin{lemma}
	\label{lem:depends_only_on_br}
	    In the situation above, let $v_1, v_2 \in \Ho^2(U^{\an}, \Z(1))$, and let $n_1, n_2 > 0$ be integers. If $\alpha(v_1, n_1) = \alpha(v_2, n_2)$, then there is an isomorphism
	    \[
	    	\wtop_i(U)^{v_1/n_1} \simeq \wtop_i(U)^{v_2/n_2}.
	    \]
	\end{lemma}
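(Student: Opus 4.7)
The plan is to follow the strategy of Corollary~\ref{cor:twisted_mukai-b_field}: write the difference $v_1/n_1 - v_2/n_2$ as a Néron--Severi part plus an integral part, handle the first by the identity map on $\ktop_i(U)$, and handle the second by multiplication by a topological line bundle.

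First, after replacing $(v_i, n_i)$ by $((n/n_i) v_i, n)$ for $n = n_1 n_2$, one may assume $n_1 = n_2 = n$; this is legitimate because the substitution preserves both $v_i/n_i$ and $\alpha(v_i, n_i)$, the latter via the compatibility of the Kummer maps $\Ho^2(U^{\an}, \bmu_{n_i}) \to \Ho^2(U^{\an}, \bmu_n)$. The Kummer sequence gives an exact sequence
\[
\Pic(U) \xrightarrow{c_1 \bmod n} \Ho^2_{\et}(U, \bmu_n) \to \Br(U)[n] \to 0,
\]
and combining this with the long exact sequence for $0 \to \Z(1) \xrightarrow{n} \Z(1) \to \bmu_n \to 0$ on $U^{\an}$, the hypothesis $\alpha(v_1, n) = \alpha(v_2, n)$ produces $L \in \Pic(U)$ and $w \in \Ho^2(U^{\an}, \Z(1))$ with $v_1/n - v_2/n = c_1(L)/n + w$.

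Set $B_2 = v_2/n$, $B_m = B_2 + c_1(L)/n$, and $B_1 = B_m + w$. The plan is to compare $B_2$ to $B_m$ and then $B_m$ to $B_1$ separately. For the step $B_2 \to B_m$: by Lemma~\ref{lem:basic_h2_v2}, $c_1(L)$ lies in $\Wi_0 \Ho^2(U^{\an}, \Z(1))$ and is Hodge there, so multiplication by $\exp(c_1(L)/n)$ preserves the lowest-weight subspace and acts as an automorphism of its Hodge structure; the identity map on $\ktop_i(U)$ therefore gives an isomorphism $\wtop_i(U)^{B_2} \simeq \wtop_i(U)^{B_m}$. For the step $B_m \to B_1$: choose a topological line bundle $L_w$ on $U$ with $c_1(L_w) = w$ (which exists because complex topological line bundles on $U^{\an}$ are classified by $\Ho^2(U^{\an}, \Z(1))$), and observe that
\[
\exp(B_1) \cdot \ch([L_w]^{-1} v) = \exp(B_m + w) \exp(-w) \ch(v) = \exp(B_m) \ch(v),
\]
so multiplication by $[L_w]^{-1}$ restricts to an isomorphism $\wtop_i(U)^{B_m} \simeq \wtop_i(U)^{B_1}$, with inverse given by multiplication by $[L_w]$; the induced map on the lowest-weight target is the identity, so the pulled-back Hodge filtrations match. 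Composing the two isomorphisms yields the desired identification $\wtop_i(U)^{v_2/n_2} \simeq \wtop_i(U)^{v_1/n_1}$.

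The main potential obstacle is ensuring that the decomposition $v_1/n - v_2/n = c_1(L)/n + w$ actually splits into a Hodge-theoretically tractable summand and an integral summand. This reduces to the standard identifications of \'etale versus Betti $\bmu_n$-cohomology together with Lemma~\ref{lem:basic_h2_v2}, which pins down $\NS(U)$ as the Hodge part of $\Wi_0 \Ho^2(U^{\an}, \Z(1))$ and so allows the Néron--Severi correction to be absorbed into the lowest-weight framework.
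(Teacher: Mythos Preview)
Your proof is correct and follows essentially the same approach as the paper: reduce to a common denominator, use the Kummer sequence to write $v_1 - v_2 \in n \cdot \Ho^2(U^{\an}, \Z(1)) + \NS(U)$, then handle the $\NS(U)$ summand via the identity on $\ktop_i(U)$ and the integral summand via multiplication by a topological line bundle, exactly as in Corollary~\ref{cor:twisted_mukai-b_field}. The paper organizes this as three labeled observations and applies them, but the content is identical to what you have written.
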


	\begin{proof}
	    For simplicity, we write $\Wi(v/n) = \wtop_i(\dperf(U))^{v/n}$. We make three observations:
	    \begin{enumerate} [label = (\arabic*)]
	    	\item For $t > 0$, $\Wi(t  v/t  n) \simeq \Wi(v/n)$.
	    	\item For $w \in \Ho^2(U^{\an}, \Z(1))$, $\Wi((v + nw)/n) \simeq \Wi(v/n)$.
	    	\item For $w \in \NS(U)$, $\Wi((v + w)/n) \simeq \Wi(v/n)$.
	    \end{enumerate}
	    Note that (1) is from the definition, and for (2) and (3) one follows the proof of Corollary~\ref{cor:twisted_mukai-b_field}.

	    For the lemma, by (1) we may assume that $n_1 = n_2$. Then from the Kummer sequence, $v_1 - v_2$ lies in $n_1 \cdot \Ho^2(U^{\an}, \Z(1)) + \NS(U)$, and we apply (2) and (3).
	\end{proof}

	\begin{lemma}
	\label{lem:hodge_class_open_subvar}
	    Let $U$ be a smooth, quasi-projective variety over $\C$, with a class $v \in \Ho^2(U^{\an}, \Z(1))$ and an integer $n > 0$. Then the rank of any Hodge class in $\Wi_0(U)^{v/n}$ is divisible by $n_0$, where $n_0$ is the order of $\alpha(v, n)$ in $\Br(U)$.
	\end{lemma}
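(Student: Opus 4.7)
The plan is to read off the divisibility from the degree-two part of the Hodge condition: a class in $\wtop_0(U)^{v/n}$ of rank $r$ must have its degree-two twisted Chern character component in $\NS(U) \otimes \Q$, and this alone will force $r \alpha(v,n) = 0$ in $\Br(U)$ via the Kummer sequence.

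More precisely, let $w \in \wtop_0(U)^{v/n}$ be a Hodge class of rank $r$. The degree-two component of $\exp(v/n) \ch(w)$ is $r v/n + c_1(w)$, and the definition of the twisted Mukai structure requires this to lie in $\Hdg \Wi_0 \Ho^2(U^{\an}, \Q(1))$. By Lemma~\ref{lem:basic_h2_v2}, this group equals $\NS(U) \otimes \Q$. Clearing denominators yields
\[
	rv \in \NS(U) \otimes \Q + n \Ho^2(U^{\an}, \Z(1)),
\]
as subsets of $\Ho^2(U^{\an}, \Q(1))$.

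Next, the characterization $\NS(U) = \Hdg \Wi_0 \Ho^2(U^{\an}, \Z(1))$ from Lemma~\ref{lem:basic_h2_v2} exhibits $\NS(U)$ as a saturated subgroup of $\Ho^2(U^{\an}, \Z(1))$, so the rational $\NS$ summand in the above decomposition of $rv$ is in fact integral, and hence lies in $\NS(U)$. Applying $\exp(-/n) \colon \Ho^2(U^{\an}, \Z(1)) \to \Ho^2(U^{\an}, \bmu_n)$ (whose kernel is $n \Ho^2(U^{\an}, \Z(1))$), this shows that $\exp(rv/n)$ lies in the image of $\NS(U)/n$, and hence of $\Pic(U)/n$, under the Kummer map. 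Reading off from the Kummer exact sequence
\[
	\Pic(U)/n \to \Ho^2(U^{\an}, \bmu_n) \to \Br(U)[n] \to 0,
\]
the class $\exp(rv/n)$ represents $r \alpha(v, n)$, so $r \alpha(v,n) = 0$ in $\Br(U)$. Since $\alpha(v,n)$ has order $n_0$, we conclude $n_0 \mid r$.

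There is no genuinely hard step; the argument is essentially the translation of the Hodge-class condition in degree two into a Brauer-theoretic divisibility. The substantive content has been absorbed into Lemma~\ref{lem:basic_h2_v2}, which supplies both the identification $\Hdg \Wi_0 \Ho^2(U^{\an}, \Q(1)) = \NS(U) \otimes \Q$ and the saturation of $\NS(U)$ in $\Ho^2(U^{\an}, \Z(1))$ needed for the integral upgrade.
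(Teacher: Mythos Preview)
Your proof is correct and follows essentially the same approach as the paper: extract the degree-two Hodge condition, use Lemma~\ref{lem:basic_h2_v2} to identify the resulting integral class $rv + n\,c_1(w)$ as an element of $\NS(U)$, and then read off the vanishing of $r\cdot\alpha(v,n)$ in $\Br(U)$ via the Kummer sequence. The only difference is that the paper first invokes Lemma~\ref{lem:depends_only_on_br} to reduce to the case $n = n_0$ before running the argument, whereas you work directly with arbitrary $n$ and conclude $n_0 \mid r$ at the end; your route is marginally more self-contained, but the core content is identical.
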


	\begin{proof}
	    By Lemma~\ref{lem:depends_only_on_br}, we may suppose that $n = n_0$. Let $w \in \wtop_0(U)^{v/n}$ be a Hodge class, and write $w = (w_0, w_2, \dots)$, so that
	    \[
	    	\exp(B) \cdot \ch(w) = (w_0, w_2 + w_0 \cdot B, \dots).
	    \]
	    Then $w_2 + w_0 \cdot (v/n)$ is Hodge, so $n  w_2 + w_0 v$ is an integral Hodge class in $\Wi_0(\Ho^2(U^{\an}, \Z(1))$, and hence is the Chern class of an algebraic line bundle by Lemma~\ref{lem:basic_h2_v2}. In particular, the image of  $w_0 v + n w_2$ under the composition
		\[
			\Ho^2(U^{\an}, \Z(1)) \to \Ho^2(U^{\an}, \bmu_n) \to \Br(U)[n]
		\]
		is trivial. On the other hand, the image coincides with $w_0 \cdot \alpha(v/n)$. Therefore, $n$ divides $w_0$.	    
	\end{proof}

	\begin{lemma}
	\label{lem:affine_surface}
	   Let $U$ be a smooth, affine surface over $\C$, with a class $v \in \Ho^2(U^{\an}, \Z(1))$ and an integer $n > 0$. There is a short exact sequence
	    \[
	    	\begin{tikzcd}
	    		0 \ar[r] & \NS(U) \ar[r] & \Hdg(\wtop_0(U)^{v/n}) \ar[r, "\rk"] & n_0 \cdot \Z \ar[r] & 0,
	    	\end{tikzcd}
	    \]
	    where $n_0$ is the order of $\alpha(v/n)$ in $\Br(U)$.
	\end{lemma}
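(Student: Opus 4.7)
The plan is to combine Lemma~\ref{lem:hodge_class_open_subvar} with an elementary but rather rigid description of $\ktop_0(U)$ available on an affine surface. First, by Lemma~\ref{lem:depends_only_on_br} I may replace $n$ by $n_0$ without changing the isomorphism type of the twisted Mukai structure; write $B = v/n_0$. The divisibility $\mathrm{image}(\rk) \subseteq n_0 \cdot \Z$ on the right is then exactly Lemma~\ref{lem:hodge_class_open_subvar}, so the remaining tasks are to identify $\ker(\rk)$ with $\NS(U)$ and to produce a Hodge class of rank exactly $n_0$.

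To do both, I will exploit that a smooth complex affine surface has the homotopy type of a real $2$-complex by Andreotti--Frankel, so $\Ho^k(U^{\an}, \Z) = 0$ for $k > 2$. This makes the Atiyah--Hirzebruch spectral sequence for $\ktop_0(U)$ degenerate for dimension reasons, and the resulting two-step filtration splits because $\Ho^0(U^{\an}, \Z) = \Z$ is free, yielding an isomorphism
\[
    (\rk, c_1) : \ktop_0(U) \xrightarrow{\sim} \Z \oplus \Ho^2(U^{\an}, \Z(1)).
\]
For the same dimension reason, the degree-$4$ component of the Chern character vanishes, so the $B$-twisted Chern character of a class $(r, c)$ reduces to $(r, \, c + rB) \in \Ho^0(U^{\an}, \Q) \oplus \Ho^2(U^{\an}, \Q(1))$.

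With this description in hand, membership of $(r, c)$ in $\wtop_0(U)^B$ amounts to $c + rB \in \Wi_0 \Ho^2(U^{\an}, \Q(1))$, and being Hodge amounts to $c + rB$ being a rational Hodge class there. Setting $r = 0$ recovers rank-zero Hodge classes as integral Hodge classes in $\Wi_0 \Ho^2(U^{\an}, \Z(1))$, which by Lemma~\ref{lem:basic_h2_v2} equals $\NS(U)$; this identifies the kernel. For the image, the class $(n_0, -v) \in \ktop_0(U)$ has $B$-twisted Chern character $(n_0, \, -v + n_0 \cdot v/n_0) = (n_0, 0)$, which is trivially a Hodge class and realizes rank $n_0$. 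Assembling these observations yields the short exact sequence. The only step with any content is the structural description of $\ktop_0(U)$ for an affine surface; once that is in place, the rest is a direct translation into the $\Ho^2$-level statement of Lemma~\ref{lem:basic_h2_v2}.
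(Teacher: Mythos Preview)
Your argument is correct and follows essentially the same route as the paper: reduce to $n=n_0$ via Lemma~\ref{lem:depends_only_on_br}, use the Atiyah--Hirzebruch spectral sequence to identify $\ktop_0(U)\cong \Z\oplus \Ho^2(U^{\an},\Z(1))$, invoke Lemma~\ref{lem:basic_h2_v2} for the kernel, exhibit the explicit Hodge class $(n_0,-v)$ for surjectivity onto $n_0\cdot\Z$, and cite Lemma~\ref{lem:hodge_class_open_subvar} for the divisibility constraint. Your added justification via Andreotti--Frankel for the degeneration of the spectral sequence is a welcome clarification that the paper leaves implicit.
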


	\begin{proof}
		From the Atiyah--Hirzebruch spectral sequence, the map
		\[
			(\rk, c_1):\ktop(U) \to \Ho^0(U^{\an}, \Z) \oplus \Ho^2(U^{\an}, \Z(1))
		\]
		is an isomorphism. By Lemma~\ref{lem:depends_only_on_br}, we may suppose that $n = n_0$. By Lemma~\ref{lem:basic_h2_v2}, $(w_0, w_2)$ is a Hodge class in $\wtop_0(U)^{v/n}$ if and only if $w_0 \cdot (v/n) + w_2$ lies in $\NS(U)_{\Q}$. If $w_0 = 0$, then $w_2 \in \NS(U)$ by Lemma~\ref{lem:basic_h2_v2}. The morphism 
		\[
			\NS(U) \to \Hdg(\wtop_0(U)^{v/n})
		\]
		in the statement of the lemma is given by sending $w_2$ to $(0, w_2)$. Since $(n, -v)$ is a Hodge class of rank $n$, it remains to show that the rank of any Hodge class is divisible by $n$, which is Lemma~\ref{lem:hodge_class_open_subvar}.
	\end{proof}

	\section{The Hodge-theoretic index} 
	\label{sec:hodge_theoretic_index}

	In \S \ref{ssec:period_index_problem}, we review the period-index conjecture for function fields over an algebraically closed field. In \S \ref{ssec:global_period_index_problem}, we review strategies for converting the period-index conjecture into a global conjecture about smooth proper varieties or orbifolds. In \S \ref{ssec:hodge_theoretic_index}, we introduce the Hodge-theoretic index, and establish some of its properties. In \S \ref{ssec:bounding_the_hodge_theoretic_index}, we obtain a bound on the Hodge-theoretic index for topologically trivial classes. In \S \ref{ssec:potential_obstructions}, we examine a potential obstruction to the period-index conjecture on threefolds.

	\subsection{The period-index problem}
	\label{ssec:period_index_problem}

	We refer to \cite{ct_bourbaki} for an account of the classical theory of Brauer groups and the period-index conjecture. 

	\begin{definition}
	\label{def:k_field_index}
	    Let $K$ be a field, and let $\alpha \in \Br(K)$. The \emph{period} of $\alpha$ is its order in the torsion group $\Br(K)$. The \emph{index} of $\alpha$ is equal to the following positive integers:
	    	\begin{enumerate} [label = \normalfont{(\arabic*)}]
			\item The minimum rank of an $\alpha$-twisted vector space. 
			\item The minimum degree of a separable extension $K'/K$ such that $\alpha_{K'} = 0$.
			\item The integer $d' + 1$, where $d'$ is the minimum dimension of a Severi--Brauer variety of class $\alpha$.
			\item The degree $\sqrt{\dim D}$ of the unique division algebra $D$ of class $\alpha$.
	\end{enumerate}
	\end{definition}

	One may show that $\ind(\alpha)$ divides $\per(\alpha)$, and that $\per(\alpha)$ and $\ind(\alpha)$ share the same prime factors. In particular, for each $\alpha \in \Br(K)$, there is an integer $\epsilon(\alpha)$ such that
	\[
	 	\ind(\alpha) \divides \per(\alpha)^{\epsilon(\alpha)}.
	 \] 
	 The \emph{period-index problem} for $K$ is the problem of determining a bound for $\epsilon(\alpha)$, preferably one which is uniform over the Brauer group. The main conjecture is the following:

	\begin{conjecture}[The period-index conjecture] \label{conj:period_index}
		Let $K$ be a field of transcendence degree $d$ over an algebraically closed field $k$. For any Brauer class $\alpha \in \Br(K)$, 
		\[
		 	\ind(\alpha) \divides \per(\alpha)^{d - 1}.
		 \] 
	\end{conjecture}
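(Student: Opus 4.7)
The final statement is the full period-index conjecture, a notoriously open problem; rather than claiming a proof, the plan is to lay out the two-step strategy that the paper is built around and identify precisely where the known tools stop.

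The first step is to globalize. Given $K/k$ of transcendence degree $d$ and $\alpha \in \Br(K)$, I would pick a smooth projective model $X/k$ with function field $K$ and spread $\alpha$ to a class, still called $\alpha$, in $\Br(U)$ for some dense open $U \subset X$. By standard specialization arguments, $\ind(\alpha_K) = \ind(\alpha_U)$ and similarly for the period, and one may assume $k = \C$ by a Lefschetz principle argument. Next, using Lemma \ref{lem:bloch_kato} (the Colliot-Th\'el\`ene--Voisin result that $\sch^3(\Z(1))$ is torsion-free), I would shrink $U$ further so that $\alpha_U$ is topologically trivial. At this point, applying Lemma \ref{lem:open_subvariety_gerbe}, I would extend $\alpha_U$ to a class $\alpha' \in \Br(X')$ on a smooth proper orbifold compactification $X'$ of $U$. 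This reduces the problem to constructing a rank-$\per(\alpha)^{d-1}$ element in the image of $\K_0(\dperf(X', \alpha')) \to \Z$, since this image contains $\ind(\alpha_U) = \ind(\alpha_K)$.

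The second step is to attack the reduced problem in two sub-steps, following the framework sketched in the introduction. \emph{Sub-step (1):} Produce a Hodge class $v \in \Hdg(\dperf(X', \alpha'), \Z)$ of rank $\per(\alpha)^{d-1}$. Using Corollary \ref{cor:twisted_mukai-b_field} (or its quasi-projective variant, Theorem \ref{thm:open_twisted_mukai}), one has an isomorphism $\ktop_0(\dperf(X', \alpha')) \simeq \ktop_0(X')^B$ for some rational $B$-field $B$. Concretely, I would try to write $B = v_0/n$ with $n = \per(\alpha)$ and $v_0 \in \Ho^2(X'^{\an}, \Z(1))$ integral, then search for a Hodge class of the form $(n^{d-1}, n^{d-2}(-v_0) + n^{d-1} H_1, \tfrac{n^{d-3}}{2}v_0^2 + \cdots, \ldots)$, adjusting by Hodge classes $H_i$ in each degree so that $\exp(B)\ch(v)$ lands in $\bigoplus_k \Hdg^{2k}(X', \Q(k))$ degree by degree. \emph{Sub-step (2):} Show that $v$ is algebraic, i.e. verify the integral Hodge conjecture for $\dperf(X', \alpha')$ on this class.

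The two sub-steps represent genuinely different kinds of obstacles. Sub-step (1) is visibly blocked when $\per(\alpha)$ shares prime factors with $(d-1)!$: the na\"ive inductive adjustment by Hodge classes in $\Ho^{2k}(X^{\an}, \Q(k))$ requires dividing by $k!$, and this is exactly the source of the extra factor $((d-1)!)^{d-2}$ in the Hodge-theoretic bound of Theorem \ref{thm:hind_bound_intro}. Removing this factor would require a finer cohomological argument (e.g. producing integral classes on specific varieties), and is encoded in the obstruction identified in Theorem \ref{thm:obstruction_intro} already for $d = 3$, $\per = 2$. Sub-step (2), however, is the main obstacle: it asks for the integral Hodge conjecture for twisted derived categories, which is strictly stronger than the rational Hodge conjecture plus degeneration of Atiyah--Hirzebruch, and which Theorem \ref{thm:gabber_intro} shows can fail even on explicit products $C \times \prod E_i$. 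Thus the proposal is really a program: run the construction above to pinpoint, in each dimension and period, an explicit Hodge class whose algebraicity is equivalent to the period-index bound, and then hope to prove algebraicity case-by-case (as is carried out in dimension $3$ for abelian varieties in the forthcoming joint work with Perry). A uniform proof of Sub-step (2) across all $(X', \alpha')$ appearing in Sub-step (1) is not within reach of current methods, which is why the conjecture remains open for $d \geq 3$.
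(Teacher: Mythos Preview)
The statement is a \emph{conjecture}, not a theorem, and the paper does not claim or provide a proof; it remains open for $d \geq 3$. You correctly recognize this and, rather than attempting a proof, outline the two-step program (construct a Hodge class of the right rank, then prove its algebraicity) that the paper develops; your summary of the globalization step, the role of Theorem~\ref{thm:hind_bound_intro} and its $(d-1)!$ obstruction, the counterexamples of Theorem~\ref{thm:gabber_intro}, and the status of each sub-step is accurate and matches the paper's own framing in the introduction and \S\ref{sec:hodge_theoretic_index}.
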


	\noindent
	There are variants of Conjecture~\ref{conj:period_index}, for instance when $k$ is a finite field or a $p$-adic field.

	The status of Conjecture \ref{conj:period_index} is as follows:
	\begin{enumerate} 
		\item[$d \leq 1$] The Brauer group $\Br(K)$ is trivial.
		\item[$d = 2$] Conjecture~\ref{conj:period_index} was proved by de Jong \cite{dejong_period_index} when $\per(\alpha)$ is prime to the characteristic of $k$. When the characteristic of $k$ divides $\per(\alpha)$, the conjecture was proved through different methods by de Jong--Starr \cite{dejong_starr} and Lieblich \cite{lieblich_twisted_sheaves}.
		\item[$d \geq 3$] Conjecture~\ref{conj:period_index} is not known for any given field $K$.
	\end{enumerate}

	When $d \geq 3$, it is not known in general that there exists an exponent $\epsilon$ such that for each $\alpha \in \Br(K)$,
	\[
		 \ind(\alpha) \divides \per(\alpha)^{\epsilon}.
	\]
	However, Matzri \cite{matzri} has obtained non-uniform exponents $\epsilon(\alpha)$, which depend only on $\trdeg(K/k)$ and the prime factors of $\per(\alpha)$, based on estimates for the symbol length of a central simple algebra of given period. Aside from the cases described above, Matrzi's bounds are the best available.

	\subsection{The global period-index problem}
	\label{ssec:global_period_index_problem}

	We now explain a version of the period-index conjecture for orbifolds over fields.

	\begin{definition}
	\label{def:period_on_dm_stack}
	    Let $X$ be a connected Deligne--Mumford stack of finite type over a field $k$, and let $\alpha \in \Ho^2_{\et}(X, \G_m)^{\tors}$. The \emph{period} $\per(\alpha)$ of $\alpha$ is its order in $\Ho^2_{\et}(X, \G_m)$. The \emph{index} $\ind(\alpha)$ of $\alpha$ is the positive generator of the image of a rank homomorphism,
			\[
				\K_0(\dperf(X, \alpha)) \to \Z
			\]
		which arises by viewing $\dperf(X, \alpha)$ as $\dperf^1(\scx)$ for $\scx \to X$ a $\bmu_n$-gerbe of class $\alpha$. 
	\end{definition}

	We recall our convention that an \emph{orbifold} over a field $k$ is a Deligne--Mumford stack of finite type over $k$ with trivial generic stabilizers. 

	\begin{lemma}
	\label{lem:ind_on_base_field}
	    Let $X$ be a smooth, connected orbifold over $k$. If $\alpha \in \Ho^2_{\et}(X, \G_m)$, then 
	    \[
	    	\ind(\alpha) = \ind(\alpha_K),
	    \]
	    where $\alpha_K$ is the image of $\alpha$ in $\Br(K)$.
	\end{lemma}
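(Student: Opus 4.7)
The plan is to establish divisibility in both directions. Represent $\alpha$ by a $\bmu_n$-gerbe $\pi: \scx \to X$ (with $n = \per(\alpha)$), so that $\dperf(X, \alpha) \simeq \dperf^1(\scx)$, and the rank homomorphism defining $\ind(\alpha)$ becomes the usual rank $\rk: \K_0(\dperf^1(\scx)) \to \Z$.

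For the direction $\ind(\alpha_K) \mid \ind(\alpha)$, pull back along the generic point $\eta: \Spec K \to X$ to obtain a $\C$-linear functor
\[
\dperf^1(\scx) \longrightarrow \dperf^1(\scx_K),
\]
where $\scx_K = \scx \times_X \Spec K$ is a $\bmu_n$-gerbe over $\Spec K$ of class $\alpha_K$. Since $X$ is connected, this functor preserves the rank. On the other hand, any object of $\dperf^1(\scx_K)$ is represented by a bounded complex of $1$-twisted vector spaces, i.e., $\alpha_K$-twisted vector spaces, and its rank is the alternating sum of their dimensions; by Definition~\ref{def:k_field_index}(1), the image of the rank homomorphism from $\K_0(\dperf^1(\scx_K))$ is exactly $\ind(\alpha_K) \cdot \Z$. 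It follows that $\ind(\alpha) \cdot \Z \subset \ind(\alpha_K) \cdot \Z$.

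For the reverse divisibility $\ind(\alpha) \mid \ind(\alpha_K)$, I would spread out a minimal twisted vector space. Choose an $\alpha_K$-twisted vector space $V$ of dimension $\ind(\alpha_K)$, regarded as a $1$-twisted coherent sheaf on $\scx_K$. Since $X$ is an orbifold, the locus where the inertia is trivial is a dense open substack, and shrinking if necessary we get a dense open $W \subset X$ which is a scheme with function field $K$; standard limit arguments spread $V$ out to a $1$-twisted coherent sheaf $F$ on $\scx_W$. Let $j: \scx_W \hookrightarrow \scx$ denote the open immersion. Writing the quasi-coherent sheaf $j_* F$ as the filtered colimit of its coherent subsheaves on the Noetherian stack $\scx$ and applying the exact functor $j^*$, which recovers $F$, we find a coherent subsheaf $G \subset j_* F$ with $j^* G = F$.

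Finally, since $X$ (and hence $\scx$) is smooth, it has finite global Tor-dimension, so the coherent sheaf $G$ is perfect, i.e., an object of $\dperf^1(\scx)$. Its rank at the generic point equals $\dim V = \ind(\alpha_K)$, giving $\ind(\alpha) \mid \ind(\alpha_K)$ and completing the proof. The main subtlety to be careful with is the extension step: coherent extension from $\scx_W$ to $\scx$ uses only Noetherianness, and perfection of the extended sheaf is then automatic from smoothness — but it is crucial that $X$ is assumed smooth, since otherwise a coherent extension need not be perfect and the rank comparison could break down.
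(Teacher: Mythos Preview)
Your proof is correct and follows essentially the same approach as the paper: restrict perfect complexes to the generic point for one divisibility, and extend a minimal twisted vector space to a coherent (hence perfect, by smoothness) sheaf for the other. The only cosmetic differences are that the paper works with a $\G_m$-gerbe rather than a $\bmu_n$-gerbe, and cites \cite[Prop.~3.1.1.9]{lieblich_twisted_sheaves} for the coherent extension step where you give the argument directly via spreading out and writing $j_*F$ as a colimit of coherent subsheaves.
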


	\begin{proof}
	    We first show that $\ind(\alpha_K)$ divides $\ind(\alpha)$. Let $\scx \to X$ be a $\G_m$-gerbe of class $\alpha$, and let $\scx_K \to \Spec K$ be the fiber over the generic point. Choose a $1$-twisted vector bundle $V_K$ on $\scx_K$ of rank $\ind(\alpha_K)$. By \cite[Prop. 3.1.1.9]{lieblich_twisted_sheaves}, there exists a coherent extension of $V_K$ to a $1$-twisted coherent sheaf $V$ on $\scx$, and since $\scx$ is smooth, $V$ is quasi-isomorphic to a perfect complex $E$, with $\rk E = \ind(\alpha_K)$. We next show that $\ind(\alpha)$ divides $\ind(\alpha_K)$. Let $E$ be a $1$-twisted perfect complex on $\scx$ of rank $r$. Then the restriction of $E$ to $\scx_K$ is perfect, and the rank of any $1$-twisted locally free sheaf on $\scx_K$ is divisible by $\ind(\alpha)$.
	\end{proof}

	\begin{conjecture}[Global period-index] \label{conj:global_period_index}
		Let $X$ be a smooth, proper, connected orbifold of dimension $d$ over an algebraically closed field $k$. For any $\alpha \in \Ho^2_{\et}(X, \G_m)$,
		\[
		 	\ind(\alpha) \divides \per(\alpha)^{d - 1}.
		 \] 
	\end{conjecture}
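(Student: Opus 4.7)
The plan is to attack Conjecture \ref{conj:global_period_index} via the two-step strategy outlined in the introduction of the paper: construct a Hodge class of rank $\per(\alpha)^{d-1}$ in $\ktop_0(\dperf(X,\alpha))$, then prove its algebraicity. Since $\ind(\alpha)$ is the positive generator of the image of $\rk\colon \K_0(\dperf(X,\alpha)) \to \Z$, producing an algebraic class of rank $\per(\alpha)^{d-1}$ is exactly the desired divisibility.

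First I would reduce from an arbitrary algebraically closed field $k$ to $k = \C$ by a standard spreading-out and specialization argument: the orbifold $X$, the gerbe representing $\alpha$, and a putative twisted sheaf realizing the index are all of finite presentation, so both $\per(\alpha)$ and $\ind(\alpha)$ are constructible functions on the moduli of such data and may be compared on a $\C$-fiber of a suitable model. Next I would reduce to the case that $\alpha$ is topologically trivial. On the generic point this is automatic (Lemma \ref{lem:bloch_kato}), and by Lemma \ref{lem:ind_on_base_field} the index depends only on $\alpha_K$, so after possibly modifying $X$ (e.g. by passing to a root-stack compactification of a suitable open locus, as in Lemma \ref{lem:open_subvariety_gerbe}) one may arrange that $\alpha$ itself is topologically trivial without changing the index.

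With $\alpha$ topologically trivial, Corollary \ref{cor:twisted_mukai-b_field} identifies $\ktop_0(\dperf(X,\alpha)) \cong \ktop_0(X)^B$ for a rational $B$-field $B$ with $\exp(B) = \alpha$. One then seeks an integral Hodge class of rank $\per(\alpha)^{d-1}$ in this twisted Mukai structure. The bound of Theorem \ref{thm:hind_bound_intro}, $\hind(\alpha) \divides \per(\alpha)^{d-1} \cdot ((d-1)!)^{d-2}$, already shows this is possible up to the factor $((d-1)!)^{d-2}$, and in particular gives the full construction whenever $\per(\alpha)$ is prime to $(d-1)!$. To eliminate the $(d-1)!$ factor one would want to construct the class explicitly as $\exp(-B) \cdot (\text{rank } \per(\alpha)^{d-1})$ and clear denominators using a combination of a line bundle adjustment (as in the proof of Corollary \ref{cor:twisted_mukai-b_field}) and a careful $p$-local analysis prime by prime, imitating the arguments of Antieau--Williams for the topological period--index problem. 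Having produced the integral Hodge class, the final step is to invoke Perry's integral Hodge conjecture (Conjecture \ref{conj:ihc}) for $\dperf(X,\alpha)$, which asserts surjectivity of $\K_0(\dperf(X,\alpha)) \to \Hdg(\dperf(X,\alpha),\Z)$, to conclude that the class is algebraic.

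The main obstacle is clear: both the construction step and the algebraicity step are, at present, serious open problems. For the construction, the prime-by-prime analysis fails for $p \mid (d-1)!$ precisely at the level of Theorem \ref{thm:obstruction_intro}, which exhibits a Hodge-theoretic obstruction (the existence of $H \in \NS(X)$ making $v^2 + v\cdot H$ congruent to a Hodge class mod $2$) that is not known to vanish on general threefolds; any proof must either verify this obstruction in general or circumvent it. For algebraicity, Conjecture \ref{conj:ihc} is known to fail for some geometric $\scc$ (e.g.\ the Atiyah--Hirzebruch-style counterexamples of Theorems \ref{thm:gabber_intro} and \ref{thm:sb_abelian_threefold} in this paper), so at a minimum one must isolate a restricted class of twisted derived categories $\dperf(X,\alpha)$ for which the integral Hodge conjecture is expected, and prove it there. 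Thus the realistic form of this proposal is conditional: the Hodge-theoretic framework reduces Conjecture \ref{conj:global_period_index} to these two concrete problems, and any unconditional progress will come from settling them in specific families (as in the forthcoming work on abelian threefolds announced after Theorem \ref{thm:hind_bound_intro}).
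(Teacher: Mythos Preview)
The statement is a \emph{conjecture}, and the paper offers no proof of it; it is stated as an open problem, shown in the Remark following it to be equivalent to the classical period-index conjecture for complex function fields. Your proposal is not a proof either, and you acknowledge as much in the final paragraph: what you have written is a recapitulation of the two-step strategy the paper itself advertises in \S\ref{sec:introduction}, together with an honest accounting of the obstacles. In that sense there is nothing to compare---both you and the paper agree the problem is open.

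Two technical comments on your reductions. First, the passage to topologically trivial $\alpha$ is more delicate than you indicate. Corollary~\ref{cor:twisted_mukai-b_field} is stated for smooth proper \emph{varieties}; root-stack compactifications of the open locus from Lemma~\ref{lem:bloch_kato} produce orbifolds, for which the paper's access to the Hodge structure goes through the quasi-projective machinery of \S\ref{ssec:quasi_proj}--\S\ref{ssec:computations_on_quasi_proj} and Lemma~\ref{lem:properties_of_hdg_index}(2) rather than through a direct twisted Mukai structure on the compactification. Second, your invocation of the counterexamples in Theorems~\ref{thm:gabber_intro} and~\ref{thm:sb_abelian_threefold} as obstructions to the algebraicity step is slightly off-target. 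In the examples of Theorem~\ref{thm:products_of_curves} one has $\hind(\alpha)=\ell=\per(\alpha)$ while $\ind(\alpha)=\ell^k=\per(\alpha)^{d-1}$, so the non-algebraic Hodge class has rank strictly smaller than $\per(\alpha)^{d-1}$; an algebraic class of rank $\per(\alpha)^{d-1}$ does exist (any twisted sheaf of rank $\ind(\alpha)$), and the period-index bound holds there. What those examples demonstrate is that the full integral Hodge conjecture for $\dperf(X,\alpha)$ is too strong to hope for in general---only the algebraicity of the specific rank-$\per(\alpha)^{d-1}$ class is needed, and the failure of IHC for classes of smaller rank is irrelevant to Conjecture~\ref{conj:global_period_index}.
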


	\begin{remark}
		The global period-index conjecture over $\C$ (Conjecture~\ref{conj:global_period_index}) is equivalent to the period-index conjecture for complex function fields (Conjecture~\ref{conj:period_index}). Lemma~\ref{lem:ind_on_base_field} shows that the conjecture for function fields implies the global conjecture. In the other direction, let $K$ be a function field over $\C$. Then any Brauer class extends to a quasi-projective model of $K$, and one may apply Lemma~\ref{lem:root_stack_brauer_class} to pass to a smooth, proper orbifold.

		Alternatively, the discriminant-avoidance theorem of de Jong and Starr \cite{dejong_starr} implies that, in order to prove the period-index conjecture for complex function fields of transcendence degree $\leq d$, it is enough to verify the global period index conjecture for smooth, complex, projective varieties of dimension $\leq d$. However, the reduction is inexplicit. 
	\end{remark}

	\subsection{The Hodge-theoretic index}
	\label{ssec:hodge_theoretic_index}

	Let $X$ be a smooth, proper, connected orbifold over $\C$, and let $\alpha \in \Ho_{\et}^2(X, \G_m)$. Given a closed point $\Spec \C \to X$, the pullback
	\[
		\dperf(X, \alpha) \to \dperf(\Spec \C)
	\]
	defines a rank homomorphism $\ktop_0(X, \alpha) \to \ktop_0(\Spec \C) = \Z$, compatible with the rank homomorphism $\K_0(\dperf(X, \alpha)) \to \Z$ on algebraic $K$-theory. 

	\begin{definition}
	\label{def:hodge_theoretic_index}
	    The \emph{Hodge-theoretic index} $\hind(\alpha)$ is the positive generator of the image of the rank homomorphism
	\[
		\Hdg(\dperf(X, \alpha), \Z) \to \Z.
	\]
	\end{definition}

	Observe that $\hind(\alpha)$ divides $\ind(\alpha)$, and if they differ, then the integral Hodge conjecture fails for $\dperf(X, \alpha)$. In general, the Hodge-theoretic index may differ from both the period and the index, but it enjoys a number of the same formal properties, as indicated in Lemma~\ref{lem:properties_of_hdg_index} below.

	We first establish some basic properties of $\hind(\alpha)$. Since $\Br(X)$ is a torsion group, any nonzero element $\alpha$ admits a prime decomposition
	\[
		\alpha = \alpha_1 + \alpha_2 + \cdots + \alpha_n,
	\]
	characterized by the property that $\per(\alpha_i) = \ell_i^{r_i}$, for distinct primes $\ell_1, \dots, \ell_n$ and $r_i > 0$. 

	\begin{lemma}
	\label{lem:properties_of_hdg_index}
	    Let $X$ be a smooth, proper, connected orbifold over $\C$ with function field $K$, and let $\alpha \in \Ho^2_{\et}(X, \G_m)$.
	    \begin{enumerate} [label = (\arabic*)]
	    	\item If $f:X' \to X$ is generically finite, then $\hind(\alpha) \divides (\deg f) \cdot \hind(f^* \alpha)$.
	    	\item If $X'$ is smooth, proper connected orbifold over $\C$ which is birational to $X$, and $\alpha_K \in \Br(K)$ extends to a class $\alpha' \in \Ho^2_{\et}(X', \G_m)$, then $\hind(\alpha) = \hind(\alpha')$.
			\item If $\alpha = \alpha_1 + \cdots + \alpha_n$ is the prime decomposition of $\alpha$, then $\hind(\alpha) = \prod_i \hind(\alpha_i)$. 
			\item $\per(\alpha) \divides \hind(\alpha) \divides \ind(\alpha)$.
	    \end{enumerate}
	\end{lemma}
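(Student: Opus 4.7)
The plan is to prove the four assertions in the order (1), (2), (4), (3), since part (3) uses part (4) for a coprimality argument. For part (1), I use that the proper pushforward $f_*$ along $f: X' \to X$ (which is proper because $X, X'$ are proper and $f$ is generically finite) is induced by a $\C$-linear functor, so preserves Hodge classes, while multiplying $\ktop_0$-rank by $\deg f$. Applied to a Hodge class of rank $\hind(f^*\alpha)$, this produces a Hodge class in $\dperf(X, \alpha)$ of rank $(\deg f) \cdot \hind(f^*\alpha)$, giving $\hind(\alpha) \divides (\deg f) \cdot \hind(f^*\alpha)$.

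For part (2), I apply weak factorization (Example~\ref{ex:weak_factorization}) to reduce to the two elementary cases: $\pi: X' \to X$ is a blowup along a smooth substack $Z$ or a root stack along a smooth divisor $E$, both in the complement of a common open. In each case, $\dperf(X', \pi^*\alpha)$ has an $X$-linear semiorthogonal decomposition whose first factor is $\pi^*\dperf(X, \alpha)$ and whose remaining factors are twisted derived categories on $Z$ or $E$; since $Z, E$ have positive codimension in $X'$, the extra factors embed into $\ktop_0(\dperf(X', \pi^*\alpha))$ with vanishing rank. Hence the image of the rank homomorphism on Hodge classes is the same for $\alpha$ and $\pi^*\alpha$, so $\hind(\alpha) = \hind(\pi^*\alpha)$. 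The extension of $\alpha_K$ is unique along the chain by purity ($\Br(\cdot) \hookrightarrow \Br(K)$ on smooth proper orbifolds).

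For part (4), the divisibility $\hind(\alpha) \divides \ind(\alpha)$ is immediate from the factorization of the rank homomorphism through $\Hdg(\dperf(X, \alpha), \Z)$. For $\per(\alpha) \divides \hind(\alpha)$, I first handle the topologically trivial case using Corollary~\ref{cor:twisted_mukai-b_field}: writing $B = v_0/n$ with $n = \per(\alpha)$ and $v_0 \in \Ho^2(X^{\an}, \Z(1))$, a Hodge class $v = (r, c_1, \ldots)$ in $\ktop_0(X)^B$ satisfies that $c_1 + rv_0/n$ is a rational Hodge class in $\Ho^2(X^{\an}, \Q(1))$. Clearing denominators yields the integral Hodge class $nc_1 + rv_0$, which by Lefschetz $(1,1)$ lies in $\NS(X)$ and hence dies in $\Br(X)$; Remark~\ref{rem:rational_b_field} identifies its image under $\exp(-/n)$ with $r\alpha \in \Br(X)[n]$, forcing $r\alpha = 0$ and $n \divides r$. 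For general $\alpha$, I use Lemma~\ref{lem:bloch_kato} to find a dense open $U \subseteq X$ with $\alpha|_U$ topologically trivial, and use the quasi-projective twisted Mukai framework (Theorem~\ref{thm:open_twisted_mukai}) to obtain a rank-preserving morphism of Hodge structures $\ktop_0(\dperf(X, \alpha)) \to \wtop_0(\dperf(U, \alpha|_U))$; the topologically trivial case combined with $\per(\alpha) = \per(\alpha|_U)$ from purity then concludes.

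For part (3), the direction $\hind(\alpha) \divides \prod_i \hind(\alpha_i)$ follows from the tensor functor $\bigotimes_X \dperf(X, \alpha_i) \to \dperf(X, \alpha)$ applied to Hodge classes $v_i$ of rank $\hind(\alpha_i)$. For the reverse, I induct on the number of primes. By part (4), $\hind(\alpha_i)$ divides $\ind(\alpha_i)$, which is classically an $\ell_i$-power, so each $\hind(\alpha_i)$ is a power of $\ell_i$ and the $\hind(\alpha_i)$ are pairwise coprime; it therefore suffices to show $\hind(\alpha_i) \divides \hind(\alpha)$ for each $i$. Setting $\beta = \alpha - \alpha_i$, the inductive hypothesis gives $\hind(\beta) = \prod_{j \neq i}\hind(\alpha_j)$, coprime to $\ell_i$, and $\hind(-\beta) = \hind(\beta)$ via dualization of twisted sheaves. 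Choosing Hodge classes $v \in \Hdg(\dperf(X, \alpha), \Z)$ of rank $\hind(\alpha)$ and $w \in \Hdg(\dperf(X, -\beta), \Z)$ of rank $\hind(\beta)$, the tensor product $v \otimes w$ is a Hodge class in $\dperf(X, \alpha_i)$ of rank $\hind(\alpha) \cdot \hind(\beta)$, so $\hind(\alpha_i) \divides \hind(\alpha) \cdot \hind(\beta)$; coprimality then gives $\hind(\alpha_i) \divides \hind(\alpha)$. The main obstacle is this reverse direction of (3), which requires the tensor structure on twisted categories to match Brauer addition, the coprimality consequence of (4), and a clean induction.
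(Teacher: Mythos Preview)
Your argument is essentially correct and follows the same route as the paper, but there is one meaningful difference in part (3) worth flagging.

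For the reverse divisibility $\hind(\alpha_i)\mid\hind(\alpha)$, the paper tensors with an \emph{algebraic} object $E\in\dperf(X,\alpha_i-\alpha)$ of rank $\ind(\alpha_i-\alpha)$. Because $E$ is an honest object, ``tensor with $E$'' is an $X$-linear functor $\dperf(X,\alpha)\to\dperf(X,\alpha_i)$, and the induced map on $\ktop_0$ is automatically a morphism of Hodge structures; coprimality then uses the classical fact that $\ind(\alpha-\alpha_i)$ is prime to $\ell_i$. Your version instead tensors a Hodge class $v$ with a \emph{Hodge} class $w\in\Hdg(\dperf(X,-\beta),\Z)$. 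For this you need that the bilinear pairing
\[
\ktop_0(\dperf(X,\alpha))\times\ktop_0(\dperf(X,-\beta))\longrightarrow\ktop_0(\dperf(X,\alpha_i))
\]
carries Hodge classes to Hodge classes. This is true (embed all three categories as weight pieces of $\dperf(\scx)$ for a single $\bmu_N$-gerbe $\scx$, and use that the ring structure on $\ktop_0$ of a smooth proper DM stack respects the Hodge filtration), but it is not immediate and is exactly what the paper's choice of an algebraic multiplier sidesteps. Your detour through $\hind(-\beta)=\hind(\beta)$ and the induction on the number of primes is then unnecessary: once you use $\ind(\alpha_i-\alpha)$ instead of $\hind(-\beta)$, coprimality is classical and no induction is needed.

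For (4), your outline matches the paper's, but you elide one step the paper makes explicit: to get a Severi--Brauer variety $\P\to X$ restricting to one over $U$, the paper invokes part (2) together with Lemma~\ref{lem:open_subvariety_gerbe} to pass to a birational model of $X$ where such a $\P$ exists; your restriction map $\ktop_0(\dperf(X,\alpha))\to\wtop_0(\dperf(\P_U)_1)$ and the appeal to Lemma~\ref{lem:hodge_class_open_subvar} go through once that is arranged. Your separate treatment of the topologically trivial case is just the special case $U=X$ of that lemma and is not needed.
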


	\begin{proof}
		For (1), observe that
    \[
    	\rk f_* f^* v = (\deg f) \rk v
    \]
    for any $v \in \ktop_0(S, \alpha)$. Then (2) follows from (1) and weak factorization for a $\bmu_n$-gerbe of Brauer class $\alpha$, which (as noted in Example~\ref{ex:weak_factorization}) is compatible with the gerbe structure.

     For (3), it is not hard to see that $\hind(\alpha)$ divides $\prod \hind(\alpha_i)$, so we show that for each $i$, $\hind(\alpha_i)$ divides $\hind(\alpha)$. Let $E$ be an object in $\dperf(X, \alpha_i - \alpha)$ of rank $\ind(\alpha_i - \alpha)$. Since
     \[
     	\Fun_X(\dperf(X, \alpha), \dperf(X, \alpha_i)) \simeq \dperf(X, \alpha_i - \alpha),
     \]
     the object $E$ gives a functor from $\dperf(X, \alpha)$ to $\dperf(X, \alpha_i)$, and the induced map on topological $K$-theory sends a Hodge class of rank $\hind(\alpha)$ to a Hodge class of rank $\hind(\alpha) \cdot \ind(\alpha - \alpha_i)$. Since it is clear that $\hind(\alpha_i)$ divides $\ind(\alpha_i)$, $\hind(\alpha_i)$ is prime to $\ind(\alpha - \alpha_i)$, and so $\hind(\alpha_i)$ divides $\hind(\alpha)$.

    For (4), it is clear that $\hind(\alpha)$ divides $\ind(\alpha)$, so we show that $\per(\alpha)$ divides $\hind(\alpha)$. Let $U \subset X$ be a substack which is equivalent to a smooth quasi-projective variety. By Lemma~\ref{lem:bloch_kato}, we may suppose (after perhaps shrinking $U$) that the restriction of $\alpha$ to $U$ is topologically trivial. By Lemma~\ref{lem:open_subvariety_gerbe} and (2), we may assume that there is a Severi--Brauer variety $\P \to X$ whose restriction over $U$ has class $\alpha_U$.

   	It follows from Lemma~\ref{lem:lowest_weight_does_not_depend} that the induced map
    \[
    	\ktop_0(\dperf(\P)_1) \to \wtop_0(\dperf(\P_U)_1).
    \]
    is a morphism of Hodge structures. We conclude by observing that the rank of any Hodge class in the right-hand side is divisible by $\per(\alpha)$, by Theorem~\ref{thm:open_twisted_mukai} and Lemma~\ref{lem:hodge_class_open_subvar}.
	\end{proof}

	\begin{example} \label{ex:kresch}
		Let $X$ be a smooth, proper variety, and let $v \in \Ho^2(X^{\an}, \Z(1))$ be a cohomology class. In \cite{kresch_quarternion}, Kresch shows that if the image $\exp(v/2)$ of $v$ in $\Br(X)[2]$ has index $2$, then there exists a Hodge class $h \in \Hdg^4(X^{\an}, \Z(2))$ such that 
		\begin{equation} \label{eq:kresch_obstr}
			v^2 \equiv h \mod 2.
		\end{equation}
		Kresch observes that \eqref{eq:kresch_obstr} imposes a nontrivial condition on $v$, and so constructs examples of threefolds with Brauer classes of period $2$ and index $4$.

		From our calculation of the twisted Mukai structure, one may give a quick proof of Kresch's result. Let $B = v / 2$. If the index of $\alpha$ is $2$, then let 
		\[
			\ch(w) = (2, w_2, w_4, \dots)
		\]
		be the Chern character of a Hodge class $w$ of rank $2$ in $\ktop_0(X)^B$. Since $w$ is Hodge, 
		\[
			\exp(B) \cdot \ch(w) = (2, 2B + w_2 , B^2 + w_2 \cdot B + w_4, \dots)
		\]
		is Hodge in each degree. Since $2w_4$ is integral, 
		\begin{align*}
			4(B^2 + w_2 \cdot B + w_4) &= v^2 + 2 v \cdot w_2 + 4 \cdot w_4 \\
				&\equiv v^2 \mod 2,
		\end{align*}
		which recovers Kresch's obstruction \eqref{eq:kresch_obstr}.
	\end{example}

	\begin{remark}
		We briefly compare the Hodge-theoretic index with the \'etale index $\etind(\alpha)$ studied by Antieau \cite{antieau_divisibility},  \cite{antieau_et_ind}, and Antieau--Williams \cite{antieau_williams_godeaux}. Let $X$ be a smooth, proper scheme over $\C$, and let $\K_0^{\et}(X)$ be the \'etale $K$-theory of $X$. There is a sequence of morphisms
		\begin{equation} \label{eq:etale_index_rank}
			\begin{tikzcd}
				\K_0(X) \ar[r, "s"] & \K^{\et}_0(X) \ar[r] & \ktop_0(X) \ar[r, "\rk"] & \Z.
			\end{tikzcd}
		\end{equation}
		From \cite[Theorem 2.15]{thomason}, $s$ becomes an isomorphism after tensoring with $\Q$. It follows that the image of $\K_0^{\et}(X)$ in $\ktop_0(X)$ is contained in $\Hdg(\dperf(X), \Z)$. 

		Let $\alpha \in \Br(X)$ be a Brauer class, and let $\P \to X$ be a Severi--Brauer variety of class $\alpha$. As a summand of the sequence \eqref{eq:etale_index_rank} on $\P$, we obtain a sequence
		\begin{equation} \label{eq:etale_index_twisted}
			\begin{tikzcd}
				\K_0(X, \alpha) \ar[r] & \K^{\et}_0(X, \alpha) \ar[r] & \ktop_0(\dperf(X, \alpha)) \ar[r, "\rk"] & \Z.
			\end{tikzcd}
		\end{equation}
		From above, $\K^{\et}(X, \alpha)$ maps into $\Hdg(\dperf(X, \alpha), \Z)$.

		In \cite{antieau_et_ind}, Antieau defines the \emph{\'etale index} $\ind_{\et}(\alpha)$ of $\alpha$ to be the positive generator of $\rk(\K_0^{\et}(S, \alpha))$. In \cite{antieau_williams_godeaux}, Antieau and Williams construct Brauer classes on Serre-Godeaux varieties with $\per(\alpha)$ strictly less than $\ind_{\et}(\alpha)$.

		The Hodge-theoretic index provides a straightforward method for finding such examples. From the discussion above,
		\[
			\hind(\alpha) \divides \ind_{\et}(\alpha),
		\]
		so it suffices to construct examples where $\hind(\alpha)$ exceeds $\per(\alpha)$. For instance, any pair $(X, \alpha)$ such that $\alpha \in \Br(X)[2]$ is obstructed by Kresch's method (Example \ref{ex:kresch}) furnishes an example. Alternatively, it is straightforward to construct Brauer classes on abelian varieties of dimension $d \geq 3$ with $\hind(\alpha) > \per(\alpha)$, using Corollary~\ref{cor:twisted_mukai-b_field}.
	\end{remark}

	\subsection{Bounding the Hodge-theoretic index}
	\label{ssec:bounding_the_hodge_theoretic_index}

	In this section, we obtain an upper bound for the Hodge-theoretic index of a topologically trivial Brauer class.

	\begin{theorem}
	\label{thm:bounding_the_hodge_index}
	    Let $X$ be a smooth, proper variety of dimension $d$ over $\C$. For any topologically trivial Brauer class $\alpha \in \Br(X)$ with $\per(\alpha) = n$,
	    \[
	    	\hind(\alpha) \divides n^{d - 1} \cdot ((d - 1)!)^{d - 2}.
	    \]
	    In particular, if $n$ is prime to $(d - 1)!$, then $\hind(\alpha) \divides n^{d - 1}$.
	\end{theorem}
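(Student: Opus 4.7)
The plan is to use Corollary~\ref{cor:twisted_mukai-b_field} to translate the problem into a question about twisted Mukai structures, and then to construct a Hodge class of the required rank as an explicit integer combination of tensor powers of a topological line bundle. Since $\alpha$ is topologically trivial with $\per(\alpha) = n$, I pick $v \in \Ho^2(X^{\an}, \Z(1))$ such that $\alpha = \exp(v/n)$ in $\Br(X)$, and set $B = v/n$. By Corollary~\ref{cor:twisted_mukai-b_field}, which preserves rank, it suffices to exhibit an integral Hodge class in $\ktop_0(X)^B$ of rank dividing $n^{d - 1} \cdot ((d - 1)!)^{d - 2}$.

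Let $L \in \ktop_0(X)$ be a topological line bundle with $c_1(L) = v$ (which exists since $v \in \Ho^2(X^{\an}, \Z(1))$), and for integers $a_0, \dots, a_{d-1}$ consider the candidate $w = \sum_{i = 0}^{d - 1} a_i [L^{\otimes i}]$. Since $\ch([L^{\otimes i}]) = \exp(i v)$, a direct computation gives
\[
\exp(B) \cdot \ch(w) = \sum_{k \geq 0} \frac{v^k}{k!} \sum_{i = 0}^{d - 1} a_i \Big(i + \frac{1}{n}\Big)^k.
\]
In general $v^k$ is not a Hodge class for $1 \leq k \leq d - 1$, while the top-degree piece lies in $\Ho^{2d}(X^{\an}, \Q(d))$, which consists entirely of Hodge classes. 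It therefore suffices to impose the $d - 1$ vanishing conditions $\sum_i a_i(i + 1/n)^k = 0$ for $k = 1, \dots, d - 1$, together with the normalization $\sum_i a_i = N$.

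This is a Vandermonde system in the nodes $\beta_i = 1 + n i$, and a Lagrange interpolation (or Cramer's rule) computation yields
\[
a_i = \frac{(-1)^{d-1-i}\, N \prod_{j \neq i}(1 + n j)}{n^{d - 1} \cdot i! \cdot (d - 1 - i)!}.
\]
Since each $1 + n j \equiv 1 \pmod{n}$, no factor of $n$ cancels from the numerator; hence choosing $N = n^{d - 1}(d - 1)!$ clears every denominator simultaneously and produces an integral Hodge class of rank $n^{d - 1}(d - 1)!$ in $\ktop_0(X)^B$. Because $(d - 1)!$ divides $((d - 1)!)^{d - 2}$ for $d \geq 2$ (trivially for $d = 2$), this yields $\hind(\alpha) \divides n^{d - 1} \cdot ((d - 1)!)^{d - 2}$, and in particular $\hind(\alpha) \divides n^{d - 1}$ when $n$ is coprime to $(d - 1)!$. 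The main technical step is the denominator analysis of the Vandermonde inverse together with the coprimality of $n$ with each $1 + n j$; the rest is a formal calculation with Chern characters of line bundles and the fact that $\Ho^{2d}(X^{\an}, \Q(d))$ is automatically of Hodge type $(d, d)$.
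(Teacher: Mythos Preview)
Your argument is correct and in fact proves the sharper bound $\hind(\alpha) \mid n^{d-1}(d-1)!$, which strictly improves the paper's stated bound for $d \ge 4$ (and matches it for $d \le 3$); the paper itself notes in Remark~\ref{rem:improve_bound} that the exponent $((d-1)!)^{d-2}$ is not optimal. One small point: the sign in your Lagrange formula should be $(-1)^{i}$ rather than $(-1)^{d-1-i}$, but this is immaterial to integrality. You might also spell out why a $B$-field of the form $v/n$ with $v$ integral exists: since $\per(\alpha)=n$, any rational $B$-field $B'$ satisfies $nB' \in \Ho^2(X^{\an},\Z(1)) + \NS(X)_{\Q}$, and subtracting the rational N\'eron--Severi part leaves a $B$-field $v/n$ with $v$ integral.

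The route differs genuinely from the paper's. The paper works in cohomology: it writes down a polynomial $f(v)$ close to $n^{d-1}\exp(-v/n)$, and then uses Lemma~\ref{lem:exp_lemma} (an inductive argument on $\CP^d$ about lifting integral cohomology classes to topological $K$-theory) to show that a suitable multiple lies in the image of $\ch$; each inductive step costs a factor of $(d-1)!$, whence the exponent $d-2$. You instead stay entirely inside $\ktop_0(X)$ from the outset by taking an integer combination of line-bundle classes $[L^{\otimes i}]$, so integrality in $K$-theory is automatic and the only denominators to clear come from the single Vandermonde inversion, yielding just one factor of $(d-1)!$. Your approach is more elementary (no auxiliary lemma on CW complexes) and gives the better constant; the paper's approach, by contrast, makes the connection to the image of $K$-theory in rational cohomology explicit and suggests how further refinements (e.g.\ via Adams' integrality theorem) might proceed.
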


	\begin{remark}
		Theorem~\ref{thm:bounding_the_hodge_index} is a Hodge-theoretic analogue of a recent result of Antieau--Williams on the topological period-index conjecture \cite{top_per_ind}.
	\end{remark}

	We begin with a simple result on the image of topological $K$-theory in integral cohomology.

	\begin{lemma}
	\label{lem:exp_lemma}
	    Let $M$ be a finite CW complex of dimension $2d$. For any $v \in \Ho^2(M, \Z)$ and any polynomial $f(t) \in  \Z[t]$, there exists a constant $c \in \Q$ so that 
	    \[
	    	((d - 1)!)^{d - v_0(f) - 1}(f(v) + c \cdot v^d) \in \Ho^{\ev}(M, \Q)
	    \]
	    lies in the image of $\ktop_0(M)$ under the Chern character, where $v_0(f)$ is the order of vanishing of $f(t)$ at $0$.

	\end{lemma}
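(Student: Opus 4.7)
The plan is to reduce the lemma to constructing, for each $k$ with $0 \leq k \leq d$, a specific class $w_k \in \ktop_0(M)$ and a constant $c_k \in \Q$ such that
\[
    \ch(w_k) = ((d-1)!)^{d-k-1}(v^k + c_k v^d) \in \Ho^{\ev}(M, \Q).
\]
Given this sublemma, the general case follows by linearity in $f$: if $f(t) = \sum_{j \geq v_0(f)} a_j t^j$, then $\sum_j a_j ((d-1)!)^{j - v_0(f)} w_j$ has Chern character equal to $((d-1)!)^{d - v_0(f) - 1} f(v)$ modulo a rational multiple of $v^d$, so absorbing that multiple into the free parameter $c$ yields the lemma.

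To produce $w_k$, I would pick a topological line bundle $L$ on $M$ with $c_1(L) = v$ (which exists since $\Ho^2(-,\Z)$ classifies topological line bundles) and form the class $(L-1)^k = \sum_{i=0}^k (-1)^{k-i}\binom{k}{i}[L^i] \in \ktop_0(M)$. Its Chern character is the truncation of $(e^v - 1)^k$ modulo $v^{d+1}$, and the standard expansion
\[
    (e^v - 1)^k = \sum_{j \geq k} \frac{k!\,S(j,k)}{j!}\, v^j
\]
(with $S(j,k)$ the Stirling numbers of the second kind) gives an explicit form. Scaling by $((d-1)!)^{d-k-1}$ yields $W_k$ whose Chern character is
\[
    ((d-1)!)^{d-k-1}v^k + \sum_{j=k+1}^{d-1} \beta_{j,k}\, v^j + \gamma_k v^d,
    \qquad
    \beta_{j,k} = \frac{((d-1)!)^{d-k-1} k!\, S(j,k)}{j!}.
\]

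I would then prove the sublemma by downward induction on $k$. The base cases $k = d$ (take $w_d = 0$) and $k = d-1$ (take $w_{d-1} = (L-1)^{d-1}$) are immediate. For the inductive step, the crucial arithmetic fact is that
\[
    \beta_{j,k} = ((d-1)!)^{d-j-1} \cdot n_{j,k},
    \qquad
    n_{j,k} = ((d-1)!)^{j-k} \cdot \frac{k!\, S(j,k)}{j!} \in \Z
    \quad (k < j \leq d-1),
\]
which holds because $j!/k! = (k+1)(k+2)\cdots j$ is a product of $j-k$ integers each bounded by $d-1$, so each divides $(d-1)!$ and the whole product divides $((d-1)!)^{j-k}$. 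With $n_{j,k}$ an integer, the inductive hypothesis lets me form $w_k = W_k - \sum_{j=k+1}^{d-1} n_{j,k}\, w_j$; this kills the intermediate coefficients and leaves a Chern character of the form $((d-1)!)^{d-k-1} v^k + (\text{rational multiple of } v^d)$, as required.

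The main obstacle is the divisibility check $((d-1)!)^{j-k} k!/j! \in \Z$: without it the inductive cleanup would introduce denominators beyond those allowed by the statement. Everything else, including the reduction from arbitrary $f$ to monomials and the handling of degenerate cases $v_0(f) \geq d$, is bookkeeping once the sublemma is established.
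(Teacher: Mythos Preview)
Your argument is correct, and it takes a different route from the paper's. The paper first reduces to $M=\CP^d$ via the universal property of $\CP^\infty=K(\Z,2)$, then runs an \emph{ascending} induction on $v_0(f)$: at each step it invokes the degeneration of the Atiyah--Hirzebruch spectral sequence on $\CP^d$ to lift the leading integral term of $f(v)$ to some $z\in\ktop_0$, observes that $(d-1)!\cdot(f(v)-\ch(z))$ is integral below the top degree (using that $k!\,\ch_k$ is always integral), and applies the inductive hypothesis to the resulting polynomial with higher $v_0$.

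You instead stay on $M$, write down the explicit classes $(L-1)^k$, expand their Chern characters via Stirling numbers, and run a \emph{downward} induction on the monomial degree $k$, with the divisibility $((d-1)!)^{j-k}\cdot k!/j!\in\Z$ (for $k<j\le d-1$) as the only arithmetic input. Your approach is more constructive---it produces explicit $K$-theory classes without appealing to spectral-sequence degeneration or passing to $\CP^d$---while the paper's approach packages the integrality more abstractly and would adapt more readily if one wanted to replace $v$ by a higher-degree class. Both hinge on the same underlying fact (the $K$-theory of $\CP^d$ is generated by powers of $L-1$), but your execution is arguably more elementary.
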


	\begin{proof}
	    Since $\CP^{\infty} = \K(\Z, 2)$, it suffices to treat the case of its $2d$-skeleton $M = \CP^{d}$, $v = c_1(\sco_{\CP^d}(1))$. We use the following fact: If $x \in \Ho^{2k}(M, \Z)$ is an integral class, then there exists $z \in \ktop_0(M)$ such that
	    \[
	    	\ch(z) = x + \ch_{k + 1}(z) + \ch_{k + 2}(z) + \cdots
	    \]
	    The fact is a consequence of the degeneration of the Atiyah--Hirzebruch spectral sequence for $M$ at $\E_2$ \cite{atiyah_hirz}.

	    We proceed by ascending induction on $k = v_0(f)$. If $k = d - 1$, then the result is clear, since (from the fact) we may find $z \in \ktop_0(M)$ so that $f(v)$ and $\ch(z)$ differ by $c \cdot v^d$. If $k < d - 1$, then we may apply the fact to obtain a class $z \in \ktop_0(M)$ so that the polynomial $f(v) - \ch(z) \in \Q[v]$ vanishes to order $k + 1$ at $0$. Since $(d - 1)!\cdot \ch(z)$ is integral except in the top degree, there exists a constant $c_0 \in \Q$ so that
	    \[
	    	g(v) = (d - 1)! \cdot (f(v) - \ch(z)) + c_0 v^d
	    \]
	    is integral, and $v_0(g) = k + 1$. From the inductive hypothesis, there exists $c_1 \in \Q$ so that
	    \begin{align*}
	    	((d - 1)!)^{d - k - 2} \cdot (g(v) + c_1 v^d) &=  ((d - 1)!)^{d - k - 1}\left(f(v) - \ch(z) + \frac{1}{(d - 1)!} (c_0 + c_1) v^d \right) 
	    \end{align*}
	    lies in the image of $\ktop_0(M)$.
	\end{proof}

	\begin{proof}[Proof of Theorem~\ref{thm:bounding_the_hodge_index}]
	    Let $v \in \Ho^2(X^{\an}, \Z(1))$ be an integral class so that $v/n$ is a rational $B$-field for $\alpha$. Let
	    \begin{align*}
	    	f(v) &= n^{d - 1}\exp(-v/n) - \left(\frac{(-1)^d}{n \cdot d!} + \frac{(n^{d - 2})^d}{d!}\right)v^d \\
	    		&= n^{d - 1} - n^{d - 2}v + \frac{n^{d - 3}}{2} v^2 - \cdots + \frac{(-1)^{d - 1}}{(d - 1)!} v^{d - 1} - \frac{(n^{d - 2})^d}{d!} v^d
	    \end{align*}
	    For each $c \in \Q$, the class $((d - 1)!)^{d - 2} \cdot f(v) + cv^d$ is a rational Hodge class in $\ktop_0(X)^B \otimes \Q$. We claim that it lies in the image of the Chern character for an appropriate choice of $c \in \Q$. 

	    The polynomial 
	    \begin{align*}
	    	g(v) &= (d - 1)! \cdot \left(f(v) - n^{d - 1} + (\exp(n^{d - 2}v) - 1)\right) \\ 
	    		&= (d - 1)! \cdot \sum_{k = 2}^{d - 1} \frac{(n^{d - 2})^k + (-1)^k n^{d - k - 1}}{k!} \cdot v^k
	    \end{align*}
	    is integral, and $v_0(g) = 2$. By Lemma~\ref{lem:exp_lemma}, there exists a constant $c' \in \Q$ such that 
	    \[
	    	((d - 1)!)^{d - 3}\left( g(v) + c'v^d\right)
	   	\] 
	   	lies in the image of the Chern character, which implies the claim.
	\end{proof}

	\begin{remark}
	\label{rem:improve_bound}
		The factor of $((d - 1)!)^{d - 2}$ appearing in Theorem~\ref{thm:bounding_the_hodge_index} is not optimal. One could improve it by incorporating a more careful analysis of the image of $\ktop_0(\CP^d)$ in $\Ho^{\ev}(\CP^{d, \an}, \Q)$ into the proof of Lemma~\ref{lem:exp_lemma}, or, alternatively, with the integrality theorem of Adams \cite{adams}. Both routes lead to better but more complicated bounds, which unfortunately still contain a factor of $(d - 1)!$. 
	\end{remark}

	\subsection{Potential obstructions to period-index bounds}
	\label{ssec:potential_obstructions}

		Let $X$ be a smooth, proper threefold, and let $\alpha \in \Br(X)$ be a topologically trivial Brauer class. Then Theorem~\ref{thm:bounding_the_hodge_index} provides the bound
		\[
			\hind(\alpha) \divides 8,
		\]
		whereas the period-index conjecture would imply that $\hind(\alpha)$ divides $4$. The goal of this section is to analyze the situation in detail.

		\begin{theorem}
		\label{thm:obstruction_threefold}
		    Let $X$ be a smooth, proper variety over $\C$ with $v \in \Ho^2(X^{\an}, \Z(1))$, and let $\alpha$ be the image of $v/2$ in $\Ho^2_{\et}(X, \G_m)[2]$. If $\hind(\alpha)$ divides $4$, then there exists $H \in \NS(X)$ such that
		    \begin{equation}
		    \label{eq:obstr}
		     	v^2 + v \cdot H \in \Ho^4(X^{\an}, \Z(2))
		     \end{equation}
		     is congruent to a Hodge class modulo $2$.
		\end{theorem}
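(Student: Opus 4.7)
The plan is to apply Corollary~\ref{cor:twisted_mukai-b_field} with the rational $B$-field $B = v/2$, which lifts $\alpha$ under $\exp$ by Remark~\ref{rem:rational_b_field}. This identifies $\ktop_0(\dperf(X, \alpha))$ with the twisted Mukai structure $\ktop_0(X)^B$ as Hodge structures with preserved rank. Because $\hind(\alpha) \divides 4$, the image of the rank map $\Hdg(\ktop_0(X)^B) \to \Z$ contains $4$, so I can choose a Hodge class $w \in \ktop_0(X)^B$ of rank exactly $4$ (taking a scalar multiple of any rank-$\hind(\alpha)$ Hodge class if necessary). The argument below is a higher-rank refinement of Kresch's calculation in Example~\ref{ex:kresch}.

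Next, I would unpack the Hodge condition on $\exp(B) \ch(w)$ in degrees $2$ and $4$. Writing $\ch(w) = 4 + c_1(w) + \ch_2(w) + \cdots$, the only integrality inputs from topological $K$-theory are that $c_1(w) \in \Ho^2(X^{\an}, \Z(1))$ is integral and that $2 \ch_2(w) = c_1(w)^2 - 2 c_2(w)$ is integral in $\Ho^4(X^{\an}, \Z(2))$. The degree-$2$ component of $\exp(B) \ch(w)$ is $2v + c_1(w)$, which is integral and Hodge, hence by Lefschetz $(1,1)$ defines a class
\[
	H \;:=\; 2v + c_1(w) \;\in\; \NS(X),
\]
so $c_1(w) = H - 2v$. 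The degree-$4$ component $\ch_2(w) + \tfrac{1}{2} v\, c_1(w) + \tfrac{1}{2} v^2$ is Hodge, so after doubling,
\[
	h \;:=\; 2\ch_2(w) + v \, c_1(w) + v^2 \;\in\; \Ho^4(X^{\an}, \Z(2))
\]
is an integral Hodge class.

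Finally, I substitute and reduce mod $2$. Using $2\ch_2(w) \equiv c_1(w)^2 = (H - 2v)^2 \equiv H^2 \pmod{2}$ and $v \, c_1(w) = vH - 2v^2 \equiv vH \pmod{2}$, the expression for $h$ becomes
\[
	h \;\equiv\; H^2 + vH + v^2 \pmod{2}.
\]
Since $H^2$ is algebraic, the class $h - H^2$ is an integral Hodge class congruent to $v^2 + vH$ modulo $2$, as required. The main ``obstacle'' is purely bookkeeping: tracking which components of $\ch$ are integral versus only half-integral, which is what forces the multiply-by-$2$ step before reducing mod $2$. No dimension hypothesis on $X$ enters the cohomological extraction — only $\hind(\alpha) \divides 4$ is used, together with Corollary~\ref{cor:twisted_mukai-b_field} and Lefschetz $(1,1)$.
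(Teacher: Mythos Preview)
Your argument is essentially identical to the paper's: both take a Hodge class of rank $4$ in $\ktop_0(X)^{v/2}$, extract the degree-$2$ and degree-$4$ components of $\exp(B)\ch(w)$, set $H = 2v + c_1(w) \in \NS(X)$, double the degree-$4$ component, and reduce modulo $2$ using $2\ch_2(w) \equiv c_1(w)^2 \pmod{2}$. The bookkeeping matches line for line.

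There is one small omission. Corollary~\ref{cor:twisted_mukai-b_field} is stated for $\alpha \in \Br(X)$, whereas the theorem only gives $\alpha \in \Ho^2_{\et}(X, \G_m)[2]$; for a smooth proper but non-projective $X$ the equality $\Br(X) = \Ho^2_{\et}(X, \G_m)^{\tors}$ is not known, and the proof of the corollary genuinely uses a Severi--Brauer representative. The paper handles this by first observing that the conclusion of the theorem is invariant under blowups along smooth centers, so one may pass to a projective model by Chow's lemma and resolution, where $\Br = \Br'$ holds and the corollary applies. You should insert this reduction before invoking Corollary~\ref{cor:twisted_mukai-b_field}; otherwise the argument is complete and matches the paper's.
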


		\begin{proof}
		    If $\pi:X' \to X$ is a blowup along a smooth subvariety, then $v$ satisfies the conclusion of the theorem if and only if $\pi^* v$ does. Therefore, we may suppose that $X$ is projective, so that $\Br(X) = \Ho^2_{\et}(X, \G_m)$ and Corollary~\ref{cor:twisted_mukai-b_field} applies.

		    Suppose that $w \in \ktop_0(X)^B$ is a Hodge class of rank $4$. Then
			\[
				\exp(B) \cdot (4, w_2, w_4, \dots) = (4, 4B + w_2, 2B^2 + B \cdot w_2 + w_4, \dots)
			\]
			is Hodge in each degree, where $\ch(w) = (4, w_2, w_4, \dots)$. We may write:
			\begin{itemize}
				\item $H = 4B + w_2$, for $H \in \NS(X)$.
				\item $w_4 = \frac{1}{2}w_2^2 +  \epsilon$, where $\epsilon$ is an integral class.
			\end{itemize}
			The second point comes from the condition that $(4, w_2, w_4, \dots)$ lies in the image of $K$-theory, and the leading terms of Chern characters are integral.

		Let $Z = 2B^2 + B \cdot w_2 + w_4$. Then $Z$ is a rational Hodge class, and
		\begin{align*}
			 2Z &= v^2 + v(H - 2v) + (H - 2v)^2 + 2 \epsilon \\
			 	&\equiv v^2 + v \cdot H - H^2 \mod 2.
		\end{align*}
		In particular, $v^2 + v \cdot H$ is congruent to a Hodge class modulo $2$.
		\end{proof}

	\begin{question}
	\label{q:prime2}
		Let $X$ be a smooth, proper threefold over $\C$, and let $v \in \Ho^2(X^{\an}, \Z(1))$. Does there exist a class $H \in \NS(X)$ such that $v^2 + H \cdot v$ is congruent modulo $2$ to a Hodge class in $\Ho^4(X^{\an}, \Z(2))$?
	\end{question}

	A negative answer to Question~\ref{q:prime2} would imply that the period-index conjecture fails for Brauer classes of period $2$ on complex threefolds. 

	\begin{remark}
		The potential obstruction from Theorem~\ref{thm:obstruction_threefold} is reminiscent of a topological obstruction for the period-conjecture for period $2$ Brauer classes on threefolds proposed by Antieau and Williams \cite{antieau_williams_6_complex}, which was later shown to be ineffective \cite{spinc}.
	\end{remark}

\section{Counterexamples to the integral Hodge conjecture} 
	\label{sec:counterexamples_to_the_integral_hodge_conjecture}

	The goal of this section is to illustrate a method for producing non-algebraic integral Hodge classes in the following contexts: 
	\begin{enumerate} [label = \normalfont{(\roman*)}]
		\item The classical integral Hodge conjecture on a Severi--Brauer variety of class $\alpha$.
		\item The categorical integral Hodge conjecture for $\dperf(X, \alpha)$. 
	\end{enumerate}
	In both cases, the method is to begin with a Brauer class $\alpha$ such that $\per(\alpha) < \ind(\alpha)$, and then construct an integral Hodge class whose algebraicity would imply that $\per(\alpha) = \ind(\alpha)$. In \S \ref{ssec:abelian_threefolds}, we obtain counterexamples for Severi--Brauer varieties on abelian threefolds, and in \S \ref{ssec:products_of_curves}, we obtain counterexamples on $\dperf(X, \alpha)$, for $X$ a product of curves.

	\subsection{Abelian threefolds}
	\label{ssec:abelian_threefolds}

	By Gabber's method \cite[Appendice]{CT_gabber}, one may construct Brauer classes on very general abelian threefolds of period $2$ and index $4$. Alternatively, one may use twisted Mukai structures to produce examples of Brauer classes on abelian threefolds with 
	\[
		2 = \per(\alpha) < \hind(\alpha) = 4
	\]
	The following theorem shows that all such examples give rise to counterexamples to the integral Hodge conjecture:

	\begin{theorem}
	\label{thm:abelian_threefold}
	    Let $X$ be an abelian threefold over $\C$, and let $\alpha \in \Br(X)$ be a Brauer class with $\per(\alpha) = 2$ and $\ind(\alpha) > 2$. For any Severi--Brauer variety $\P \to X$ of class $\alpha$ and relative dimension $d$, the integral Hodge conjecture fails in $\Ho^{2d}(\P^{\an}, \Z(d))$.
	\end{theorem}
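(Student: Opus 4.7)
The strategy is to exhibit an integral Hodge class $\zeta \in \Ho^{2d}(\P^\an, \Z(d))$ whose restriction to a fiber $F = \pi^{-1}(x) \simeq \P^d$ has degree $2$. Since the generic fiber $\P_K$ (a Severi--Brauer over $K = k(X)$) has all $0$-cycle degrees divisible by $\ind(\alpha)$, any algebraic codimension-$d$ class on $\P$ has fiber-restriction of degree divisible by $\ind(\alpha)$; algebraicity of $\zeta$ would thus force $\ind(\alpha) \mid 2$, contradicting the hypothesis $\ind(\alpha) > 2$.

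Since $X$ is abelian, $\Ho^3(X^\an, \Z(1))$ is torsion-free, so $\alpha$ is topologically trivial; a topological trivialization $\P^\an \simeq \P(V^\topo)$ yields $h := c_1(\sco_\P(1)^\topo) \in \Ho^2(\P^\an, \Z(1))$. Pick a rational $B$-field $B = v/2$ for $\alpha$ with $v \in \Ho^2(X^\an, \Z(1))$; after adjusting $v$ by an integer class (which preserves $\exp(B) = \alpha$), one may arrange that $h - \pi^* B \in \Ho^2(\P^\an, \Q(1))$ is Hodge of type $(1,1)$. Indeed, using the Leray identification $\Ho^{0,2}(\P) \simeq \Ho^{0,2}(X)$ (from $\Ri^i \pi_* \sco_\P = 0$ for $i > 0$), both $h^{0,2}$ and $(\pi^* B)^{0,2}$ correspond to the image of $\alpha$ under the connecting map $\Ho^2(X, \sco_X) \to \Br(X)^\tors$ of the exponential sequence, and the integer ambiguity in $v$ can be used to make them agree. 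Equivalently, $h - \pi^* B$ is $c_1$ of the descent to $\P$ of the $0$-twisted topological line bundle $\sco_{\P_\scx}(1) \otimes (\pi')^* L$ on $\P_\scx$, where $L$ is the $1$-twisted topological line bundle on $\scx$ with $c_1(L) = B$ used in Theorem~\ref{thm:ktop_of_gerbe}.

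Now set
\[
	\zeta \;:=\; 2\,(h - \pi^* B)^d \;+\; \sum_{k = d - 3}^{d - 1} \gamma_k\,(h - \pi^* B)^k
\]
with $\gamma_k \in \Hdg^{2(d-k)}(X^\an, \Q(d-k))$ to be chosen. As a polynomial in the Hodge class $h - \pi^* B$ whose other coefficients are pullbacks of rational Hodge classes on $X$, $\zeta$ is rational Hodge, and its $h^d$-coefficient equals $2$. Integrality of $\zeta$ uses two features special to the abelian threefold: $v^i = 0$ for $i \geq 4$ by dimension, and $v \wedge v = 2 \sum_{I < J} a_I a_J\, e_{I \cup J}$ is divisible by $2$ in the exterior algebra on $\Ho^1(X^\an, \Z(1))$. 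Expanding $\zeta = \sum_j Z_j\, h^j$, every monomial contributing to $Z_j$ vanishes for degree reasons when $j < d-3$, so $Z_j = 0$ is integral; for $j = d-1, d-2, d-3$ one inductively picks $\gamma_{d-1} \in \NS(X)$, $\gamma_{d-2} \in \Hdg^4(X^\an, \Z(2))$, and $\gamma_{d-3} \in \Q$ to cancel the denominators of order $2$ (using $2 \mid v \wedge v$) and order $4$ (using the rational freedom in $\gamma_{d-3}$; note that $d$ is odd, so $d(d-1)/2$ and $(d-1)/2$ are integers, taking care of the intermediate factors). The principal obstacle is this integrality verification, which works precisely because the two abelian-threefold inputs reduce the a priori infinitely many integrality constraints to three, matched by the three available Hodge-class parameters $\gamma_{d-3}, \gamma_{d-2}, \gamma_{d-1}$.
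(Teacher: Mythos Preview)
Your approach is correct and essentially the same as the paper's: build an integral Hodge class in $\Ho^{2d}(\P^{\an},\Z(d))$ whose $h^d$-coefficient is $2$, then use that any algebraic class restricts to the generic fiber $\P_K$ as a zero-cycle of degree divisible by $\ind(\alpha)$. The paper carries this out more directly. Rather than adjusting an arbitrary $B$-field via the exponential sequence, it uses the algebraic line bundle $\sco_{\P}(2)$ (which exists since $\per(\alpha)=2$): writing $Q=c_1(\sco_{\P}(2))$ and $H$ for a topological hyperplane class, one has $2H-Q=\pi^*v$ for a unique $v\in\Ho^2(X^{\an},\Z(1))$, so $H-\pi^*v/2=Q/2$ is automatically Hodge. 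Your Hodge-class justification via $\Ho^{0,2}(\P)\simeq\Ho^{0,2}(X)$ is valid but less transparent. For the class itself, the paper simply writes down
\[
Z \;=\; 2H^d - d\,(\pi^*v)H^{d-1} + \tfrac{d(d-1)}{4}(\pi^*v^2)H^{d-2},
\]
which is exactly your $\zeta$ with $\gamma_{d-1}=\gamma_{d-2}=0$ and $\gamma_{d-3}=\tfrac{d(d-1)(d-2)}{24}v^3$; integrality is immediate from $2\mid v\wedge v$ (your parity observation that $d$ is odd is true but unneeded). Finally, the paper phrases the non-algebraicity via the Gysin map $\pi_*:\Ho^{2d}(\P)\to\Ho^0(X)$ rather than restriction to a fiber, but the content is identical. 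One small imprecision: in your first sentence, ``fiber $F=\pi^{-1}(x)$'' should be the \emph{generic} fiber, since restriction to a closed fiber $\P^d$ imposes no constraint on degrees of algebraic zero-cycles.
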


	The integral Hodge conjecture for abelian threefolds is a result of Grabowski \cite{grabowski}, so Theorem~\ref{thm:abelian_threefold} shows that the integral Hodge conjecture for a Severi--Brauer variety $\P \to X$ may fail even if it holds for the base. 

	\begin{proof}
	    Let $\pi:\P \to X$ be a Severi--Brauer variety of class $\alpha$ and relative dimension $d \geq 3$. The relative Picard group of $\P/X$ is generated by a line bundle $\sco_{\P}(2)$, whose restriction to the fiber over each $s \in X(\C)$ is $\sco_{\P^n_s}(2)$. Let $Q \in \Ho^2(\P^{\an}, \Z(1))$ be the first Chern class of $\sco_{\P}(2)$. 

	    Since $\alpha$ is topologically trivial, $\P^{\an} \to X^{\an}$ is a topological projective bundle, and there exists a class $H \in \Ho^2(\P^{\an}, \Z(1))$ whose restriction to each closed fiber is the hyperplane class. We may write
	    \[
	    	H = \frac{1}{2}Q + \frac{1}{2} \pi^* v,
	    \]
	    for a class $v \in \Ho^2(X^{\an}, \Z(1))$. Since $Q$ is algebraic, the class 
	    \[
	    	\exp(- \pi^* v/2) \cdot \exp(H) = (1, w_2, w_4, \dots)  \in \bigoplus_k \Ho^{2k}(X^{\an}, \Q(k))
	    \]
	    is Hodge in each degree. In particular, the component
	    \[
	    	w_{d} = \frac{1}{d!}H^{d} - \frac{1}{2(d - 1)!} (\pi^*v) H^{d - 1} + \frac{1}{2^2 \cdot 2(d - 2)!}  (\pi^*v^2) H^{d - 2} - \frac{1}{2^3 \cdot 3!(d - 3)!}  (\pi^*v^3) H^{d - 3}
	    \]
	    is a rational Hodge class. Consider the class
	    \begin{align*}
	    	Z &= 2 \cdot d! \cdot \left(w_d + \frac{1}{2^3 \cdot 3!(d - 3)!}(\pi^* v^3) H^{d - 3} \right) \\
	    		&= 2H^d - d (\pi^* v) H^{d - 1} + \frac{(d)(d - 1)}{4} (\pi^*v^2) H^{d - 2}.
	    \end{align*}
	    First, $Z$ is a rational Hodge class, since $w_d$ and $(\pi^* v^3)H^{d - 3}$ are Hodge classes, as $v^3$ is Hodge. Second, $Z$ is integral, since $v^2$ is divisible by $2$, as is any degree $2$ cohomology class on an abelian variety.

	    It remains to argue that $Z$ is not algebraic. The Gysin homomorphism
	    \[
	    	\Ho^{2d}(\P^{\an}, \Z(d)) \to \Ho^{6}(X^{\an}, \Z(6))
	    \]
	    sends $Z$ to $2 \cdot [X]$, where $[X]$ is the fundamental class. If $Z$ is algebraic, then there exists a zero-cycle of degree $2$ on the generic fiber $\P_{\eta}$ of $\pi$. However, $\ind(\alpha)$ coincides with the minimum degree of a zero-cycle on $\P_{\eta}$.
	\end{proof}

	\begin{remark}
		The assumption that $X$ is an abelian threefold, as opposed to an arbitrary threefold, enters in only two places: First, so that one may arrange that $\ind(\alpha) > 2$, and second, to ensure that $v^2$ is divisible by $2$.
	\end{remark}

	\begin{remark}
	\label{rem:sb_ihc}
		If $X$ is a smooth, proper variety over $\C$, the truth of the integral Hodge conjecture for a Severi--Brauer variety $\P \to X$ depends only on the subgroup of $\Br(X)$ generated by $[\P]$. 

		Indeed, in the trivial case when $\P \to X$ is a projective bundle, then the integral Hodge conjecture for $\P$ is equivalent to the integral Hodge conjecture for $X$. In the general case, if $\P$ and $\P'$ are Severi--Brauer varieties which generate the same subgroup of $\Br(X)$, then we may consider the diagram
		\[
			\begin{tikzcd}
				\P \times_X \P' \ar[d, "\pi_1"] \ar[r, "\pi_2"] & \P' \ar[d] \\
				\P \ar[r] & X.
			\end{tikzcd}
		\]
		We observe that $\pi_1$ and $\pi_2$ are projective bundles.
	\end{remark}

 	\subsection{Products of curves}
 	\label{ssec:products_of_curves}

	Let $C$ be a smooth, projective curve of genus $g \geq 2$ over $\C$, and let $E_1, \dots, E_k$ be elliptic curves over $\C$, with $2 \leq k \leq g$. Consider a symplectic basis
	\[
		s_1, \dots, s_g, t_1, \dots, t_g \in \Ho^1(C^{\an}, \Z),
	\]
	and for each $i$, let $u_i \in \Ho^1(E_i^{\an}, \Z)$ be a nonzero class. Let $X = C \times \prod_{i = 1}^{k} E_i$, and define the class
	\[
		v = 2 \pi i \cdot \sum_{i = 1}^k s_i \cup u_i \in \Ho^2(X^{\an}, \Z(1)).
	\]
	For each prime $\ell$, let $\alpha_{\ell} \in \Br(X)[\ell]$ be the Brauer class given by the rational $B$-field $v/\ell$.

 	\begin{theorem}
 	\label{thm:products_of_curves}
 	    For a prime $\ell$, let $X = C \times \prod_{i = 1}^k E_i$ and $\alpha_{\ell} \in \Br(X)$ be as above. If $C, E_1, \dots, E_k$ are very general, then the integral Hodge conjecture fails for $\dperf(X, \alpha)$.
 	\end{theorem}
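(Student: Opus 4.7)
The plan is to establish $\hind(\alpha_\ell) < \ind(\alpha_\ell)$, which then forces the integral Hodge conjecture to fail for $\dperf(X, \alpha_\ell)$: a Hodge class realizing $\hind(\alpha_\ell)$ cannot lie in the image of the rank homomorphism on $\K_0$, whose image is $\ind(\alpha_\ell) \cdot \Z$. By Corollary~\ref{cor:twisted_mukai-b_field}, the Hodge structure $\ktop_0(\dperf(X, \alpha_\ell))$ is identified with the twisted Mukai structure $\ktop_0(X)^{v/\ell}$, so the entire computation can be carried out in $\ktop_0(X)^{v/\ell}$.

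For the construction of a small Hodge class, the key observation is that $\{s_1, \ldots, s_g\}$ is half of a symplectic basis of $\Ho^1(C, \Z)$, so $s_i \cup s_j = 0$ in $\Ho^2(C, \Z)$ for all $i, j$. Consequently $v^2 = 0$ in $\Ho^*(X, \Q)$, so $v^j = 0$ for $j \geq 2$ and $\exp(v/\ell) = 1 + v/\ell$. Since $\Ho^*(X, \Z)$ is torsion-free, there exists a topological complex line bundle $L$ on $X$ with $c_1(L) = v$; set
\[
    w := (\ell + 1) [\sco_X] - [L] \in \ktop_0(X).
\]
Then $w$ has rank $\ell$ and Chern character $\ch(w) = (\ell, -v, 0, \ldots, 0)$, and
\[
    \exp(v/\ell) \cdot \ch(w) = (1 + v/\ell)(\ell - v) = \ell,
\]
which is Hodge in every degree. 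Hence $w$ represents a Hodge class of rank $\ell$ in $\ktop_0(X)^{v/\ell}$. Combined with $\per(\alpha_\ell) \mid \hind(\alpha_\ell)$ from Lemma~\ref{lem:properties_of_hdg_index}(4), and the observation that $\per(\alpha_\ell) = \ell$ for very general data (since $v/\ell$ is not congruent modulo $\Ho^2(X, \Z(1))$ to an element of $F^1$), this yields $\hind(\alpha_\ell) = \ell$.

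The main obstacle is the lower bound $\ind(\alpha_\ell) \geq \ell^2$ for very general $C, E_1, \ldots, E_k$ with $k \geq 2$. For this I would invoke a variant of Gabber's method \cite[Appendice]{CT_gabber}: on a Severi--Brauer variety $\P \to X$ of class $\alpha_\ell$, one uses Chow-theoretic / intersection-theoretic obstructions on $\P$ to exclude the existence of a $0$-cycle of degree $\leq \ell$ on the generic fiber, exploiting the fact that $k \geq 2$ provides at least two independent cup-product pairs $(s_i, u_i)$ in $\Ho^2(X, \Z(1))$. Gabber's original construction establishes the analogous statement in the abelian threefold case; extending to higher-genus $C$ rests on the fact that the cup-product independence of $(s_i, u_i)$ used in the Chow-theoretic obstruction is preserved under very general variation of $(C, E_1, \ldots, E_k)$, since the relevant classes in $\Ho^1(C) \otimes \Ho^1(E_i)$ remain transcendental (not Hodge) in very general fibers.

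Granting the lower bound, $\hind(\alpha_\ell) = \ell < \ell^2 \leq \ind(\alpha_\ell)$, so the Hodge class $w$ has rank $\ell$ not divisible by $\ind(\alpha_\ell)$ and therefore cannot arise from an $\alpha_\ell$-twisted perfect complex. This exhibits the required non-algebraic integral Hodge class and completes the argument.
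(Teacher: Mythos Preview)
Your proposal is correct and follows essentially the same approach as the paper: construct the rank-$\ell$ Hodge class $w = \ell + (1 - [L])$ in $\ktop_0(X)^{v/\ell}$ using $v^2 = 0$, and invoke Gabber's result for the index bound. One minor point: the paper cites Gabber's appendix to obtain $\ind(\alpha_\ell) = \ell^k$ directly (his argument works for products of curves via residues and specialization, not via Chow-theoretic obstructions on Severi--Brauer varieties as you sketched), so your description of the mechanism behind the index bound is not quite accurate, though the citation itself suffices.
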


 	\begin{proof}
 	    According to Gabber's result \cite[Appendice]{CT_gabber}, $\ind(\alpha_{\ell}) = \ell^{k}$. Therefore, it suffices to show that $\hind(\alpha) = \ell$. 

 	    We observe that $v^2 = 0$. If $L$ be a topological line bundle on $X$ with $c_1(L) = v$, then consider the class
 	    \[
 	     	w = \ell + (1 - [L]) \in \ktop_0(X), \hspace{5mm} \ch(w) = (\ell, -v, 0, \dots, 0).
 	     \] 
 	    Since $\exp(v/\ell)\ch(w)$ is Hodge, $w$ is a Hodge class for the twisted Mukai structure $\ktop_0(X)^{v/\ell}$.
 	\end{proof}

 	\begin{corollary}	
 	\label{cor:products_of_curves}
 	    Let $\ell$ be a prime. In the context of Theorem~\ref{thm:products_of_curves}, if $\P \to X$ is a Severi--Brauer variety of class $\alpha_{\ell}$, then the integral Hodge conjecture fails for $\P$.
 	\end{corollary}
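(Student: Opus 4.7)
The plan is to use Bernardara's semiorthogonal decomposition to reduce to Theorem~\ref{thm:products_of_curves}, and then to promote the resulting failure of the categorical integral Hodge conjecture for $\dperf(\P)$ to a failure of the classical integral Hodge conjecture for $\P$, using that $\alpha_\ell$ is topologically trivial.

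First, by Bernardara's decomposition (Example~\ref{ex:bernardara}),
\[
	\dperf(\P) = \langle \dperf(\P)_0, \dperf(\P)_1, \dots, \dperf(\P)_{\ell - 1}\rangle,
\]
with $\dperf(\P)_k$ a twisted derived category over $X$ of class $\alpha_\ell^k$. Additivity of the Voisin group along semiorthogonal decompositions (\S\ref{ssec:hodge_filtration}) then yields
\[
	\rV(\dperf(\P)) = \bigoplus_{k=0}^{\ell - 1} \rV(\dperf(\P)_k).
\]
Since the $k=1$ summand equals $\rV(\dperf(X, \alpha_\ell))$, which is nonzero by Theorem~\ref{thm:products_of_curves}, we conclude $\rV(\dperf(\P)) \neq 0$; that is, the integral Hodge conjecture fails for $\dperf(\P)$.

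Next, I promote this to the failure of the classical integral Hodge conjecture for $\P$. Since $\alpha_\ell$ admits the rational $B$-field $v/\ell$, its image in $\Ho^3(X^{\an}, \Z(1))$ vanishes, so the topological $\G_m$-gerbe underlying $\P^{\an}$ is trivial and $\P^{\an} \to X^{\an}$ is the projectivization of a topological rank-$\ell$ complex vector bundle on $X^{\an}$. By the Leray--Hirsch theorem, $\Ho^*(\P^{\an}, \Z)$ is a free $\Ho^*(X^{\an}, \Z)$-module; since $X$ is a product of smooth curves, $\Ho^*(X^{\an}, \Z)$ is torsion-free by K\"unneth, and hence so is $\Ho^*(\P^{\an}, \Z)$. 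By Remark~\ref{rem:implications_ihc}, the torsion-freeness of $\Ho^*(\P^{\an}, \Z)$ forces the Atiyah--Hirzebruch spectral sequence for $\P$ to degenerate, so the integral Hodge conjecture in all degrees for $\P$ would imply the integral Hodge conjecture for $\dperf(\P)$; contrapositively, since the latter fails, the former fails in some degree.

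The main subtlety is precisely the bridge between the categorical and classical versions of the integral Hodge conjecture, which requires the degeneration of the Atiyah--Hirzebruch spectral sequence for $\P$. This is where the topological triviality of $\alpha_\ell$ (together with the torsion-freeness of the cohomology of the base $X$) is indispensable: without these inputs one would only obtain a failure of the categorical integral Hodge conjecture, with no direct implication for the classical one.
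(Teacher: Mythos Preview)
Your argument is essentially the paper's own proof, spelled out in more detail; the paper's one-line version just says the cohomology of $X$ is torsion-free and then invokes Remark~\ref{rem:implications_ihc}, leaving implicit both the Bernardara decomposition and the passage from torsion-freeness on $X$ to torsion-freeness on $\P$, which you make explicit via topological triviality and Leray--Hirsch.

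One correction: you have written the Bernardara decomposition with $\ell$ pieces and described $\P^{\an}$ as the projectivization of a rank-$\ell$ bundle, implicitly assuming $\P$ has relative dimension $\ell-1$. But a Severi--Brauer variety of class $\alpha_\ell$ has relative dimension $n-1$ where $\ind(\alpha_\ell)=\ell^k$ divides $n$; since $k\ge 2$, there is no Severi--Brauer variety of class $\alpha_\ell$ with relative dimension $\ell-1$. Replace $\ell$ by $n$ in both places. The argument is unaffected, since you only use that $\dperf(\P)_1\simeq\dperf(X,\alpha_\ell)$ appears as a semiorthogonal component.
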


 	\begin{proof}
 	    The cohomology ring of $X$ is torsion-free, so the integral Hodge conjecture for $\P$ in all degrees implies the integral Hodge conjecture for $\dperf(\P)$ (Remark~\ref{rem:implications_ihc}).
 	\end{proof}

\section{The integral Hodge conjecture for DM surfaces} 
	\label{sec:the_integral_hodge_conjecture_for_DM_surfaces}

	In \S \ref{ssec:preliminaries_on_dm_curves}, we obtain some preliminary results on the topological $K$-theory of Deligne--Mumford curves. In \S \ref{ssec:dm_surfaces}, we prove the integral Hodge conjecture for Deligne--Mumford surfaces.

	\subsection{Preliminaries on DM curves}
	\label{ssec:preliminaries_on_dm_curves}

	Let $C$ be a smooth, proper Deligne--Mumford stack of pure dimension $1$ over $\C$. Then $C$ is a gerbe over its total rigidification $C^{\rig}$ (Example~\ref{ex:total_rig}), which a smooth, proper orbifold. By Corollary~\ref{cor:non-abelian_gerbe_cover} and Tsen's theorem, there exists a representable finite \'etale cover $C' \to C^{\rig}$ and an equivalence 
	\begin{equation} \label{eq:curve_reduction}
		\dperf(C) \cong \dperf(C').
	\end{equation}
	As in the classical setting, $\ktop_0(\dperf(C))$ is spanned by algebraic classes:

	\begin{lemma}
	\label{lem:ihc_curve_surjective}
	    Let $C$ be a smooth, proper Deligne--Mumford curve over $\C$. Then
	    \[
	    	\K_0(\dperf(C)) \longrightarrow \ktop_0(\dperf(C))
	    \]
	    is surjective.
	\end{lemma}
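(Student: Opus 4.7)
The plan is to use the equivalence \eqref{eq:curve_reduction} to replace $C$ by a smooth proper orbifold curve, and then to decompose the resulting derived category via the semiorthogonal decomposition for iterated root stacks, reducing to cases for which the surjectivity is immediate.

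By \eqref{eq:curve_reduction}, it suffices to prove the surjectivity for $\dperf(C')$, where $C'$ is a smooth proper orbifold curve: the Chern character $\K_0 \to \ktop_0$ is functorial in $\C$-linear categories (property \ref{pt:chern_square}), so the question is invariant under $\C$-linear equivalence. (Tsen's theorem is used in \eqref{eq:curve_reduction} to ensure that the Brauer class $\alpha$ produced by Corollary~\ref{cor:non-abelian_gerbe_cover} is trivial, since $\Br(C')$ injects into the Brauer group of the function field of $C'$, which has transcendence degree one over $\C$.)

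Next, a smooth proper orbifold curve $C'$ over $\C$ is an iterated root stack $\bar{C'}_{r^{-1} E}$ over its coarse moduli space $\bar{C'}$, where $E = \sum_{i=1}^{n} p_i$ is the divisor of stacky points and $r = (r_1,\ldots,r_n)$ records the orders of the (necessarily cyclic) stabilizer groups. Since $E$ is a disjoint union of reduced points, hence a simple normal crossing divisor, the iterated version of the semiorthogonal decomposition \eqref{eq:root_stack_sod_dec6} yields
\[
    \dperf(C') = \langle \scc_{1,1}, \ldots, \scc_{1, r_1 - 1}, \ldots, \scc_{n, 1}, \ldots, \scc_{n, r_n - 1},\ \pi^*\dperf(\bar{C'})\rangle,
\]
with each $\scc_{i,j}$ equivalent to $\dperf(\Spec \C)$.

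Finally, both $\K_0$ and $\ktop_0$ are additive under semiorthogonal decompositions and the comparison map $\K_0 \to \ktop_0$ respects this additivity, so it is enough to verify the surjectivity on each summand. For each copy of $\dperf(\Spec \C)$, the map $\K_0 \to \ktop_0$ is the identity $\Z \to \Z$, and for $\dperf(\bar{C'})$ with $\bar{C'}$ a smooth proper curve, $\ktop_0(\bar{C'}) \simeq \Z \oplus \Z$ is generated by the algebraic classes of $\mathcal{O}_{\bar{C'}}$ and a skyscraper sheaf at a closed point. The step most likely to require care is the iterated root stack presentation of $C'$; I would either verify this directly from the local structure of orbifold curves in characteristic zero (where each stacky point is étale-locally of the form $[\Spec \C\{t\}/\mu_r]$), or alternatively appeal to weak factorization (Example~\ref{ex:weak_factorization}), using that the blowup of a closed point on a smooth one-dimensional DM stack is an isomorphism, so the chain connecting $C'$ to $\bar{C'}$ must consist entirely of root stacks at points.
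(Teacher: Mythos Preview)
Your proof is correct and follows essentially the same approach as the paper: reduce via \eqref{eq:curve_reduction} to a smooth proper orbifold curve, then use the root-stack semiorthogonal decomposition over the coarse space to split $\dperf$ into a copy of $\dperf$ of a smooth proper curve together with copies of $\dperf(\Spec \C)$, on each of which surjectivity is clear. Your additional remarks justifying the root-stack presentation of $C'$ are fine but not strictly needed; the paper simply invokes the decomposition \eqref{eq:root_stack_sod_dec6} directly.
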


	\begin{proof}
	    From \eqref{eq:curve_reduction}, it suffices to consider the case when $C$ is a smooth, proper orbifold. Let $p_1, \dots, p_r$ be the orbifold points of $C$. The coarse space $C^{\cs}$ is smooth, and from the semiorthogonal decomposition of a root stack (\S \ref{ssec:root_stacks}), 
	    \[
	    	\dperf(C) = \langle \dperf(C^{\cs}), \scc \rangle,
	    \]
	    where $\scc$ admits a semiorthogonal decomposition into copies of $\dperf(p_i)$. Since the desired result holds for $\dperf(\Spec \C)$ and $\dperf(C^{\cs})$, it holds for $\dperf(C)$.
	\end{proof}

	\begin{lemma}
	\label{lem:dm_curve}
	    Let $D$ be a simple normal crossing Deligne--Mumford curve over $\C$. Then 
	    \[
	    	\K_0(\dcoh(D)) \to \ktop_0(\dcoh(D))
	    \]
	    is surjective.
	\end{lemma}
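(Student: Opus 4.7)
The approach is to excise the nodes of $D$ and reduce to the smooth case settled by Lemma~\ref{lem:ihc_curve_surjective}. Let $Z \subset D$ denote the reduced closed substack of nodes and $U = D \setminus Z$ its open complement. Then $Z$ is a $0$-dimensional smooth DM stack (a finite disjoint union of classifying stacks $\B G_i$ of finite groups), while $U$ is a smooth DM curve obtained by removing finitely many points from the normalization of $D$. Combining Lemma~\ref{lem:excision_sequence} in topological $K$-theory with the Quillen $G$-theory localization sequence on the algebraic side produces a commutative diagram
\[
\begin{tikzcd}
\K_0(\dcoh(Z)) \ar[r] \ar[d, "\alpha"] & \K_0(\dcoh(D)) \ar[r] \ar[d, "\beta"] & \K_0(\dcoh(U)) \ar[d, "\gamma"] \ar[r] & 0 \\
\ktop_0(\dcoh(Z)) \ar[r] & \ktop_0(\dcoh(D)) \ar[r] & \ktop_0(\dcoh(U)),
\end{tikzcd}
\]
in which the top row is exact and the bottom row is exact at the middle term.

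The plan is then to verify that $\alpha$ and $\gamma$ are surjective, after which a five-lemma style chase yields the surjectivity of $\beta$: given $v \in \ktop_0(\dcoh(D))$, lift its image in $\ktop_0(\dcoh(U))$ through $\gamma$, further lift to $\K_0(\dcoh(D))$ via right-exactness of the top row, subtract from $v$, and handle the remaining class (which lies in the image of $\ktop_0(\dcoh(Z))$ by exactness) using $\alpha$. Surjectivity of $\alpha$ is immediate: each $\dcoh(\B G_i) \simeq \Rep(G_i)$ is semisimple, so $\K_0 = R(G_i) = \ktop_0$. For $\gamma$, I would pick a smooth proper compactification $\bar U$ of $U$ (for instance the normalization $\tilde D$ of $D$, which is a disjoint union of smooth proper DM curves) and apply the analogous excision diagram for $U \subset \bar U$. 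Since $\bar U \setminus U$ is a finite disjoint union of classifying stacks, the boundary map $\ktop_0(\dcoh(U)) \to \ktop_{-1}(\dcoh(\bar U \setminus U))$ vanishes, so $\ktop_0(\dperf(\bar U)) \to \ktop_0(\dcoh(U))$ is surjective; composing with the surjectivity of $\K_0(\dperf(\bar U)) \to \ktop_0(\dperf(\bar U))$ from Lemma~\ref{lem:ihc_curve_surjective} and of $\K_0(\dperf(\bar U)) \to \K_0(\dcoh(U))$ from $G$-theory localization gives $\gamma$ surjective.

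The main obstacle is the subsidiary vanishing $\ktop_{-1}(\dperf(\B G)) = 0$ underlying both steps. I would deduce it from the fact that $\dperf(\B G) \simeq \Rep(G)$ is semisimple over $\C$, so its noncommutative Hodge-to-de Rham spectral sequence degenerates trivially and its topological $K$-theory is concentrated in even degrees. Granted this, the diagram chase assembles into a complete proof.
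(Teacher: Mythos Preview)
Your proposal is correct and follows essentially the same route as the paper: excise the singular locus $Z$, use the localization diagram to reduce to surjectivity for $Z$ (trivial, as $\dperf(Z)$ decomposes into copies of $\dperf(\Spec\C)$ via Maschke) and for the smooth open $U$, and then handle $U$ by compactifying inside the normalization and applying Lemma~\ref{lem:ihc_curve_surjective} together with a second localization. Your explicit justification of the vanishing $\ktop_{-1}(\dperf(\B G)) = 0$ is a point the paper leaves implicit, but otherwise the arguments match.
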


	\begin{proof}
	    Let $D^\circ \subset D$ be the smooth locus of $D$, and let $Z = D - D^{\circ}$ be the complement. Note that $Z$ is smooth. From Lemma~\ref{lem:excision_sequence}, there is a diagram with exact rows
	    \[
	    	\begin{tikzcd}
	    		\K_0(\dperf(Z)) \ar[d, "a", "\sim"'] \ar[r] & \K_0(\dcoh(D)) \ar[d, "b"] \ar[r] & \K_0(\dperf(D^{\circ})) \ar[r] \ar[d, "c"] & 0 \\
	    		\ktop_0(\dperf(Z)) \ar[r] & \ktop_0(\dcoh(D)) \ar[r] & \ktop_0(\dperf(D^{\circ})) \ar[r] & 0.
	    	\end{tikzcd}
	    \]
	    The surjectivity on the right and the fact that $a$ is an isomorphism follow from the observation that $\dperf(Z)$ is equivalent to a product of copies of $\dperf(\Spec \C)$. 

	    Our goal is to show that $b$ is surjective. Since $a$ is an isomorphism, it suffices to show that $c$ is surjective. But $D^{\circ}$ is open inside of $D' = \coprod_i D_i$, where each $D_i$ is a gerbe over an irreducible component $D_i^{\cs}$ of $D^{\cs}$. We apply Lemma~\ref{lem:excision_sequence} again to obtain an exact sequence
	    \[
	    	\begin{tikzcd}
	    		\ktop_0(\dperf(Z')) \ar[r] & \ktop_0(\dperf(D')) \ar[r] & \ktop_0(\dperf(D^{\circ})) \ar[r] & 0,
	    	\end{tikzcd}
	    \]
	    where $Z'$ is a finite set of stacky points. By Lemma~\ref{lem:ihc_curve_surjective}, $\ktop_0(\dperf(D'))$ is spanned by algebraic classes, so the same holds for $\ktop_0(\dperf(D^{\circ}))$.
	\end{proof}

	\subsection{DM surfaces}
	\label{ssec:dm_surfaces}

	Our goal is to prove the following theorem:

	\begin{theorem}
	\label{thm:ihc_for_dm_surfaces}
	    Let $X$ be a smooth, proper Deligne--Mumford surface over $\C$. Then the integral Hodge conjecture holds for $\dperf(X)$.
	\end{theorem}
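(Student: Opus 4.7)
The plan follows the sketch in the introduction: use total rigidification to reduce to an integral Hodge conjecture for a twisted derived category on an orbifold surface, then use excision and the quasi-projective theory of Section~\ref{sec:twisted_mukai_structures} to reduce to producing a single algebraic class of rank equal to the period, which is then supplied by de Jong's theorem.

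\textbf{Step 1 (Rigidification).} By Example~\ref{ex:total_rig} there is a smooth, proper orbifold $Y$ of dimension two and a class $\alpha \in \Br(Y)$ with $\dperf(X) \simeq \dperf(Y,\alpha)$. Since the Voisin group is invariant under $\C$-linear equivalence, I replace $X$ by $(Y,\alpha)$ and from now on assume that the category of interest is $\dperf(Y,\alpha)$ with $Y$ a smooth proper orbifold surface.

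\textbf{Step 2 (Excision).} By Lemma~\ref{lem:bloch_kato} there is a dense open $U \subset Y$ on which $\alpha|_U$ is topologically trivial; after blowing up, I arrange that $D = Y \setminus U$ is a simple normal crossing Deligne--Mumford curve. Since $Y$ is smooth, $\dperf(Y,\alpha) = \dcoh(Y,\alpha)$, so the excision triangle of Lemma~\ref{lem:excision_sequence} produces an exact triangle in $\ktop$ relating $\dcoh(D,\alpha|_D)$, $\dperf(Y,\alpha)$, and $\dperf(U,\alpha|_U)$. A diagram chase, using that the restriction map to $U$ factors through the lowest-weight part and respects Hodge structures (the twisted analogue of Lemma~\ref{lem:lowest_weight_does_not_depend}), reduces the IHC for $\dperf(Y,\alpha)$ to the surjectivity of $\K_0 \to \Hdg(-,\Z)$ for each of $\dcoh(D,\alpha|_D)$ and $\dperf(U,\alpha|_U)$.

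\textbf{Step 3 (The boundary).} By Corollary~\ref{cor:non-abelian_gerbe_cover} and Tsen's theorem, the Brauer group of a smooth complex Deligne--Mumford curve is trivial, so the restriction of $\alpha$ to each irreducible component of $D$ vanishes. Since Brauer classes also vanish at the nodes, local trivializations patch to a global trivialization, and $\dcoh(D,\alpha|_D) \simeq \dcoh(D)$. The required surjectivity on this category is Lemma~\ref{lem:dm_curve}.

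\textbf{Step 4 (The interior).} By Theorem~\ref{thm:open_twisted_mukai}, the Hodge structure on $\ktop_0(\dperf(U,\alpha|_U))$ relevant here is the twisted Mukai structure $\wtop_0(U)^{B}$ for any rational $B$-field $B$ for $\alpha|_U$. By Lemma~\ref{lem:hodge_class_open_subvar}, every Hodge class has rank divisible by $n_0 := \per(\alpha|_U)$; the rank-zero Hodge classes lie in $\NS(U)$ by Lemma~\ref{lem:basic_h2_v2} and are realized algebraically by tensoring algebraic line bundles with any fixed $\alpha|_U$-twisted perfect complex. It therefore suffices to produce an algebraic Hodge class of rank exactly $n_0$. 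Since $\dim U = 2$, de Jong's theorem yields $\ind(\alpha|_U) = \per(\alpha|_U) = n_0$, and any $\alpha|_U$-twisted perfect complex attaining this index gives the desired algebraic class.

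The main obstacle is Step 2: one must verify that the excision triangle interacts correctly with the Hodge structures of Section~\ref{sec:hodge_theory_of_categories} and with the lowest-weight formalism of Section~\ref{ssec:quasi_proj} so that a diagram chase produces lifts preserving algebraicity, and in particular that this extends cleanly to the twisted setting on both $Y$ and $D$. With that established, the remainder of the proof is a direct application of Theorem~\ref{thm:open_twisted_mukai}, the computations of Section~\ref{ssec:computations_on_quasi_proj}, and de Jong's theorem.
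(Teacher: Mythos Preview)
Your overall strategy matches the paper's: rigidify to a twisted category on an orbifold surface, then use excision to separate a boundary curve (handled by Lemma~\ref{lem:dm_curve}) from an open surface where de Jong's theorem supplies the needed algebraic class. Two steps in your execution, however, do not go through as written.

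In Step~4, the claim that rank-zero Hodge classes in $\wtop_0(U)^B$ are ``realized algebraically by tensoring algebraic line bundles with any fixed $\alpha|_U$-twisted perfect complex'' is false. If $E$ has rank $n_0$ and $M$ is a line bundle, then $[E\otimes M]-[E]$ corresponds, under the isomorphism of Theorem~\ref{thm:open_twisted_mukai}, to a rank-zero class with $c_1 = n_0\cdot c_1(M)$, so this produces only $n_0\cdot\NS(U)$, not all of $\NS(U)$. The paper avoids the issue entirely: it first establishes (via weak factorization and Lemma~\ref{lem:ihc_curve_surjective}) that the integral Hodge conjecture for $\dperf^1(\scx)$ is a birational invariant of $\scx$, and then uses this freedom to shrink $U$ to an affine surface with $\NS(U)=0$ and to replace the compactification by the one coming from Lemma~\ref{lem:open_subvariety_gerbe}. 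With $\NS(U)=0$, Lemma~\ref{lem:affine_surface} identifies $\Hdg(\wtop_0(U)^B)$ with $\per(\alpha)\cdot\Z$ via the rank map, and a single twisted sheaf of rank $\per(\alpha)=\ind(\alpha)$ suffices.

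In Step~3, the assertion that local trivializations of $\alpha$ on the components of $D$ ``patch to a global trivialization'' is not justified: $D$ is only simple normal crossing, and vanishing of the Brauer class at each node does not by itself produce a global trivialization on a possibly non-simply-connected configuration. The paper never attempts to trivialize. It works throughout with a $\bmu_n$-gerbe $\scx\to X$ and applies excision inside $\scx$, so that the boundary term is $\dcoh(E)$ for $E\subset\scx$ an honest simple normal crossing Deligne--Mumford curve; Lemma~\ref{lem:dm_curve} then gives surjectivity of $\K_0\to\ktop_0$ on the nose. Relatedly, your phrasing in Step~2 is off: there is no Hodge structure on $\ktop_0(\dcoh(D,\alpha|_D))$, so what is needed on the boundary is not ``$\K_0\to\Hdg$'' but full surjectivity of $\K_0\to\ktop_0$, which is precisely what Lemma~\ref{lem:dm_curve} provides.
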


	The strategy of the proof is to reduce to the case of $\dperf(X, \alpha)$, for a smooth, proper orbifold $X$, and then to analyze the localization sequence arising from an affine surface contained in $X$. At the key step, we produce a Hodge class of rank $\per(\alpha)$ using de Jong's theorem \cite{dejong_period_index} that $\per(\alpha) = \ind(\alpha)$. We mention a corollary:

	\begin{corollary}	
	\label{cor:birational_invariant}
	    The truth of the integral Hodge conjecture for $\dperf(X)$ is a birational invariant for smooth, proper Deligne--Mumford threefolds.
	\end{corollary}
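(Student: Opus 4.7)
The plan is to reduce to Theorem~\ref{thm:ihc_for_dm_surfaces} and its lower-dimensional analogues via weak factorization and the additivity of the Voisin group across semiorthogonal decompositions.

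Let $X$ and $Y$ be birational smooth, proper Deligne--Mumford threefolds over $\C$, agreeing on a common open substack $U$. By weak factorization (Example~\ref{ex:weak_factorization}), it suffices to show that the validity of the integral Hodge conjecture for $\dperf(-)$ is preserved when $X$ is replaced by either (i) a root stack $\pi\colon X_{r^{-1}E}\to X$ along a smooth divisor $E\subset X\setminus U$, or (ii) a blowup $\pi\colon \Bl_Z X\to X$ along a smooth closed Deligne--Mumford substack $Z\subset X\setminus U$ of codimension $\geq 2$.

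In case (i), the semiorthogonal decomposition \eqref{eq:root_stack_sod_dec6} reads
\[
\dperf(X_{r^{-1}E})=\langle \scc_{r-1},\dots,\scc_1,\pi^*\dperf(X)\rangle,
\]
with each $\scc_j\simeq\dperf(E)$; since $E$ is a smooth, proper Deligne--Mumford surface, Theorem~\ref{thm:ihc_for_dm_surfaces} gives the integral Hodge conjecture for each $\scc_j$. In case (ii), the center $Z$ has dimension $0$ or $1$, and Orlov's blowup formula, extended to the Deligne--Mumford setting, yields a semiorthogonal decomposition
\[
\dperf(\Bl_Z X)=\langle\pi^*\dperf(X),\dperf(Z),\dots,\dperf(Z)\rangle,
\]
with $\codim(Z)-1$ copies of $\dperf(Z)$. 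The integral Hodge conjecture for $\dperf(Z)$ holds by Lemma~\ref{lem:ihc_curve_surjective} when $\dim Z=1$, and trivially when $\dim Z=0$. Since the Voisin group decomposes as a direct sum across a semiorthogonal decomposition, the integral Hodge conjecture for the left-hand sides is in each case equivalent to that for $\dperf(X)$.

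The step requiring the most care is pinning down a blowup semiorthogonal decomposition with the correct hypotheses (smooth centers of arbitrary codimension inside smooth Deligne--Mumford stacks); once this is in hand, the corollary follows formally by traversing the weak factorization chain of Example~\ref{ex:weak_factorization} and applying the two cases above at each link.
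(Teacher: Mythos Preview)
Your argument is correct and follows the same approach as the paper, which simply writes ``Combine Theorem~\ref{thm:ihc_for_dm_surfaces} with weak factorization (Example~\ref{ex:weak_factorization}).'' You have spelled out exactly what that sentence means: traverse the weak factorization chain, and at each step use the semiorthogonal decompositions for root stacks \eqref{eq:root_stack_sod_dec6} and blowups, observing that the new pieces are $\dperf$ of Deligne--Mumford stacks of dimension $\leq 2$, for which the integral Hodge conjecture holds by Theorem~\ref{thm:ihc_for_dm_surfaces} and Lemma~\ref{lem:ihc_curve_surjective}.
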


	\begin{proof}
	    Combine Theorem~\ref{thm:ihc_for_dm_surfaces} with weak factorization (Example~\ref{ex:weak_factorization}).
	\end{proof}

	We now begin the proof of Theorem~\ref{thm:ihc_for_dm_surfaces}.

	\step{1} By Example~\ref{ex:total_rig}, it suffices to prove the integral Hodge conjecture for $\dperf^1(\scx)$, where $\scx \to X$ is a $\bmu_n$-gerbe over a smooth, proper orbifold.

	\step{2} Since the integral Hodge conjecture holds for smooth Deligne--Mumford stacks of dimension at most $1$ by Lemma~\ref{lem:ihc_curve_surjective}, weak factorization (Example~\ref{ex:weak_factorization}) implies that the integral Hodge conjecture for $\dperf^1(\scx)$ is a birational invariant of $\scx$.

	\step{3} Let $U \subset X$ be an open substack equivalent to an affine surface. Possibly shrinking $U$, we may assume that $\NS(U) = 0$. We observe that since $U$ is an affine surface, the restriction $\scu \to U$ of $\scx$ over $U$ is essentially topologically trivial.

	From the previous step, we may replace $\scx$ by any other smooth compactification of $\scu$. In particular, following the proof of Lemma~\ref{lem:open_subvariety_gerbe}, we adopt the following setup: $U \to X_0$ is the inclusion of $U$ into a smooth, projective variety such that $D = X_0 - U$ has simple normal crossings; $X \to X_0$ is an iterated root stack over the components of $D$; $\scx \to X$ is a $\bmu_n$-gerbe extending $\scu$; and $\P \to X$ is a Severi--Brauer variety representing the Brauer class of $\scx$. 

	\step{4} There is a commutative diagram
	\[
		\begin{tikzcd}
			\Hdg(\dperf(\P)_1, \Z) \ar[d, "\sim"] \ar[r, "f"] & \Hdg(\wtop_0(\dperf(\P_U)_1) \ar[r] & \ktop_0(\dperf(\P_U)_1) \ar[d, "\sim"] \\
			\Hdg(\dperf^1(\scx), \Z) \ar[rr, "g"] & & \ktop_0(\dperf^1(\scu)),
		\end{tikzcd}
	\]
	where the vertical isomorphisms come from Lemma~\ref{lem:weighted_lb_lemma}, after choosing $\sco_{\P_{\scx}}(1) \in \Pic^{-1}(\P_{\scx})_{1}$. 

	The key claim is that any fiber of $g$ contains an algebraic class. To prove the claim, it suffices to prove the corresponding statement for $f$. Observe that by Lemma~\ref{lem:affine_surface} and the assumption that $\NS(U) = 0$, the rank map gives an isomorphism
	\[
		\Hdg(\wtop_0(\dperf(\P_U)_1) \simeq \per(\alpha) \cdot \Z,
	\]
	where $\alpha$ is the Brauer class of $\scx$. Therefore, it suffices to show that there is an algebraic class of rank $\per(\alpha)$ in $\Hdg(\dperf^1(\scx), \Z)$. This follows from de Jong's theorem \cite{dejong_period_index} that $\per(\alpha) = \ind(\alpha)$.

	\step{5} Let $E$ be the preimage of $D$ under the map $\scx \to X_0$. From the localization sequence (Lemma~\ref{lem:excision_sequence}), we may consider a diagram with exact rows
	\[
		\begin{tikzcd}
			K \ar[r] \ar[d] & \Hdg(\dperf^1(\scx), \Z) \ar[d] \ar[r] & \ktop_0(\dperf^1(\scu)) \ar[d, equals] \\
			\ktop_0(\dcoh(E)) \ar[r] & \ktop_0(\dperf^1(\scx)) \ar[r] & \ktop_0(\dperf^1(\scu)),
		\end{tikzcd}
	\]
	where the vertical maps are inclusions. Our goal is to show that any $v \in \Hdg(\dperf^1(\scx), \Z)$ is algebraic. From the result of the previous step, we may suppose that $v$ lies in the image of $K$. Then we apply Lemma~\ref{lem:dm_curve}, which asserts that the map $\K_0(\dcoh(E)) \to \ktop_0(\dcoh(E))$ is surjective. \qed

	\bibliography{mybib}
	\bibliographystyle{amsalpha}

\end{document}